\newtheorem{theo}{Theorem}[section]
\newtheorem{lem}[theo]{Lemma}
\newtheorem{prop}[theo]{Proposition}
\newtheorem{cor}[theo]{Corollary}
\newtheorem{defi}[theo]{Definition}
\newcommand{\mysection}[1]{\section{#1} \setcounter{equation}{0}}
\newcommand{\proofc}{{\sc Proof} \ }
\newcommand{\be}{\begin{equation} \label}
\newcommand{\ee}{\end{equation}}
\newcommand{\bea}{\begin{eqnarray}\label}
\newcommand{\eea}{\end{eqnarray}}
\newcommand{\bas}{\begin{eqnarray*}}
\newcommand{\eas}{\end{eqnarray*}}
\newcommand{\bit}{\begin{itemize}}
\newcommand{\eit}{\end{itemize}}
\renewcommand{\qed}{\hfill$\Box$ \vskip.2cm}
\newcommand{\nn}{\nonumber}
\newcommand{\R}{\mathbb{R}}
\newcommand{\N}{\mathbb{N}}
\newcommand{\eps}{\varepsilon}
\newcommand{\supp}{{\rm supp} \, }
\newcommand{\io}{\int_\Omega}
\newcommand{\abs}{\\[5pt]}
\newcommand{\tv}{\widetilde{v}}
\newcommand{\ov}{\overline{v}}
\newcommand{\uv}{\underline{v}}
\newcommand{\uz}{\underline{z}}
\newcommand{\ir}{\int_{\R^n}}
\newcommand{\irn}{\int_{\R^n}}
\newcommand{\sst}{s_\star}
\newcommand{\ssst}{s_{\star\star}}
\newcommand{\E}{{\mathcal{E}}}
\newcommand{\uu}{\underline{u}}
\newcommand{\uo}{\overline{u}}
\newcommand{\phii}{\varphi}
\newcommand{\na}{\nabla}
\newcommand{\norm}[2][]{\left\|#2\right\|_{#1}}
\newcommand{\calL}{\mathcal{L}}
\newcommand{\set}[1]{\left\{#1\right\}}
\newcommand{\Bbar}{\overline{B}}
\newcommand{\Rst}{R_{\star}}
\newcommand{\Ctilde}{\widetilde{C}}
\newcommand{\betatilde}{B} %beta im Nenner sah mit \tilde unleserlich aus
\newcommand{\alphatilde}{A}
\begin{document}
\enlargethispage{10mm}
\title{Counterintuitive dependence of temporal asymptotics on initial decay 
in a nonlocal degenerate parabolic equation arising in game theory}
% Oder: \title{Counterintuitive implications of initial decay on temporal asymptotics 
% in a nonlocal degenerate parabolic equation arising in game theory} ???
%
%
%
\author{
Johannes Lankeit\footnote{jlankeit@math.uni-paderborn.de}\\
{\small Institut f\"ur Mathematik, Universit\"at Paderborn,}\\
{\small 33098 Paderborn, Germany} 
\and
Michael Winkler\footnote{michael.winkler@math.uni-paderborn.de}\\
{\small Institut f\"ur Mathematik, Universit\"at Paderborn,}\\
{\small 33098 Paderborn, Germany} }
%
%
%\date{} %Der Befehl kann am Ende gerne wieder rein, vorerst ist es f\"ur mich sehr hilfreich zu sehen, wie aktuell die Version jeweils ist, die ich gerade vor mir habe.
\maketitle
\begin{abstract}
\noindent 
  We consider the degenerate parabolic equation with nonlocal source given by
  \bas
	u_t=u\Delta u +  u \io |\nabla u|^2,
  \eas
which has been proposed as model for the evolution of the density distribution of frequencies with 
which different strategies are pursued in a population obeying the rules of replicator dynamics in a continuous infinite-dimensional setting.\abs
  Firstly, for all positive initial data 
  $u_0\in C^0(\R^n)$ satisfying $u_0\in L^p(\R^n)$ for some $p\in (0,1)$
  as well as $\ir u_0=1$, the corresponding Cauchy problem in $\R^n$ is seen to possess a global positive classical solution 
  with the property that $\ir u(\cdot,t)=1$ for all $t>0$. \abs
  Thereafter, the main purpose of this work consists in reavealing 
  a dependence of the large time behavior of these solutions
  on the spatial decay of the initial data in a direction that seems unexpected
  when viewed against the background of known behavior in large classes of scalar parabolic problems.
  In fact, it is shown that all considered solutions asymptotically decay with respect to their spatial $H^1$ norm, so that
  \bas
	\E(t):=\int_0^t \ir |\nabla u(\cdot,t)|^2, \qquad t>0,
  \eas
  always grows in a significantly sublinear manner in that
  \be{a1}
	\frac{\E(t)}{t} \to 0
	\qquad \mbox{as } t\to\infty;
  \ee
  the precise growth rate of $\E$, however, depends on the initial data in such a way that fast decay rates of $u_0$
  enforce rapid growth of $\E$. 
  To this end, examples of algebraical and certain exponential types of initial decay are detailed, inter alia
  generating logarithmic and arbitrary sublinear algebraic growth rates of $\E$, and moreover indicating
  that (\ref{a1}) is essentially optimal.\abs
\noindent {\bf Key words:} degenerate diffusion; nonlocal source; decay rate\\
  {\bf Math Subject Classification (2010):} 35B40 (primary), 35K55, 35K65, 91A22 (secondary)
\end{abstract}
%  35B40 View Publications (1973-now) Asymptotic behavior of solutions 
%  35K65 View Publications (1980-now) Degenerate parabolic equations 
%  35K55 View Publications (1973-now) Nonlinear parabolic equations 
%  35A01 View Publications (2010-now) Existence problems: global existence, local existence, non-existence 
%  91A22 View Publications (2000-now) Evolutionary games 
%
%
%
%
\newpage
\section{Introduction}\label{intro}
For $n\ge 1$ and given positive initial data $u_0\in C^0(\R^n)$, we consider positive solutions of the Cauchy problem 
\be{0}
	\left\{ \begin{array}{ll}
	u_t=u\Delta u + u\ir |\nabla u|^2, \qquad & x\in\R^n, \ t>0, \\[1mm]
	u(x,0)=u_0(x), & x\in\R^n,
	\end{array} \right.
\ee
which has been proposed in the context of evolutionary game theory as a model for strategies which are pursued within a population and the distribution of which evolves in accordance with the rules of replicator dynamics. In this model, the continuum of possible strategies is given by the domain $\R^n$ and $u$ plays the role of the density of the relative frequency with which the strategies are followed. 
The analogue for a setting involving finitely many strategies goes back to the works of Taylor and Jonker \cite{taylor_jonker} and Maynard Smith \cite{maynard_smith}, and infinite-dimensional variants have been treated in \cite{bomze} and \cite{oechssler_riedel}. Building on these, for steep payoff kernels of Gaussian type, in \cite{KPY08,KPXY10} %genauer: KPY08 hat die Gleichung zuerst eingeführt (PS09 betrachtet sie übrigens auch); eine über ``da sieht man leichter was als in der ganz allgemeinen Form'' hinausgehende Begründung, wo diese Form interessant sein könnte, gibt's erst in KPXY10
the PDE \eqref{0} was introduced. For additional details concerning the modeling background, we refer to \cite[Appendix A]{klw}, \cite{KPXY10}, and the references therein.\abs
Mathematically, (\ref{0}) can be viewed as joining two mechanisms which are quite delicate even when regarded separately:
Firstly, the PDE therein contains a reaction term which is superlinear, and hence potentially destabilizing
in the extreme sense of possibly enforcing blow-up, and which is moreover nonlocal and thereby may destroy 
any ordering property, as constituting a well-appreciated feature of parabolic equations with exclusively local terms
(cf.~the book \cite{quittner_souplet} for a large variety of aspects related to this). 
Secondly, the diffusion process in (\ref{0}) degenerates near points where $u$ is small, and it is indicated by the
analysis of the simple diffusion equation 
\be{up}
	u_t=u^p\Delta u
\ee 
that with regard to this degeneracy, (\ref{0}) is
precisely critical: Namely, whereas in the case $p\in (0,1)$ the equation
(\ref{up}) actually reduces to a porous medium equation with its well-developed 
theory on existence, uniqueness and (H\"older) regularity of weak solutions (\cite{aronson}), 
it is known that stronger degeneracies in (\ref{up}) may bring about much more irregular and unexpected solution
behavior (see e.g.~\cite{win_osc} or \cite{win_homoclinic} for two recent examples); 
the particular borderline situation of the exponent $p=1$ is indicated e.g.~by the classical
results from \cite{bertsch} and \cite{luckhaus_dalpasso}) which assert that precisely 
for $p\ge 1$, weak solutions to (\ref{up})
can spontaneously develop disconinuities and in general are not unique.\abs
Accordingly, previous studies on the PDE in (\ref{0}) 
either concentrate on the construction of particular
radially symmetric self-similar solutions of self-similar structure, thus actually concerned with a corresponding
ODE analysis (\cite{KPY08}), or resort to appropriate generalized solution frameworks, as recently done
in \cite{klw} for the homogeneous Dirichlet problem associated with (\ref{0}) in bounded domains $\Omega\subset\R^n$.
Beyond a corresponding local existence statement, the latter work moreover provides some rigorous evidence for the 
mass evolution property
\be{evol_mass}
	\frac{d}{dt} \io u = \bigg\{ \io |\nabla u|^2 \bigg\} \cdot \bigg\{ \io u - 1 \bigg\},
\ee
formally satisfied by solutions to the considered problem, in the sense of the implication that if
the respective initial data satisfy $\io u_0 < 1$, a global weak solution $u$ can be found which is such that
$\io u(\cdot,t)< 1$ for all $t>0$ and such that $u(\cdot,t)\to 0$ as $t\to\infty$ in an appropriate sense, 
while whenever $\io u_0>1$, a solution can be constructed which blows up in finite time $T$ with respect to its spatial
$L^\infty$ norm, and for which we have $\io u(\cdot,t)>1$ for all $t\in (0,T)$.
For this Dirichlet problem, an essentially complete understanding has also been achieved in 
the critical case when $\io u_0=1$:	%, evidently being of particular interest since 
%in the present modeling context the unknown $u$ represents a probability density: 
For such initial data, namely,
the initial-boundary value problem in question possesses a global generalized solution which satisfies
$\io u(\cdot,t)=1$ for all $t>0$ and furthermore stabilizes toward the solution $\varphi$ of the Dirichlet problem
for $-\Delta \varphi=1$ in $\Omega$ in the large time limit (\cite{lankeit}).\abs
{\bf Global existence of classical solutions.}\quad
Bearing in mind that in the present modeling context the unknown $u$ represents a probability density, 
in this work we shall focus on the analogue of the latter unit-mass situation in the Cauchy problem (\ref{0}),
thus concentrating on solutions satisfying the additional condition
\be{unit}
	\ir u(\cdot,t)=1
\ee
throughout evolution. 
Since this in particular requires solutions to attain small values in large spatial regions, even the mere construction
of solutions seems to be nontrivial and not to be achievable through a straightforward approximation
by solutions to homogeneous Dirichlet problems in balls with increasing size, for instance; 
in fact, it seems unclear whether such approximate solutions enjoy suitable compactness properties which are sufficient
to ensure convergence to a solution of (\ref{0}) additionally satisfying (\ref{unit}), rather than e.g.~a 
corresponding inequality only.\abs
We first address this problem of solvability, and it will turn out that despite these obstacles, for a large class of
initial data a global solution can actually be found in the classical framework specified as follows.
\begin{defi}\label{defi11}
  Let $u_0\in C^0(\R^n)$ be positive. Then by a {\em positive classical solution} of (\ref{0}) in $\R^n\times (0,\infty)$
  we mean a positive function
  \bas
	u\in C^0(\R^n\times [0,\infty)) \cap C^{2,1}(\R^n\times (0,\infty))
  \eas
  which is such that $\nabla u(\cdot,t)$ belongs to $L^2(\R^n)$ for all $t>0$, that
  \bas
	(0,\infty) \ni t \mapsto \ir |\nabla u(\cdot,t)|^2
	\quad \mbox{is continuous,}
  \eas
  and which satisfies both identities in (\ref{0}) in the pointwise sense.
\end{defi}
Within this setting, the first of our main results asserts global solvability under mild assumptions
on the spatial decay of $u_0$.
\begin{theo}\label{theo10}
  Assume that $u_0 \in C^0(\R^n)$ is positive and satisfies
  \be{10.1}
	u_0\in L^p(\R^n) 
	\qquad \mbox{for some } p\in (0,1)
  \ee
  as well as 
  \be{10.2}
	u_0(x)\to 0
	\qquad \mbox{as } |x|\to\infty
  \ee
  and
  \be{10.3}
	\ir u_0	=1.
  \ee
  Then the problem (\ref{0}) possesses at least one positive classical solution $u$ 
  in $\R^n\times (0,\infty)$ fulfilling
  \be{mass_u}
	\ir u(\cdot,t)=1
	\qquad \mbox{for all } t>0.
  \ee
\end{theo}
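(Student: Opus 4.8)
The plan is to remove the nonlocal coupling by a time-dependent rescaling that turns (\ref{0}) into the purely local degenerate equation $v_s=v\Delta v$, and to build the desired solution from a solution of this reduced problem. Concretely, I would first construct a positive solution $v$ of the Cauchy problem
\[ v_s=v\Delta v \ \text{ in } \ \R^n\times(0,\infty), \qquad v(\cdot,0)=u_0, \]
set $m(s):=\ir v(\cdot,s)$, let $\sigma=\sigma(t)$ be defined through $\int_0^{\sigma(t)}m(s)\,ds=t$, and finally put
\[ u(x,t):=\frac{1}{m(\sigma(t))}\,v\big(x,\sigma(t)\big). \]
A direct computation shows that $u$ solves (\ref{0}) precisely when $\sigma'=1/m(\sigma)$ and the factor $c:=1/m(\sigma)$ obeys $c'=c^3\ir|\na v(\cdot,\sigma)|^2$, and that these two requirements are automatically compatible with the choice $c=1/m(\sigma)$; moreover $\ir u(\cdot,t)=c(t)\,m(\sigma(t))=1$ holds by construction, so that (\ref{mass_u}) comes for free. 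In this way the nonlocal source and the unit-mass constraint -- the two features singled out as delicate in the introduction -- are both resolved structurally, and matters are reduced to two tasks: constructing $v$, and showing that the time change $\sigma$ exhausts all of $(0,\infty)$.

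For the construction of $v$ I would approximate the reduced equation on balls $B_R$ (with boundary data inherited from $u_0$) together with a mild regularization removing the degeneracy, where the problems are uniformly parabolic and classically solvable. The two a priori bounds driving the limit $R\to\infty$ are both clean on the $v$-side: a maximum-principle argument gives $\|v(\cdot,s)\|_{L^\infty(\R^n)}\le\|u_0\|_{L^\infty(\R^n)}$, while testing $v_s=v\Delta v$ against $v^{p-1}$ yields the dissipation identity
\[ \frac{d}{ds}\ir v^p=-p^2\ir v^{p-1}|\na v|^2\le 0, \]
so that $\ir v^p(\cdot,s)\le\ir u_0^p$ for all $s>0$; this is exactly where hypothesis (\ref{10.1}) with $p\in(0,1)$ enters. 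Since $v^p$ weights small values more heavily than $v$ itself when $p<1$, this uniform $L^p$ bound keeps the spatial tails of $v$ at least as thin as those of $u_0$; together with interior parabolic regularity it produces a positive classical limit $v$ on $\R^n\times(0,\infty)$ and justifies the integral identities used below (in particular $m'(s)=-\ir|\na v|^2$).

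The main obstacle is the second task: verifying that $\sigma$ maps $(0,\infty)$ onto $(0,\infty)$, equivalently that $\int_0^\infty m(s)\,ds=\infty$. Here $m$ is non-increasing with $m'(s)=-\ir|\na v|^2$, and since $\frac{d}{ds}\ir|\na v|^2=-2\ir v(\Delta v)^2\le0$ the dissipation $\ir|\na v|^2$ is itself non-increasing, whence $\int_0^\infty\ir|\na v|^2\le m(0)=1$ and $m(s)=m_\infty+\int_s^\infty\ir|\na v|^2$. If $m_\infty>0$ the claim is immediate; the delicate case is $m_\infty=0$, in which $\int_0^\infty m=\int_0^\infty s\,\ir|\na v(\cdot,s)|^2\,ds$ and one must rule out too rapid a decay of the dissipation. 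This is the crux, and it is genuinely nontrivial, because $\ir|\na v|^2$ is a higher-order quantity that cannot be dominated by $\|v\|_{L^1}$ and $\ir v^p$ alone. The plan is to extract a quantitative lower bound $m(s)\ge C(1+s)^{-\beta}$ with $\beta<1$ from the $L^p$-dissipation: scaling of $v_s=v\Delta v$ under the $L^p$-preserving dilation singles out the borderline rate $\beta=\frac{n(1-p)}{n+2p}$, which indeed satisfies $\beta<1$ for every $p\in(0,1)$, so that $\int_0^\infty m=\infty$ follows once the associated differential inequality for $m$ is secured (exploiting the regularity of $v$ to control the gradient term rather than a naive interpolation).

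Finally I would transfer the established properties of $v$ back to $u$: continuity up to $t=0$ follows from $\sigma(0)=0$ and $c(0)=1$, the $C^{2,1}$-regularity and positivity for $t>0$ are inherited from $v$ together with smoothness of $\sigma$ and $c$, and $\na u(\cdot,t)=c(t)\na v(\cdot,\sigma(t))\in L^2(\R^n)$ with $t\mapsto\ir|\na u(\cdot,t)|^2=c(t)^2\ir|\na v(\cdot,\sigma(t))|^2$ continuous, so that $u$ qualifies as a positive classical solution in the sense of Definition \ref{defi11} fulfilling (\ref{mass_u}).
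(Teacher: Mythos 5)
Your reduction is exactly the paper's: with $m(s)=\int_{\R^n}v(\cdot,s)$, your time change $\sigma$ and factor $c=1/m(\sigma)$ coincide with the paper's $h=H^{-1}$ and $g=G\circ h$ from Definition \ref{def:fcts}, your compatibility computation is the one in Lemma \ref{lem9}, and the unit-mass identity indeed comes for free once the mass identity $m(s)=1-\int_0^s\int_{\R^n}|\nabla v|^2$ (Lemma \ref{lem5} with $p=1$) is available. The construction of $v$ via regularized Dirichlet problems on balls also matches Section \ref{sect_v}. The gap lies precisely in the step you yourself identify as the crux, namely showing $\int_0^\infty m(s)\,ds=\infty$. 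The bound you propose there, $m(s)\ge C(1+s)^{-\beta}$ with $\beta=\frac{n(1-p)}{n+2p}<1$, is false in general. That exponent is the rate of the \emph{upper} estimate $\|v(\cdot,s)\|_{L^1(\R^n)}\le Cs^{-(1-p)/(1+\frac{2p}{n})}$ valid for $v_0\in L^p(\R^n)$ (Lemma \ref{lem:v_estimate}~ii)), and no hypothesis of the form $u_0\in L^p$ can yield a lower bound of any power type, because $L^p$-integrability of the tails is compatible with arbitrarily fast spatial decay of $u_0$. Concretely, take $u_0$ a normalized Gaussian, or any of the radially symmetric data with exponential or doubly exponential decay treated in Corollaries \ref{cor:firstexample}~ii) and \ref{cor:secondexample}: these satisfy (\ref{10.1})--(\ref{10.3}) for every $p\in(0,1)$, yet by Lemma \ref{lem:rapiddecr_l1} the corresponding $v$ obeys $m(s)\le Cs^{-1}\ell^{-\frac{n+2}{n}}\big(\frac1s\big)$, where $\ell^{-\frac{n+2}{n}}\big(\frac1s\big)$ grows only (iterated-)logarithmically in $s$. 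Hence $m(s)=o(s^{-\beta})$ as $s\to\infty$ for \emph{every} $\beta<1$, and your claimed inequality fails for all large $s$ no matter which $p$ you start from; the scaling heuristic detects the worst-case decay, not a best-case one.

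What is true -- and is what the paper uses instead (Lemma \ref{FWProp1_3}, quoted from the literature, fed into Lemma \ref{lem:h}) -- is the borderline pointwise statement $\inf_{x\in B_1}\{s\,v(x,s)\}\to\infty$ as $s\to\infty$, valid for every positive global classical solution of $v_s=v\Delta v$ with no decay hypothesis whatsoever. This gives $m(s)\ge\int_{B_1}v(\cdot,s)\ge\frac1s$ for all large $s$, hence $\int_0^\infty m=\infty$ by divergence of the harmonic integral. Note that this is genuinely sharp: by the examples above, $m$ can decay like $s^{-1}$ up to iterated logarithms, so no argument producing a power gap $\beta<1$ could possibly be correct, and any proof of surjectivity of the time change must work at this borderline. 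If you replace your quantitative step by this lemma (which also makes your dichotomy $m_\infty>0$ versus $m_\infty=0$ unnecessary), the remainder of your outline goes through essentially as in Sections \ref{sect_v} and \ref{sect3} of the paper, modulo the technical work needed to justify the identities $m'=-\int_{\R^n}|\nabla v|^2$ and the continuity of $t\mapsto\int_{\R^n}|\nabla u(\cdot,t)|^2$ required by Definition \ref{defi11} (Lemmas \ref{lem5}, \ref{lem7} and \ref{lem8}).
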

{\bf Large time behavior.} \quad
Our next objective consists in deriving some information on the large time behavior of solutions, where it will turn
out that a convenient description thereof can be given in terms of the cumulated free energy functional $\E$ given by
\be{def_E}
	\E(t) := \int_0^t \ir |\nabla u|^2, \qquad t>0;
\ee
indeed, from (\ref{0}) it may be expected that suitably smooth positive solutions with appropriate spatial decay satisfy
the identity
\be{energy}
	\ir \frac{u_t^2}{u} 
	+ \frac{1}{2} \frac{d}{dt} \ir |\nabla u|^2
	= \bigg\{ \ir |\nabla u|^2 \bigg\} \cdot \bigg\{ \frac{d}{dt} \ir u \bigg\},
\ee
whence the Dirichlet integral $\ir |\nabla u|^2$ should in fact play the role of a Lyapunov functional on the set
of solutions to (\ref{0}) with conserved unit mass.
A rigorous partial verification of this is provided by the following proposition which, beyond a correspondingly
expected monotonicty property, asserts decay of this free energy and thereby states a first
asymptotic feature of $\E$ which holds without imposing further restrictions on the initial data
apart from those in Theorem \ref{theo10}.
\begin{prop}\label{prop103}
  Let $u_0\in C^0(\R^n)$ be positive and such that (\ref{10.1})-(\ref{10.3}) hold,
  and let $u$ denote the corresponding positive classical solution of (\ref{0}) obtained in Theorem \ref{theo10}.
  Then $(0,\infty)\ni t\mapsto \io |\nabla u(\cdot,t)|^2$ is nonincreasing with
  \be{103.2}
	\ir |\nabla u(\cdot,t)|^2 \to 0
	\qquad \mbox{as } t\to\infty.
  \ee
  In particular, with $\E$ as in (\ref{def_E}) we have
  \be{103.3}
	\frac{\E(t)}{t} \to 0
	\qquad \mbox{as } t\to\infty.
  \ee
\end{prop}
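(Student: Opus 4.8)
The plan is to base everything on the energy identity (\ref{energy}), which in the present unit-mass situation degenerates into a clean dissipation law. First I would establish (\ref{energy}) rigorously for the solution from Theorem \ref{theo10}. Formally this is a two-step computation: differentiating under the integral sign gives $\frac{d}{dt}\ir|\nabla u|^2 = 2\ir\nabla u\cdot\nabla u_t$, an integration by parts (with no boundary contribution at spatial infinity) turns this into $-2\ir u_t\Delta u$, and substituting $\Delta u = \frac{u_t}{u} - \ir|\nabla u|^2$ from the PDE yields
\[
\tfrac12\tfrac{d}{dt}\ir|\nabla u|^2 = -\ir\frac{u_t^2}{u} + \Big(\ir|\nabla u|^2\Big)\tfrac{d}{dt}\ir u ,
\]
which is (\ref{energy}). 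The justification of the differentiation under the integral, of the vanishing of the boundary terms, and of the identity $\ir u_t = \frac{d}{dt}\ir u$ is the only technical point here, and I would carry it out using the regularity in Definition \ref{defi11} together with the spatial decay and integrability bounds produced in the construction underlying Theorem \ref{theo10}. Since that theorem guarantees $\ir u(\cdot,t)=1$ for all $t>0$, we have $\frac{d}{dt}\ir u\equiv 0$, so the right-hand side of (\ref{energy}) vanishes and we are left with the pure dissipation identity $\frac{d}{dt}\ir|\nabla u|^2 = -2\ir\frac{u_t^2}{u}$.

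Monotonicity of $t\mapsto\ir|\nabla u(\cdot,t)|^2$ is then immediate, since the right-hand side is nonpositive. Writing $D(t):=\ir|\nabla u(\cdot,t)|^2$, this monotone nonnegative function has a limit $L\ge 0$, and integrating the dissipation identity shows in addition that $\int_\tau^\infty\ir\frac{u_t^2}{u}\,dt = \tfrac12(D(\tau)-L)<\infty$ for each $\tau>0$; in particular the dissipation is integrable in time.

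The substantial part is to prove $L=0$, i.e.\ (\ref{103.2}). Here I would argue by contradiction, assuming $L>0$. The time-integrability of the dissipation furnishes a sequence $t_k\to\infty$ along which $\ir\frac{u_t^2}{u}(\cdot,t_k)\to 0$. Using the algebraic identity $\ir\frac{u_t^2}{u} = \ir u(\Delta u)^2 - D^2$, obtained by expanding $u_t = u(\Delta u + D)$ and using $\ir u=1$ and $\ir u\Delta u = -D$, this forces $\ir u(\Delta u + L)^2(\cdot,t_k)\to 0$, so that $u(\cdot,t_k)$ looks, where it is not small, like a solution of the stationary problem $-\Delta w = L$. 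The goal is to extract from $(u(\cdot,t_k))_k$ a limit that genuinely solves $-\Delta w = L>0$ with $w\ge 0$ and unit mass on all of $\R^n$, which is impossible, since a nonnegative distributional solution of $-\Delta w = L$ grows at least quadratically and hence cannot be integrable. This nonexistence of a positive finite-mass equilibrium is exactly the mechanism behind the asserted decay.

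I expect the extraction of such a limit to be the main obstacle: on the unbounded domain $\R^n$ there is a priori neither compactness nor tightness, so mass and gradient energy could in principle escape to spatial infinity, and a naive weak limit might be $0$ (which would be consistent with $D\to 0$ but not with the assumed $L>0$). Making the contradiction rigorous therefore requires quantitative, uniform-in-time control — for instance local $C^2$ bounds together with information on where the mass concentrates — which I would draw from the a priori estimates in the proof of Theorem \ref{theo10}. Finally, (\ref{103.3}) follows from (\ref{103.2}) by a Cesàro argument: splitting $\frac{\E(t)}{t}=\frac1t\int_0^t D(s)\,ds$ at an intermediate time shows $\limsup_{t\to\infty}\frac{\E(t)}{t}\le\limsup_{s\to\infty}D(s)=0$; conversely, the monotonicity of $D$ gives $D(t)\le\frac2t\int_{t/2}^t D\le \frac{2\E(t)}{t}$, so that (\ref{103.2}) and (\ref{103.3}) are in fact equivalent for these solutions.
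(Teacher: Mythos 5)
Your Cesàro reduction of (\ref{103.3}) to (\ref{103.2}) is fine and matches the paper, but there is a genuine gap in your proof of (\ref{103.2}) itself. Your contradiction argument requires extracting from $(u(\cdot,t_k))_k$ a nontrivial nonnegative limit of unit mass solving $-\Delta w=L>0$, and you yourself flag this extraction as ``the main obstacle,'' deferring it to ``a priori estimates in the proof of Theorem \ref{theo10}.'' No such estimates are available there: the construction behind Theorem \ref{theo10} yields no uniform-in-time tightness and nothing that prevents the mass from spreading to spatial infinity, so the limit along $t_k$ may perfectly well be $0$, in which case no contradiction arises. Ruling out this scenario amounts to showing that persistent Dirichlet energy forces local growth of $u$ --- and that is exactly the idea your proposal is missing, and exactly what the paper's proof does, without any compactness. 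The paper argues: if $\ir|\nabla u(\cdot,t)|^2\ge c_1>0$ for all $t$, then $u_t\ge u\Delta u+c_1u$, and comparison (\cite{wiegner}) with the explicit subsolution $\uu(x,t):=c_2e^{c_1t/2}\varphi_R(x)$, where $\varphi_R(x)=\frac{R^2-|x|^2}{2n}$ is the torsion function from Lemma \ref{lem:w}, $R$ is fixed so large that $\frac{c_1}{2}\int_{B_R}\varphi_R>1$ (see (\ref{103.4})), and $c_2>0$ is small enough that $u_0>c_2\varphi_R$ in $\bar B_R$, shows that at the finite time $t_0=\frac{2}{c_1}\ln\frac{c_1}{2c_2}$ one has $\ir u(\cdot,t_0)\ge c_2e^{c_1t_0/2}\int_{B_R}\varphi_R=\frac{c_1}{2}\int_{B_R}\varphi_R>1$, contradicting the unit-mass identity (\ref{mass_u}). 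Without this comparison idea (or a genuine tightness/rigidity argument, which you do not supply), your proof of (\ref{103.2}) does not close.

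A secondary point concerns the monotonicity. You propose to establish the full energy identity (\ref{energy}) and call its justification ``the only technical point.'' The paper deliberately never proves (\ref{energy}): it establishes only the inequality $\ir|\nabla u(\cdot,T)|^2\le\ir|\nabla u(\cdot,t_0)|^2$ (Lemma \ref{lem102}), via a cut-off computation in which the nonlocal term is controlled by the functional inequality $\left\{\ir|\nabla\varphi|^2\right\}^2\le\ir\varphi|\Delta\varphi|^2$ for nonnegative $\varphi$ of at most unit mass (Lemma \ref{lem101}), the limits $R\to\infty$ being justified by the integrability properties (\ref{9.2}) and (\ref{9.3}) from Lemma \ref{lem9}. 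Your sketch points to the right ingredients, and indeed only the inequality is needed for the monotonicity claim; unlike the compactness issue above, this half of your proposal could be completed along the paper's lines, though as written it too remains an outline rather than a proof.
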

As already suggested by well-known properties of the mere heat equation,
but also confirmed in numerous cases of Cauchy problems for nonlinear diffusion equations 
(see \cite{vazquez_book} and also \cite{fvwy} and the references therein for some recent developments) and 
nonlinearly forced semilinear parabolic equations (\cite{lee_ni}, \cite{fkwy_linear}, \cite{quittner_souplet}),
more detailed asymptotic properties of solutions may depend on the spatial decay of the initial data in quite a colorful
manner.
The most commonly found flavor of	%types of 		%semi-quantitative 
results in this direction is that in the respective problem	%detect	%, in a more or less detailed manner, 
a certain monotone dependence, either continuous or dicontinuous, of temporal
convergence rates on rates of initially present spatial decay is detected, where 
in apparently perfect accordance with intuitive ideas, monotonicity is usually directed in such a way that
fast spatial decay implies fast temporal decay and vice versa.\abs
In the present context, however, our results indicate that 
the unit-mass constraint considered here
enforces a very subtle balance between nonlocal reaction and nonlinear diffusion in (\ref{0}) which is such that
this direction of monotone dependence is actually reversed.\abs
For initial data satisfying algebraic decay conditions consistent with our overall integrability requirements,
this becomes manifest in a corresponding counterintuitive ordering of the respective coefficients in logarithmic
growth estimates for $\E$ which are the objective of the following two statements.
\begin{theo}\label{theo200}
  Suppose that $u_0\in C^0(\R^n)$ is positive such that (\ref{10.1})-(\ref{10.3}) are fulfilled,
  and let $u$ and $\E$ be as in Theorem \ref{theo10} and (\ref{def_E}).\abs
  i) \ If 
  \bas
	u_0(x)\geq c_0(1+|x|)^{-\gamma} \qquad \mbox{for all } x\in \R^n
  \eas
  for some $c_0>0$ and $\gamma>n$, then there exist $T>0$ and $C>0$ such that 
  \begin{equation}\label{eq:E_upperbd}
 	\E(t) \leq \frac{\gamma-n}{n+2} \ln t + C \qquad \mbox{for all } t>T.
  \end{equation}
  ii) \ If there exist $C_0>0$ and $\gamma>n$ fulfilling
  \begin{equation}\label{eq:u0leq}
	u_0(x)\leq C_0 (1+|x|)^{-\gamma} \qquad \mbox{ for all } x\in\R^n,
  \end{equation}
  then for any $\eps\in (0,\gamma)$ one can find 
  $C(\eps)>0$ with the property that
  \begin{equation}\label{eq:Egeq_cor}
	\E(t) \geq \frac{\gamma-n - \eps}{n+2} \ln t - C(\eps) \qquad \mbox{for all } t>0.
  \end{equation}
\end{theo}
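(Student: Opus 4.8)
The plan is to eliminate the nonlocal zeroth-order term by an exponential substitution combined with a nonlinear change of the time variable, thereby reducing everything to a question about loss of mass for a purely local equation. Writing $D(t):=\ir |\na u(\cdot,t)|^2$, so that $\E'(t)=D(t)$, I set $v(x,\tau):=e^{-\E(t)}u(x,t)$ and introduce the new time $\tau=\tau(t):=\int_0^t e^{\E(s)}\,ds$, which is a smooth increasing bijection of $[0,\infty)$ onto itself because $\E\ge 0$. Since $u$ is a positive classical solution we have $u_t-D(t)u=u\Delta u$ pointwise, and with $u=e^{\E}v$ a direct computation shows that $v$ solves the autonomous, purely local degenerate equation $v_\tau=v\Delta v$ with $v(\cdot,0)=u_0$, while the conserved mass $\ir u(\cdot,t)=1$ becomes $M(\tau):=\ir v(\cdot,\tau)=e^{-\E(t)}$. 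Together with $\frac{dt}{d\tau}=e^{-\E}=M$ this yields the two exact identities
\[
  \E=-\log M(\tau) \qquad\text{and}\qquad t=\int_0^\tau M(\sigma)\,d\sigma ,
\]
which reduce the theorem to quantifying how fast $M(\tau)$ decays to zero for the local equation $v_\tau=v\Delta v$ started from $u_0$.

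The next step is to obtain two-sided algebraic control of $M$ by comparison with explicit self-similar barriers $w(x,\tau)=(\tau+\tau_0)^{-\alpha}\phi\big(|x|(\tau+\tau_0)^{-\beta}\big)$. Inserting such $w$ into $v_\tau-v\Delta v$ and using the scaling balance $\alpha+2\beta=1$, the sign of $w_\tau-w\Delta w$ is governed by the radial expression $-\alpha\phi-\beta\eta\phi'-\phi\phi''-\frac{n-1}{\eta}\phi\phi'$. For a profile with tail $\phi(\eta)\sim\eta^{-\gamma}$ its leading contribution equals $(\beta\gamma-\alpha)\eta^{-\gamma}$, and the genuinely self-similar choice $\alpha=\frac{\gamma}{\gamma+2}$, $\beta=\frac{1}{\gamma+2}$ makes this vanish, leaving the next-order term $\gamma(n-\gamma-2)\eta^{-2\gamma-2}$, which is \emph{negative} since $\gamma>n$. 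Hence this self-similar profile (smoothed, e.g., as $\phi(\eta)=\kappa(A+\eta^2)^{-\gamma/2}$) is a \emph{subsolution}, and with a small coefficient $\kappa$ it can be fitted below $u_0$ whenever $u_0\ge c_0(1+|x|)^{-\gamma}$; comparison then gives $M(\tau)\ge c\,(\tau+\tau_0)^{-a}$ with the sharp exponent $a=\alpha-n\beta=\frac{\gamma-n}{\gamma+2}$, the profile having finite mass precisely because $\gamma>n$. For the reverse bound one needs a \emph{supersolution} dominating $u_0$; keeping the tail exponent $\gamma$ but letting the barrier spread slightly faster, i.e.\ taking $\beta>\frac{1}{\gamma+2}$ so that $\beta\gamma-\alpha>0$, makes the leading term positive, at the price of a mass exponent $a_\eps=1-(n+2)\beta$ strictly below $\frac{\gamma-n}{\gamma+2}$ — this is exactly the origin of the loss $\eps$ in part ii). Using the two free parameters $A$ (central height) and $\tau_0$ (tail coefficient) one arranges $w(\cdot,\tau_0)\ge u_0$ under \eqref{eq:u0leq}, which yields $M(\tau)\le C\,(\tau+\tau_0)^{-a_\eps}$.

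Finally I would translate these bounds on $M(\tau)$ into the asserted estimates for $\E$ as a function of $t$. Feeding $M(\tau)\ge c\,(\tau+\tau_0)^{-a}$ into $t=\int_0^\tau M$ bounds $t$ from below, hence $\tau\le C\,t^{1/(1-a)}$ for large $t$, and combining with $\E=-\log M\le C'+a\log(\tau+\tau_0)$ produces $\E(t)\le\frac{a}{1-a}\log t+C$; since $\frac{a}{1-a}=\frac{\gamma-n}{n+2}$, this is precisely \eqref{eq:E_upperbd}. Symmetrically, $M(\tau)\le C\,(\tau+\tau_0)^{-a_\eps}$ gives $t\le C\,\tau^{1-a_\eps}$, hence $\tau\ge c\,t^{1/(1-a_\eps)}$ and $\E=-\log M\ge a_\eps\log\tau-C\ge\frac{a_\eps}{1-a_\eps}\log t-C$, where $\frac{a_\eps}{1-a_\eps}$ approaches $\frac{\gamma-n}{n+2}$ from below as $\beta\downarrow\frac{1}{\gamma+2}$, giving \eqref{eq:Egeq_cor} after renaming $\eps$ and adjusting the constant so as to cover also bounded $t$. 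I expect the main obstacle to be the comparison step, and within it the supersolution: the equation $v_\tau=v\Delta v$ sits at the borderline degeneracy, and while comparison of positive classical functions follows from the maximum principle (at a touching point the coefficient $v=w$ is positive), the differential inequality for the supersolution must be verified globally, reconciling the sign at the center with that in the tail for the concrete smoothed profile; I would therefore tune $A,\tau_0,\beta$ carefully, possibly combining several barriers, and run the comparison on balls of growing radius, exploiting positivity and spatial decay to pass to the limit on $\R^n$ and to control $v$ near its small values.
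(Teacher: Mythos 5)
Your reduction is exactly the one the paper uses: your pair $(v,\tau)$ inverts the transformation $u(x,t)=g(t)v(x,h(t))$ of Definition \ref{def:fcts}, your identities $\E=-\ln M(\tau)$ and $t=\int_0^\tau M$ are the paper's relations $1=g(t)\|v(\cdot,h(t))\|_{L^1(\R^n)}$, $h'=g$, $\E=\ln g$ from Lemma \ref{lem:reg_and_identities}, and your final ODE translation (including $\frac{a}{1-a}=\frac{\gamma-n}{n+2}$) reproduces the computation in the paper's proof. The divergence is in how the two mass-decay estimates for $v_s=v\Delta v$ are obtained: the paper imports both from \cite{fast_growth1} (Lemma \ref{lem:v_estimate}), whereas you try to prove them with self-similar barriers. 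Your subsolution for part i) is essentially correct: with $\phi(\eta)=\kappa(A+\eta^2)^{-\gamma/2}$, $\alpha=\frac{\gamma}{\gamma+2}$, $\beta=\frac{1}{\gamma+2}$, and writing $\rho=\eta^2/A$, $\mu=\kappa A^{-\gamma/2-1}$, the sign of $w_\tau-w\Delta w$ is that of
\begin{equation*}
F(\rho)=\bigl[-\alpha+(\beta\gamma-\alpha)\rho\bigr]+\mu\gamma(1+\rho)^{-\frac{\gamma}{2}-1}\bigl[n-(\gamma+2-n)\rho\bigr],
\end{equation*}
and with $\beta\gamma=\alpha$ indeed $F\le -\alpha+n\gamma\mu\le 0$ whenever $\mu\le\frac{\alpha}{n\gamma}$. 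What still needs repair there is the comparison step: your barrier is strictly positive on $\partial B_R$, where no lower bound for $v$ is available, so comparison on balls does not close; the paper's own lower-bound comparisons (see Lemma \ref{lem:rapid_lowerest}) always use caps $y(\sigma)\varphi_R(x)$ that vanish on $\partial B_R$ precisely to avoid this. This is fixable, but it is a real step, not a remark.

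The genuine gap is the supersolution in part ii). Set $\delta:=\beta\gamma-\alpha=\beta(\gamma+2)-1>0$. A supersolution requires $F(\rho)\ge 0$ for all $\rho\ge 0$, but on the annulus $\frac{n}{\gamma+2-n}<\rho<\frac{\alpha}{\delta}$ both brackets in $F$ are negative, hence $F<0$ there for every $\mu>0$; since the condition involves only the scale-invariant quantities $\rho$ and $\mu$, no tuning of $\kappa$, $A$, $\tau_0$ can remove the failure. This annulus is nonempty --- in fact enormous --- precisely when $\delta$ is small, i.e.\ precisely in the regime $\beta\searrow\frac{1}{\gamma+2}$ that you need in order to push $\frac{1-(n+2)\beta}{(n+2)\beta}$ up to $\frac{\gamma-n}{n+2}$, so the loss cannot be confined to the claimed $\eps$. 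Gluing barriers does not obviously rescue this: the pure power $\kappa(\tau+\tau_0)^{\delta}|x|^{-\gamma}$ is a supersolution only where $|x|^{\gamma+2}\gtrsim(\tau+\tau_0)^{1+\delta}$, so the bad intermediate region grows in self-similar variables. The paper obtains part ii) by an entirely different mechanism that bypasses pointwise barriers: the $L^p$--$L^1$ decay estimate $\|v(\cdot,s)\|_{L^1(\R^n)}\le Cs^{-(1-p)/(1+\frac{2p}{n})}$ for $v_0\in L^p(\R^n)$, $p\in(0,1)$ (Lemma \ref{lem:v_estimate}~ii), quoted from \cite{fast_growth1} and proved there by energy/functional-inequality methods), applied with $p:=\frac{n}{\gamma-\eps}$, which \eqref{eq:u0leq} makes admissible and which yields exactly the constant $\frac{\gamma-n-\eps}{n+2}$ in \eqref{eq:Egeq_cor}. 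So your part i) can be salvaged with a more careful comparison argument, but your part ii) needs a different idea --- either a proof of such a smoothing-type estimate or the citation the paper relies on.
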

A natural next problem appears to consist in determining how far it is possible to close the gap between the logarithmic
estimates from Theorem \ref{theo200}, thus essentially sharp e.g.~for initial data with precise
algebraic decay, and the general property (\ref{103.3}) which excludes linear and faster growth of $\E$ but
would be well consistent with any sublinearly increasing $\E$.
In order to obtain solutions for which $\E$ becomes substantially larger than expressed in Theorem \ref{theo200}, 
we shall consider more rapidly decaying intial data, some prototypical choices of 
which will be specified below in Corollary \ref{cor:firstexample} and Corollary \ref{cor:secondexample}.
For technical reasons, the formulation of our general statement in this direction will require the introduction of a 
function $\ell\in C^0([0,\infty))$ which is such that
\be{ell:zero_pos_nondec}
	\ell(0)=0, \quad
	\ell>0 \mbox{ in } (0,\infty)
	\quad \mbox{and} \quad
	\ell \mbox{ is nondecreasing on } (0,\infty),
\ee
but that $\ell$ grows in a significantly sublinear manner in that
\be{ell:monLstrich}
	\mbox{there exists $\xi_0>0$ such that } 
	(\xi_0,\infty) \ni \xi\mapsto \xi \cdot \ell^{\frac{n+2}n}\Big(\frac{1}{\xi}\Big) 
	\mbox{ is nondecreasing,}
\ee
and that
\be{ell:intcond}
  	\int_1^\infty \frac{d\xi}{\xi \cdot \ell^{\frac{n+2}{n}}\big(\frac{1}{\xi}\big)} =\infty
\ee
which, for instance, is satisfied for $\ell(\xi):=\xi^\alpha$, $\xi\ge 0$, whenever $\alpha\in (0,\frac{n}{n+2})$,
but also for $\ell(\xi):=\ln^\alpha (\xi+1)$, $\xi\ge 0$, for arbitrary $\alpha>0$.
Given any such $\ell$, we furthermore let
\be{eq:definecalL}
	 \calL(t):=\int_1^t \frac{d\xi}{\xi \cdot \ell^\frac{n+2}n\big(\frac{1}{\xi}\big)},
	\qquad  t>1,
\ee
noting that by \eqref{ell:zero_pos_nondec}, $\calL$ is strictly monotone and so is $\calL^{-1}\colon (0,\infty)\to(1,\infty)$, 
which is well-defined due to \eqref{ell:intcond}.  
Then under appropriate conditions on the spatial decay of the initial data, the quantity $\E(t)$ can be estimated
in terms of $\calL$.
\begin{theo}\label{theo300}\label{rapid}
  Let $u_0\in C^0(\R^n)$ be a positive function satisfing (\ref{10.1})-(\ref{10.3}),
  and let $u$ and $\E$ be as given by Theorem \ref{theo10} and (\ref{def_E}).
  Moreover, let $\ell \in C^0([0,\infty)$ satisfy (\ref{ell:zero_pos_nondec}), (\ref{ell:monLstrich}) and (\ref{ell:intcond}).%\abs
\begin{enumerate}[label={\roman*)},leftmargin=0em, itemindent=0.5cm]
\item \label{r_Eleq}
  Assume that there exists $\xi_1\in (0,\infty]$ such that $\ell$ is strictly increasing on $(0,\xi_1)$,
  and that
  \be{ell:0limit}
 	\frac{\xi \ell'(\xi)}{\ell(\xi)} \to 0
	\qquad \mbox{as } \xi\searrow 0.
  \ee
  Moreover, suppose that there exist $q\in (0,1)$ and 
  $\Rst>\max\set{\frac1{\sqrt[n]{\xi_1}},\frac1{\sqrt[n]{\lim_{\xi\nearrow \xi_1} \ell(\xi)}}}$ %schreibweise mit lim für den Fall \xi _1=\infty
  such that
  \bas
	u_0(x)\geq \bigg\{ \ell^{-1}\left(\frac{1}{|x|^n}\right)\bigg\}^q
	\qquad \mbox{ for all }x\in\R^n\setminus B_{\Rst}.
  \eas
  Then for some $t_0>0$ and $C_1 >0$, $C_2\geq 0$
   \bas
	 \E(t)\leq\ln\Big( (\calL^{-1})'(C_1  t)\Big) +C_2
	\qquad \mbox{for all }t>t_0.
  \eas
\item \label{r_Egeq}
  Suppose that there exist $\xi_2>0$, $a>0$ and $\lambda_0>0$ such that $\ell\in C^2((0,\xi_2))$ and 
  \be{ell3}
	\xi\ell''(\xi) \ge - \ell'(\xi)
	\qquad \mbox{for all } \xi\in (0,\xi_2)
  \ee
  as well as
  \be{ell33}
	\ell(\xi)\le (1+a\lambda) \ell(\xi^{1+\lambda}) 
	\qquad \mbox{ for all } \xi\in(0,\xi_2) \mbox{ and each } \lambda\in(0,\lambda_0).
  \ee
  If 
  there exists a radially symmetric $\uo_0\in C^0(\R^n)$, nonincreasing with respect to $|x|$ and satisfying $u_0\le \uo_0$ 
  as well as
  \bas
	\uo_0<\min\Big\{ \xi_2^2 \, , \, \xi_2^{\frac{2}{1+q_0}}\Big\} \qquad \mbox{in }\R^n
  \eas
  for some $q_0>0$ and 
  \bas
 	\ir \ell(\uo_0) <\infty,
  \eas
  then we can find $t_0>0$ and $C_1 >0$, $C_2\ge 0$ such that with $\E$ and $\calL$ taken from (\ref{def_E}) and (\ref{eq:definecalL})
  we have 
  \bas
  	\E(t)  \geq \ln \Big( (\calL^{-1})'(C_1  t)\Big) -C_2
	\qquad \mbox{for any }t>t_0.
  \eas
\end{enumerate}
\end{theo}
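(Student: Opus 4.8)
The plan is to remove the nonlocality by an amplitude rescaling coupled with a change of the time variable. Writing $a(t):=\ir|\nabla u(\cdot,t)|^2=\E'(t)$ and setting
\[
u(x,t)=e^{\E(t)}w(x,\tau(t)),\qquad \tau(t):=\int_0^t e^{\E(s)}\,ds,
\]
a direct computation shows that the reaction term $a(t)u$ is exactly cancelled, so that $w$ solves the purely local degenerate equation $w_\tau=w\Delta w$, while the unit-mass property \eqref{mass_u} turns into $m(\tau):=\ir w(\cdot,\tau)=e^{-\E(t)}$ with $m'(\tau)=-\ir|\nabla w|^2\le0$. Thus $\E(t)=-\ln m(\tau(t))$ and $\frac{d\tau}{dt}=1/m$ (so $\tau\ge t\to\infty$), and both asserted estimates reduce to (a) controlling the mass decay $m(\tau)$ for the \emph{local} problem, and (b) integrating the time change $dt=m\,d\tau$ to re-express the inner scale through the real time $t$. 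A scaling computation with \eqref{eq:definecalL} indicates that it is precisely this double integration—over the inner dynamics and then through the factor $m$—that reproduces the integral $\int dT/\big(T\ell^{\frac{n+2}n}(1/T)\big)=\calL(T)$, so that $\ln\big((\calL^{-1})'(C_1t)\big)$ and $-\ln m$ agree to leading order once \eqref{ell:0limit} is available.

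For the upper bound in \ref{r_Eleq} I would use the pointwise lower bound on $u_0$ to build an explicit radially symmetric subsolution $\underline w(x,\tau)$ of $w_\tau=w\Delta w$, with profile modelled on $\{\ell^{-1}(|x|^{-n})\}^q$ and satisfying $\underline w(\cdot,0)\le u_0$; here the exponent $q\in(0,1)$, strict monotonicity of $\ell$, and the regular-variation condition \eqref{ell:0limit} supply the slack needed to verify the subsolution inequality for a self-similarly spreading ansatz. Comparison then gives $w\ge\underline w$ and $m(\tau)\ge\underline m(\tau):=\ir\underline w(\cdot,\tau)$. Since $\frac{d\tau}{dt}=1/m\le1/\underline m$, integration yields $\int_0^\tau\underline m\le t$, and inserting the explicit mass/height decay of $\underline w$ into $dt=m\,d\tau$ produces exactly the $\calL$-integral; identifying the resulting inner scale as $\sim\calL^{-1}(C_1t)$ gives $\E=-\ln m\le\ln\big((\calL^{-1})'(C_1t)\big)+C_2$.

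For the lower bound in \ref{r_Egeq} the dominating radial supersolution $\overline w$ with $\overline w(\cdot,0)=\overline u_0$ gives $m\le\overline m:=\ir\overline w$ by comparison, the range restrictions $\overline u_0<\min\{\xi_2^2,\xi_2^{2/(1+q_0)}\}$ and the doubling-type bound \eqref{ell33} being what render such a barrier admissible. The new ingredient is the Lyapunov functional $\ir\ell(w)$: integrating $w_\tau=w\Delta w$ by parts,
\[
\frac{d}{d\tau}\ir\ell(w)=-\ir\big(\ell'(w)+w\ell''(w)\big)|\nabla w|^2\le0
\]
by \eqref{ell3}, so $\ir\ell(\overline w)\le\ir\ell(\overline u_0)=:M_0<\infty$. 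Since $\ell$ is sublinear, $\xi\mapsto\ell(\xi)/\xi$ is decreasing, whence $M_0\ge\frac{\ell(H)}{H}\,\overline m$ for the height $H=\max\overline w$, i.e. $H/\ell(H)\gtrsim\overline m$, a lower bound on $H$. Feeding this into a Nash--Gagliardo--Nirenberg inequality for the radial profile yields a Dirichlet-energy lower bound $\ir|\nabla\overline w|^2\ge\Psi(\overline m)$, hence rapid decay $\overline m'\le-\Psi(\overline m)$; integrating and using $\frac{d\tau}{dt}\ge1/\overline m$ reproduces the same $\calL$-integral, now as a lower bound, giving $\E\ge\ln\big((\calL^{-1})'(C_1t)\big)-C_2$.

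The principal obstacles are twofold. First, $w_\tau=w\Delta w$ is the borderline degenerate case $p=1$ for which weak solutions may be nonunique or discontinuous, so every comparison step must be carried out within the classical framework of Definition \ref{defi11}, presumably through spatial approximation on balls and a careful limit passage that controls the region where $w$ is small. Second, and more technically, the scaling heuristics must be upgraded to rigorous two-sided estimates matching the exact $\calL$: conditions \eqref{ell:monLstrich} and \eqref{ell:intcond} guarantee that $\calL$ and $\calL^{-1}$ are well defined and that the inner integral diverges, while \eqref{ell:0limit} makes the discrepancy between the exponents $1$ and $\frac{n+2}{n}$ (in $-\ln m$ versus $\ln((\calL^{-1})')$) lower order. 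I expect the Dirichlet-energy lower bound $\ir|\nabla\overline w|^2\ge\Psi(\overline m)$ for part \ref{r_Egeq}, together with the self-consistent handling of the coupling $\frac{d\tau}{dt}=e^{\E}$—which ties the unknown $\E$ to the time change and so needs a monotonicity or continuity argument to close—to be the hardest part.
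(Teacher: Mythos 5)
Your reduction is, in substance, identical to the paper's: your $(w,\tau,m)$ are exactly the paper's $(v,h,1/g)$ from Definition \ref{def:fcts} (the paper runs the transformation in the opposite direction, constructing $u$ from the minimal solution $v$ of (\ref{0v}), so your reverse substitution recovers precisely that $v$), and your identity $\E=-\ln m$ together with $dt=m\,d\tau$ is the paper's $\E=\ln g$, $H'(s)=\ir v(\cdot,s)$. Your conversion of two-sided $L^1$ decay bounds for the local problem into the asserted $\calL$-bounds is also correct and is the content of Lemma \ref{lem:rapid_Egeq_preparation} and Lemma \ref{lem:rapid_Eleq_preparation}; note, though, that this matching is exact and uses only the monotonicity (\ref{ell:monLstrich}) of $1/\calL'$ — condition (\ref{ell:0limit}) is not needed there, but rather inside the part \ref{r_Eleq} barrier construction. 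The genuine gaps are in the two mass-decay estimates themselves, which carry all the technical weight. For part \ref{r_Eleq}, your subsolution is only an ansatz: you never verify the subsolution inequality, and the paper's actual construction (Lemma \ref{lem:rapid_lowerest}) is not a self-similar profile but a torsion-function barrier $y(\sigma)\varphi_{R(\sigma_0)}$ in the exponentially rescaled variables $z=(s+1)v$, $\sigma=\ln(s+1)$ (where the equation becomes $z_\sigma=z\Delta z+z$), with $R(\sigma)=\{\ell(e^{-\sigma})\}^{-1/n}$ and a logistic ODE for $y$; condition (\ref{ell:0limit}) enters precisely to keep $(q-1)\sigma+2\ln R(\sigma)$ bounded above, which is what makes the comparison constants uniform. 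This gap is fixable with work, but as written it is not a proof.

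The gap in part \ref{r_Egeq} is more serious, because the proposed chain is flawed as stated: no inequality of the form $\ir|\nabla\overline w|^2\ge\Psi(\overline m)$ with $\Psi$ depending on the mass alone can hold, since at fixed mass the Dirichlet integral can be made arbitrarily small by spreading the profile. The decay mechanism must couple the mass to the $L^\infty$ norm and to the $\ell$-moment, and closing this loop is exactly the content of the companion results the paper imports in Lemma \ref{lem:rapiddecr_l1}: an $L^q$ decay estimate (\cite[Lemma 3.6]{fast_growth2}) and an $L^\infty$ decay estimate valid for radially symmetric nonincreasing data (\cite[Theorem 1.3]{fast_growth2}), which are then interpolated via $\ir v=\ir v^{1-q}v^q\le\|v\|_{L^\infty}^{1-q}\|v\|_{L^q}^q$. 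Your Lyapunov computation $\frac{d}{d\tau}\ir\ell(w)=-\ir(\ell'(w)+w\ell''(w))|\nabla w|^2\le 0$ under (\ref{ell3}) is correct and is indeed an ingredient of that theory, and the comparison $m\le\overline m$ of minimal solutions with ordered data is unproblematic (via the approximation of Lemma \ref{lem01} and \cite{wiegner}); but your mass--height step $H/\ell(H)\gtrsim\overline m$ relies on $\xi\mapsto\ell(\xi)/\xi$ being decreasing, which is not among the hypotheses, and even granting it you still need an independent decay estimate for the height $H$ — this is where the radial monotonicity of $\uo_0$ (which your argument never actually uses), the range restriction $\uo_0<\min\{\xi_2^2,\xi_2^{2/(1+q_0)}\}$, and the doubling condition (\ref{ell33}) enter in \cite{fast_growth2}. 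In effect, what you label the hardest step is a research-level result that the authors devoted a separate paper to, and your sketch does not indicate how it would be proved.
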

In particular, arbitrary sublinear algebraic growth of $\E$ occurs for initial data exhibiting certain exponential types
of decay.
Let us underline that due to the upward monotonicity of the mapping $0<\beta\mapsto \frac{1}{1+\frac{n+2}{n}\beta}$,
the following result is again consistent with the above observation that fast spatial decay of $u_0$
generates slow decay of the solution.
\begin{cor}\label{cor:firstexample}
  Let $u_0\in C^0(\R^n)$ be positive and such that (\ref{10.1})-(\ref{10.3}) hold, and let $u$ and $\E$
  be as in Theorem \ref{theo10} and (\ref{def_E}).\\
\begin{enumerate}[label={\roman*)},leftmargin=0em, itemindent=0.5cm]
\item\label{cor:firstexample:leq}
  If
  \bas
	u_0(x)\geq c_0 e^{-\alpha |x|^\beta}\qquad \text{for all } x\in\R^n\setminus B_{\Rst}
  \eas
  for some $\Rst>0$, $\beta>0$, $\alpha\in(0,1)$ and $c_0>0$, 
  then there exist $t_0>0$ and $C>0$ such that
  \bas
	\E(t)\leq Ct^{1/(1+\frac{n+2}\beta)} 	
	\qquad \mbox{for all } t>t_0.
  \eas
\item\label{cor:firstexample:geq}%  ii) \ 
  If $u_0$ satisfies
  \bas
	u_0(x)\leq C_0 e^{-\alpha |x|^\beta}
  \eas
  for some $C_0>0$, $\alpha>0$ and $\beta>0$, then for any $\eps>0$ we can find $t_0>0$ and $c>0$ such that
  \bas
	\E(t)\geq c t^{1/(1+\frac{n+2}{\beta}+\eps)}
	\qquad \mbox{for all } t>t_0. 
  \eas
\end{enumerate}
\end{cor}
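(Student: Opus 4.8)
The plan is to deduce both estimates from Theorem~\ref{theo300} by feeding it, in each case, an explicit weight $\ell$ whose behavior as $\xi\searrow 0$ is tuned to the exponential decay rate. The guiding relations are that for $\ell(\xi)=(\ln\frac1\xi)^{-s}$ one has $\ell^{-1}(\eta)=e^{-\eta^{-1/s}}$, hence $\ell^{-1}(|x|^{-n})=e^{-|x|^{n/s}}$, and that $\ell^{\frac{n+2}n}(\frac1\xi)=(\ln\xi)^{-\sigma}$ with $\sigma:=\frac{n+2}n s$. I would first record, once and for all, the resulting asymptotics of $\calL$: the defining integral \eqref{eq:definecalL} yields $\calL(t)\sim\frac1{\sigma+1}(\ln t)^{\sigma+1}$, so that $\calL^{-1}(u)\sim\exp(c\,u^{1/(\sigma+1)})$, and using $(\calL^{-1})'(u)=\calL^{-1}(u)\cdot(\ln\calL^{-1}(u))^{-\sigma}$ together with $\calL'(t)=t^{-1}(\ln t)^{\sigma}$ one gets $\ln\big((\calL^{-1})'(C_1t)\big)\sim c\,t^{1/(\sigma+1)}$ as $t\to\infty$. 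Thus both conclusions of Theorem~\ref{theo300} reduce to tracking the single exponent $\frac1{\sigma+1}$.

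For part~\ref{cor:firstexample:leq} I would take $\ell$ equal to $(\ln\frac1\xi)^{-n/\beta}$ near the origin, so that $s=\frac n\beta$ and $\sigma=\frac{n+2}\beta$, extended continuously, positively and nondecreasingly to $[0,\infty)$. The requirements \eqref{ell:zero_pos_nondec}--\eqref{ell:intcond} and the extra hypotheses of Theorem~\ref{theo300}\ref{r_Eleq} are routine; in particular $\frac{\xi\ell'(\xi)}{\ell(\xi)}=\frac n\beta(\ln\frac1\xi)^{-1}\to0$ gives \eqref{ell:0limit}. The one genuine input is the pointwise lower bound of Theorem~\ref{theo300}\ref{r_Eleq}: here $\{\ell^{-1}(|x|^{-n})\}^q=e^{-q|x|^\beta}$, and since $\alpha<1$ I may fix $q\in(\alpha,1)$, so that $e^{-q|x|^\beta}\le c_0e^{-\alpha|x|^\beta}\le u_0(x)$ for all large $|x|$, i.e.\ on $\R^n\setminus B_{\Rst}$ for a suitable $\Rst$. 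Theorem~\ref{theo300}\ref{r_Eleq} then gives $\E(t)\le\ln\big((\calL^{-1})'(C_1t)\big)+C_2\le Ct^{1/(1+\frac{n+2}\beta)}$ for large $t$, as asserted.

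For part~\ref{cor:firstexample:geq} I would instead use $\ell(\xi)=(\ln\frac1\xi)^{-s}$ near $0$ with a slightly larger exponent $s=\frac{n+\eps'}\beta>\frac n\beta$, the surplus $\eps'>0$ being exactly what produces the loss $\eps$. As radial nonincreasing majorant I take $\uo_0(x):=C_0e^{-\alpha|x|^\beta}$, which dominates $u_0$ by hypothesis. Enlarging $s$ is forced by integrability: near infinity $\ell(\uo_0)\sim(\alpha|x|^\beta)^{-s}=$ const$\cdot|x|^{-\beta s}=$ const$\cdot|x|^{-(n+\eps')}$, so $\ir\ell(\uo_0)<\infty$ precisely because $\beta s>n$. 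To meet the global smallness $\uo_0<\min\set{\xi_2^2,\xi_2^{2/(1+q_0)}}$ without assuming $u_0$ itself small, I would extend $\ell$ beyond its logarithmic range to a $C^2$, positive, nondecreasing function with $\xi\ell'(\xi)$ nondecreasing (which gives \eqref{ell3}), noting that \eqref{ell33} becomes automatic for $\xi\ge1$ by monotonicity, so that $\xi_2$ and with it the threshold $\xi_2^{2/(1+q_0)}$ can be taken $>C_0\ge\norm[L^\infty(\R^n)]{\uo_0}$; on the logarithmic range \eqref{ell33} reduces to $(1+\lambda)^s\le1+a\lambda$ for small $\lambda$, valid once $a>s$. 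With $\sigma=\frac{n+2}\beta\cdot\frac{n+\eps'}n$, Theorem~\ref{theo300}\ref{r_Egeq} yields $\E(t)\ge\ln\big((\calL^{-1})'(C_1t)\big)-C_2\ge c\,t^{1/(1+\sigma)}$; choosing $\eps'=\frac{n\beta\eps}{n+2}$ makes $1+\sigma=1+\frac{n+2}\beta+\eps$, which is the claimed bound.

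The structural verifications for $\ell$ and the passage from the decay of $u_0$ to the hypotheses of Theorem~\ref{theo300} are routine. The main work, and the only place genuinely requiring care, is the asymptotic inversion of $\calL$ and the estimate of $\ln((\calL^{-1})')$: since Theorem~\ref{theo300} delivers one-sided bounds, one must control $\calL$, $\calL^{-1}$ and $(\calL^{-1})'$ by explicit two-sided monotone estimates and check that the subleading factor $(\ln\calL^{-1})^{-\sigma}$ and the $O(1)$ contributions from the non-logarithmic range of $\ell$ affect only the constants and not the exponent $\frac1{1+\sigma}$. In part~\ref{cor:firstexample:geq} a second subtlety is to reconcile, within a single $\ell$, the competing demands of integrability of $\ir\ell(\uo_0)$ (forcing $s>\frac n\beta$) and of global smallness of the majorant (forcing a large $\xi_2$, hence a suitable extension of $\ell$); it is precisely this tension that accounts for the unavoidable $\eps$ in the exponent.
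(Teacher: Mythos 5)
Your proposal is correct in substance and follows essentially the same route as the paper: both parts are reduced to Theorem \ref{theo300} via a weight behaving like $(\ln\frac{1}{\xi})^{-\kappa}$ near $\xi=0$, with $\kappa=\frac{n}{\beta}$ in part i) and $\kappa=\frac{n}{\beta}+\frac{n\eps}{n+2}$ in part ii), and the growth exponent is extracted from the same explicit computation $\calL(t)\approx c\,(\ln t)^{1+\kappa\frac{n+2}{n}}$, hence $\ln\big((\calL^{-1})'(t)\big)\approx c\,t^{1/(1+\kappa\frac{n+2}{n})}$; your exponents coincide exactly with the paper's $\gamma=1+\kappa\frac{n+2}{n}$.

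The one point where you deviate, and where your argument as written is incomplete, is the globalization of $\ell$ in part ii). The paper takes $\ell(\xi)=\ln^{-\kappa}(M/\xi)$ with a free parameter $M>\max\{2C_0,2\}$, capped at $\xi_2=\frac{M}{2}$: since the singularity of the logarithm sits at $\xi=M$, the explicit formula remains valid on all of $(0,\xi_2)$ with $\xi_2>C_0\geq \|\uo_0\|_{L^\infty(\R^n)}$, and \eqref{ell3}, \eqref{ell33} for precisely this function are quoted from \cite[Lemma 3.9]{fast_growth2}. With your normalization $M=1$ the logarithmic formula degenerates at $\xi=1$, so it must be cut off at some $\xi_*<1$ and continued by a $C^2$ extension up to $\xi_2>C_0$. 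Your verification of \eqref{ell33} covers the logarithmic range (where it reduces to $(1+\lambda)^\kappa\le 1+a\lambda$) and the range $\xi\ge 1$ (where it is automatic because $\xi^{1+\lambda}\ge\xi$ and $\ell$ is nondecreasing), but it leaves out the band $[\xi_*,1)$: there $\xi^{1+\lambda}<\xi$, so monotonicity works against you and \eqref{ell33} is a genuine constraint on the extension. This is repairable --- on a compact interval away from $0$ any positive $C^1$ extension has bounded logarithmic derivative, whence $\ell(\xi)/\ell(\xi^{1+\lambda})\le e^{c\lambda\ln(1/\xi_*)}\le 1+a\lambda$ for suitable $a$ --- and the $C^2$ gluing with $\xi\ell'(\xi)$ nondecreasing (your reformulation of \eqref{ell3}) can be arranged, for instance by prescribing $\xi\ell'(\xi)$ itself as a $C^1$ nondecreasing continuation; but these steps have to be written out, whereas the paper's $M$-device avoids them entirely. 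Part i) is unaffected by this issue, since Theorem \ref{theo300} i) imposes no smallness or $C^2$ requirement, so your cap below $\xi=1$ suffices there.
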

But also quite tiny deviations from linear growth of $\E$ can be enforced by appropriate choices of the initial
data. This is indicated by a second consequence of Theorem \ref{theo300} on a corresponding logarithmic
correction for very rapidly decaying data.
\begin{cor}\label{cor:secondexample}
  Let $u_0\in C^0(\R^n)$ be a positive function,
  satisfying (\ref{10.1})-(\ref{10.3}) as well as
  \bas
	u_0(x)\leq C_0 e^{-\alpha e^{|x|^{\beta}}}
	\qquad \mbox{for all } x\in\R^n
  \eas
  with some positive constants $C_0>0$, $\alpha>0$ and $\beta>0$.
  Then for any $\eps>0$ there exist $t_0>0$ and $c>0$ such that with $u$ and $\E$ taken from
  Theorem \ref{theo10} and (\ref{def_E}), we have
  \bas
	\E(t)\geq ct\left(\ln t\right)^{-(\frac{n+2}{\beta}+\eps)} \qquad \mbox{for all } t>t_0.
  \eas
\end{cor}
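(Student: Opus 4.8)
The plan is to obtain the assertion from the lower bound in Theorem~\ref{theo300}\ref{r_Egeq} by feeding in an iterated-logarithmic weight $\ell$. Fix $\eps>0$, abbreviate $m:=\frac{n+2}{n}$, and set $\sigma:=\frac{n}{\beta}+\frac{n\eps}{n+2}$, so that $\sigma m=\frac{n+2}{\beta}+\eps$ and, crucially, $\beta\sigma=n+\frac{n\beta\eps}{n+2}>n$. I would choose $\ell$ to agree with $\xi\mapsto\big(\ln\ln\frac{K}{\xi}\big)^{-\sigma}$ on $(0,\xi_2)$, where $K>0$ is a large constant and $\xi_2\in(0,K/e)$ is fixed so large that $\min\set{\xi_2^2,\xi_2^{2/(1+q_0)}}>C_0e^{-\alpha}$ for some $q_0>0$; beyond $\xi_2$ the function $\ell$ is extended to a positive, nondecreasing, continuous function on $(0,\infty)$. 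Since $\ell(\xi)\to0$ as $\xi\searrow0$ and $\ell$ is increasing near $0$, conditions (\ref{ell:zero_pos_nondec})--(\ref{ell:intcond}) hold; differentiating gives $\xi\ell'(\xi)=\sigma\big(\ln\frac{K}{\xi}\big)^{-1}\big(\ln\ln\frac{K}{\xi}\big)^{-\sigma-1}$, which is positive and increasing throughout $(0,\xi_2)$, whence (\ref{ell3}); and the slow variation of the iterated logarithm yields (\ref{ell33}). The role of the shift $K$ is precisely to enlarge the admissible range $(0,\xi_2)$ so as to accommodate the possibly large value $\sup u_0\le C_0e^{-\alpha}$.

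As radial majorant I would simply take $\uo_0(x):=C_0e^{-\alpha e^{|x|^{\beta}}}$, which is continuous, nonincreasing in $|x|$, dominates $u_0$ by hypothesis, and satisfies $\uo_0\le C_0e^{-\alpha}<\min\set{\xi_2^2,\xi_2^{2/(1+q_0)}}$. The only substantive point is the finiteness of $\ir\ell(\uo_0)$: for large $|x|$ one has $\ln\frac{1}{\uo_0(x)}\sim\alpha e^{|x|^{\beta}}$ and hence $\ln\ln\frac{K}{\uo_0(x)}\sim|x|^{\beta}$, so that $\ell(\uo_0(x))\sim|x|^{-\beta\sigma}$; since $\beta\sigma>n$, the integral $\ir\ell(\uo_0)\sim\int^{\infty}r^{n-1-\beta\sigma}\,dr$ converges. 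This is exactly where the extra $\eps$ in the statement is consumed: the borderline choice $\sigma=\frac n\beta$ would render this integral divergent.

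With all hypotheses of Theorem~\ref{theo300}\ref{r_Egeq} in force, it provides $t_0>0$, $C_1>0$ and $C_2\ge0$ with $\E(t)\ge\ln\big((\calL^{-1})'(C_1t)\big)-C_2$ for $t>t_0$, and it remains to unravel the asymptotics of $\calL^{-1}$. Substituting $u=\ln\xi$ in (\ref{eq:definecalL}) and using $\ell^{m}(1/\xi)=(\ln\ln\xi)^{-\sigma m}$ for large $\xi$ (up to the harmless shift $K$), I would show $\calL(\tau)=\int^{\ln\tau}(\ln u)^{\sigma m}\,du+O(1)\sim(\ln\tau)(\ln\ln\tau)^{\sigma m}$ as $\tau\to\infty$. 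Setting $y:=\ln\tau$ and $s:=\calL(\tau)$, the relation $y(\ln y)^{\sigma m}\asymp s$ together with the elementary bound $y\le s$ gives $y\ge c\,s(\ln s)^{-\sigma m}$; taking $\tau=\calL^{-1}(C_1t)$ and noting $\ln(C_1t)\le2\ln t$ for large $t$ then yields $\ln\calL^{-1}(C_1t)\ge c\,t(\ln t)^{-\sigma m}=c\,t(\ln t)^{-(\frac{n+2}{\beta}+\eps)}$. Finally, since $(\calL^{-1})'(s)=\calL^{-1}(s)\cdot\ell^{m}\big(1/\calL^{-1}(s)\big)$, the correction $\ln\ell^m(1/\tau)=-m\sigma\ln\ln\ln\tau$ is of order $\ln\ln t$ and thus negligible against the almost linear main term, so that $\E(t)\ge\ln\calL^{-1}(C_1t)-m\sigma\ln\ln\ln\tau-C_2\ge c't(\ln t)^{-(\frac{n+2}{\beta}+\eps)}$ for large $t$, as claimed.

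I expect the main obstacle to lie less in the PDE input---which is entirely supplied by Theorem~\ref{theo300}---than in two analytic bookkeeping tasks: verifying that the iterated-logarithmic $\ell$, once extended past $\xi_2$, genuinely satisfies the structural requirements (\ref{ell:zero_pos_nondec})--(\ref{ell33}) uniformly on the possibly large interval $(0,\xi_2)$, and controlling the inversion $\calL\mapsto\calL^{-1}$ to the precision needed to read off the exponent. The latter is delicate because $\calL$ is governed by a slowly varying factor $(\ln\ln\tau)^{\sigma m}$, so that the passage from $s\asymp y(\ln y)^{\sigma m}$ to a clean lower bound for $y$ must absorb iterated-logarithmic errors; the freedom in $\eps$ is what makes both this absorption and the integrability $\beta\sigma>n$ simultaneously available.
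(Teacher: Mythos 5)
Your proposal is correct, and its backbone is the same as the paper's: apply Theorem~\ref{theo300}\ref{r_Egeq} with an iterated--logarithmic weight $\ell(\xi)=\big(\ln\ln\frac{K}{\xi}\big)^{-\sigma}$ near $0$, whose exponent $\sigma=\frac{n}{\beta}+\frac{n\eps}{n+2}$ coincides exactly with the paper's $\kappa=\frac{n\gamma}{n+2}$, $\gamma=\frac{n+2}{\beta}+\eps$, and then invert $\calL$. You deviate in two technical steps, both in the direction of simplification. First, the paper does not use $\uo_0(x)=C_0e^{-\alpha e^{|x|^\beta}}$ directly: it passes to a weaker majorant $\Ctilde_0 e^{-\alphatilde e^{|x|^{\betatilde}}}$ with $\betatilde<\beta$ precisely so that the coefficient can be raised to $\alphatilde>1$ (impossible without lowering $\beta$ when $\alpha<1$), which makes its pointwise bound $\ell(\uo_0)\le(\ln\alphatilde+|x|^{\betatilde})^{-\kappa}$ clean; you instead keep the original majorant and spend the $\eps$-room through $\beta\sigma>n$, which works because the integrability of $\ell(\uo_0)\sim|x|^{-\beta\sigma}$ is a purely asymptotic matter and the shift $K$ keeps $\ell$ well defined where $\uo_0$ is not small. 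Second, for the inversion the paper bounds $\calL(t)\le 2^{1+\gamma}\ln t\,\ln^\gamma\ln t$ and then invokes the Lambert $W$ function together with $W(z)>\ln z-\ln\ln z$, whereas you argue elementarily from $s\asymp y(\ln y)^{\sigma m}$ and $y\le s$ that $y\ge c\,s(\ln s)^{-\sigma m}$; these are the same estimate in different packaging, and your version avoids the special function entirely. Your passage from $\calL^{-1}$ to $(\calL^{-1})'$ via the identity $(\calL^{-1})'(s)=\calL^{-1}(s)\,\ell^{m}\big(1/\calL^{-1}(s)\big)$ is also sound, provided (as you implicitly do) you use $\ln\calL^{-1}(C_1t)\le C_1t$ to bound the $\ln\ln\ln$ correction by $O(\ln\ln t)$; the paper instead combines its lower bound for $\calL^{-1}$ with the bound $\calL'(\xi)\le\xi^{-1/2}$ and the monotonicity of $1/\calL'$ coming from \eqref{ell:monLstrich}. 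The one place where you are thin on detail is \eqref{ell33}, which you dispatch with the phrase ``slow variation'': the claim is true (from $\ln(K\xi^{-(1+\lambda)})\le(1+\lambda)\ln(K/\xi)$ one gets $\ell(\xi)\le\big(1+\frac{\lambda}{\ln\ln K}\big)^{\sigma}\ell(\xi^{1+\lambda})$ for $\xi<1$, and the case $\xi\in[1,\xi_2)$ is trivial by monotonicity), but the paper delegates exactly this verification, together with \eqref{ell3}, to a cited lemma of Fila--Winkler, so if you want a self-contained proof you must write this computation out.
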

{\bf Plan of the paper.} \quad
A key for our analysis consists in the observation that if for a supposedly given classical solution of (\ref{0})
we let $L(t):=\ir |\nabla u(\cdot,t)|^2$, $t>0$, then upon suitable choices of the functions $f$ and $H$ on $[0,\infty)$,
the substitution $v(s,t):=f(s)u(x,t)$ with $t=H(s)$ transforms the equation $u_t=u\Delta u + L(t)u$ satisfied by $u$
to the yet degenerate but local unforced diffusion equation 
\be{v}
	v_s=v\Delta v.
\ee
Accordingly, as a prerequisite for our existence proof we will first make sure that under essentially the same
assumptions on the initial data as in Theorem \ref{theo10}, the Cauchy problem for the latter equation possesses
a smooth positive solution enjoying certain favorable spatial decay properties which inter alia allow for 
a precise control of the respective mass evolution as well as for the important conclusion
that the corresponding Dirichlet integral depends continuously on the time variable (see Section \ref{sect_v} and
especially Lemma \ref{lem5} and Lemma \ref{lem8}).
Relying on a known fundamental asymptotic property of these solutions
(Lemma \ref{FWProp1_3}), in Section \ref{sect3}
it will thereupon be possible to construct a suitable substitution which indeed transforms
solutions of (\ref{v}) into classical solutions of (\ref{0}) with conserved unit mass and some additional
regularity properties (Lemma \ref{lem9}), the latter enabling us to justify the monotonicty property
suggested by (\ref{energy}) and thereby establish Proposition \ref{prop103} in Section \ref{sect4}.
On the basis of known refined temporal decay estimates for solutions to (\ref{v}), in Section \ref{sect5}
we will obtain Theorem \ref{theo200} as an immediate consequence of the literature, whereas
in Section \ref{sect6} a more detailed analysis will be performed so as to finally yield
the inequalities claimed in Theorem \ref{theo300}, Corollary \ref{cor:firstexample} and Corollary \ref{cor:secondexample}.
\mysection{Analysis of the Cauchy problem for $v_s=v\Delta v$}\label{sect_v}
In this section we consider the corresponding initial-value problem
\be{0v}
	\left\{ \begin{array}{ll}
	v_s=v \Delta v, \qquad & x\in\R^n, \ s>0, \\[1mm]
	v(x,0)=v_0(x), & x\in\R^n,
	\end{array} \right.
\ee
with given positive data $v_0\in L^\infty(\R^n) \cap C^0(\R^n)$, and seek for a solution to the original problem
by an adequate change of variables.
This will be achieved in Lemma \ref{lem9} on the basis of the following
result on classical solvability of (\ref{0v}) under these mild hypotheses on the initial data, as obtained
in \cite[Proposition 1.1]{fast_growth1}.
\begin{lem}\label{lem0}
  Suppose that $v_0\in L^\infty(\R^n) \cap C^0(\R^n)$ is positive.
  Then the problem (\ref{0v}) possesses at least one global classical solution $v\in C^0(\R^n\times [0,\infty))
  \cap C^{2,1}(\R^n\times (0,\infty))$ which satisfies 
  \bas
	0<v(x,s)\le \|v_0\|_{L^\infty(\R^n)}	
	\qquad \mbox{ for all $x\in\R^n$ and } s\ge 0.
  \eas
  Moreover, this solution is minimal in the sense that whenever $T\in (0,\infty]$ and
  $\tv \in C^0(\R^n\times [0,T)) \cap C^{2,1}(\R^n\times (0,T))$ are such that $\tv$ is positive and
  solves (\ref{0v}) classically
  in $\R^n\times (0,T)$, we necessarily have $v\le \tv$ in $\R^n\times (0,T)$.\\
\end{lem}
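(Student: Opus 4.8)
This statement is quoted from \cite[Proposition~1.1]{fast_growth1}, so I would regard it as available; nevertheless, let me indicate how I would reconstruct its proof. The natural route is approximation by uniformly parabolic problems on balls. For $R>0$ and $\eps>0$ I would solve the Dirichlet problems
\[
 v_{R,\eps,s}=v_{R,\eps}\Delta v_{R,\eps} \ \text{ in } B_R\times(0,\infty), \qquad v_{R,\eps}=\eps \ \text{ on } \partial B_R, \qquad v_{R,\eps}(\cdot,0)=v_{0,R,\eps},
\]
where $v_{0,R,\eps}\in C^0(\overline{B_R})$ is a positive approximation of $v_0$ with $\eps\le v_{0,R,\eps}\le v_0$ that equals $\eps$ on $\partial B_R$ and coincides with $v_0$ on $B_{R-1}$. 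Since the constant $\eps$ is a stationary solution, the parabolic comparison principle forces $\eps\le v_{R,\eps}\le \|v_0\|_{L^\infty(\R^n)}$; in particular the diffusion coefficient $v_{R,\eps}$ stays bounded away from zero, so the equation is uniformly parabolic and standard quasilinear theory yields a global classical solution.

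The decisive point is to pass to the limit $R\to\infty$, $\eps\searrow0$ without losing positivity, because this is exactly where the degeneracy at $v=0$ threatens to collapse the limit. Here I would introduce an explicit local subsolution: on a ball $B_\rho$ let $X>0$ be the principal Dirichlet eigenfunction, $-\Delta X=\lambda X$, normalised to $\|X\|_{L^\infty}=1$, and set
\[
 \underline{w}(x,s):=\frac{T_0\,X(x)}{1+\lambda T_0 s}, \qquad x\in B_\rho,\ s\ge 0.
\]
With $T(s)=T_0/(1+\lambda T_0 s)$ one has $T'=-\lambda T^2$, and a direct computation gives $\underline{w}_s-\underline{w}\Delta\underline{w}=\lambda T^2(X-1)X\le 0$ since $0\le X\le 1$, so $\underline{w}$ is a positive subsolution vanishing on $\partial B_\rho$ and strictly positive inside for all $s>0$. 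Choosing $T_0$ so small that $\underline{w}(\cdot,0)\le v_0$ on $B_\rho$, the comparison principle (applicable because $v_{R,\eps}$ is positive, hence the equation nondegenerate wherever the two functions could meet) yields $v_{R,\eps}\ge\underline{w}$ on $B_\rho$ for all large $R$ and all $\eps$. As $\rho$ and the centre are arbitrary, this furnishes a positive lower bound for $v_{R,\eps}$ on every compact subset of $\R^n\times(0,\infty)$ that is uniform in the approximation parameters. On such sets the problem is uniformly parabolic, so interior Hölder and Schauder estimates together with a diagonal extraction produce a limit $v\in C^{2,1}$ solving $v_s=v\Delta v$ with $0<v\le\|v_0\|_{L^\infty(\R^n)}$; continuity up to $s=0$ with $v(\cdot,0)=v_0$ then follows from standard barrier arguments near the initial time, using the continuity of $v_0$ and constant comparison functions.

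For the minimality I would arrange the family to be monotone, checking by comparison that $v_{R,\eps}$ is nondecreasing in both $R$ and $\eps$, and take $v:=\lim_{R\to\infty}\lim_{\eps\searrow0}v_{R,\eps}$, which is well defined and positive by the lower bound above. Given any positive classical solution $\tv$ on $\R^n\times(0,T)$ and any $T'<T$, the function $\tv$ is positive and continuous on $\overline{B_R}\times[0,T']$, hence bounded below on $\partial B_R\times[0,T']$ by some $c_R>0$. For every $\eps\le c_R$ one then has $v_{R,\eps}\le\tv$ on the parabolic boundary of $B_R\times(0,T')$---on the lateral part because $v_{R,\eps}=\eps\le c_R\le\tv$, and on the bottom because $v_{0,R,\eps}\le v_0=\tv(\cdot,0)$---so the comparison principle gives $v_{R,\eps}\le\tv$ throughout $B_R\times(0,T')$. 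Letting $\eps\searrow0$, then $R\to\infty$, and finally $T'\nearrow T$ yields $v\le\tv$ in $\R^n\times(0,T)$, as claimed. The main obstacle throughout is precisely the interplay of the degeneracy at $v=0$ with the unboundedness of the domain: the explicit eigenfunction subsolution is what keeps the approximations uniformly positive on compacta and thereby makes the limit a genuine positive classical solution, while the positivity of the competitor $\tv$ is what legitimises every application of the comparison principle in this otherwise degenerate setting.
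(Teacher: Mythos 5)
Your proposal is correct and takes essentially the same route as the paper: the paper does not prove this lemma itself but imports it from \cite[Proposition 1.1]{fast_growth1}, and the scheme you reconstruct --- approximation by uniformly parabolic Dirichlet problems on balls with boundary value $\eps$, monotone limits $\eps\searrow 0$ and $R\nearrow\infty$, eigenfunction-based subsolutions $y(s)\varphi_R(x)$ to keep the approximants uniformly positive on compact sets, and parabolic comparison (in the sense of \cite{wiegner}) for the minimality --- is exactly the construction the paper recalls in Lemma \ref{lem01} and uses repeatedly afterwards. The only sketch-level caveats (the compatibility $\eps\le v_{0,R,\eps}\le v_0$ holds only once $\eps$ is below $\min_{\overline{B_R}}v_0$, and continuity at $s=0$ needs quadratic-in-space barriers rather than constants) do not affect the argument.
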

A convenient additional property of the minimal solution gained above is that it can be approximated
by classical solutions of certain regularized problems in bounded domains.
To make this more precise, throughout the sequel abbreviating $B_R:=B_R(0)$, 
let us fix a family $(v_{0R})_{R>0} \subset C^3(\bar B_R)$ of functions satisfying 
$0<v_{0R}<v_0$ in $B_R$ and $v_{0R}=0$ on $\partial B_R$ as well as
$v_{0R} \nearrow v_0$ in $\R^n$ as $R\nearrow \infty$, and consider the Dirichlet problems
\be{0vR}
	\left\{ \begin{array}{ll}
	v_{Rs}=v_R \Delta v_R, \qquad & x\in B_R, \ s>0, \\[1mm]
	v_R(x,s)=0, \qquad & x\in\partial B_R, \ s>0, \\[1mm]
	v_R(x,0)=v_{0R}(x), \qquad & x\in B_R,
 	\end{array} \right.
\ee
for $R>0$, along with the non-degenerate approximate versions thereof given by
\be{0vReps}
	\left\{ \begin{array}{ll}
	v_{R\eps s}=v_{R \eps} \Delta v_{R\eps}, \qquad & x\in B_R, \ s>0, \\[1mm]
	v_{R\eps}(x,t)=\eps, \qquad & x\in\partial B_R, \ s>0, \\[1mm]
	v_{R\eps}(x,0)=v_{0R\eps}(x), \qquad & x\in B_R,
 	\end{array} \right.
\ee
for $\eps\in (0,1)$,
where we have set
\bas
	v_{0R\eps}:=v_{0R}+\eps.
\eas
We then indeed have the following (\cite[Lemma 2.1]{fast_growth1}):
\begin{lem}\label{lem01}
  Suppose that $v_0\in L^\infty(\R^n) \cap C^0(\R^n)$ is positive.
  Then with $v_{0R}$ and $v_{0\eps R}$ as above, any of the problems (\ref{0vReps}) possesses a global classical
  solution $v_{R\eps} \in C^0(\bar B_R \times [0,\infty)) \cap C^{2,1}(\bar B_R\times (0,\infty))$. 
  As $\eps\searrow 0$, these solutions satisfy $v_{R\eps}\searrow v_R$ with a positive classical solution 
  $v_R\in C^0(\bar B_R \times [0,\infty)) \cap C^{2,1}(B_R\times (0,\infty))$ 
  of (\ref{0vR}), and moreover we have
  \bas
	v_R \nearrow v
	\quad \mbox{in } \R^n\times (0,\infty)
	\qquad \mbox{as } R\nearrow \infty,
  \eas
  where $v$ denotes the minimal classical solution of (\ref{0v}) addressed in Lemma \ref{lem0}
\end{lem}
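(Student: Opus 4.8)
The plan is to realize $v_R$ and $v$ as the two nested monotone limits named in the statement, exploiting that for fixed $R$ and $\eps$ the regularized problem (\ref{0vReps}) is uniformly parabolic. First I would settle global solvability of (\ref{0vReps}): since constants solve $v_s=v\Delta v$, the comparison principle traps any solution between $\eps$ and $\norm[L^\infty(\R^n)]{v_0}+\eps$, so the operator is uniformly parabolic with smooth coefficients; local existence from standard quasilinear parabolic theory then extends to a global smooth solution $v_{R\eps}$ by these a priori bounds and parabolic bootstrapping. Comparing data, $v_{R\eps}$ is nonincreasing as $\eps\searrow 0$ (both the lateral value $\eps$ and the initial value $v_{0R}+\eps$ decrease) and nondecreasing in $R$ at fixed $\eps$ (using $v_{R\eps}\ge\eps$ on the smaller lateral boundary and $v_{0R}\nearrow v_0$), with $v_{R\eps}\le\norm[L^\infty(\R^n)]{v_0}+\eps$, so the pointwise limits $v_R:=\lim_{\eps\searrow 0}v_{R\eps}$ and then $v:=\lim_{R\nearrow\infty}v_R\le\norm[L^\infty(\R^n)]{v_0}$ exist and are nonnegative.

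The decisive point is to promote these pointwise limits to classical solutions, which the degeneracy makes delicate: I need locally uniform positive lower bounds surviving $\eps\searrow 0$. For this I would exhibit an explicit $\eps$-independent subsolution. Fix $\rho<R$, let $m_\rho:=\min_{\overline B_\rho}v_{0R}>0$, and set $\uv(x,s):=A(s)\big(1-\tfrac{|x|^2}{\rho^2}\big)_+$, where $A$ solves $A'=-\tfrac{2n}{\rho^2}A^2$ with $A(0)=m_\rho$, so that $A>0$ on $[0,\infty)$. On $B_\rho\times(0,\infty)$ one checks $\uv_s=\uv\Delta\uv$, while on the lateral boundary $\uv=0<\eps\le v_{R\eps}$ and at $s=0$ one has $\uv\le m_\rho\le v_{0R}\le v_{0R\eps}$; the comparison principle on $B_\rho\times(0,\infty)$ therefore yields $v_{R\eps}\ge\uv$, hence $v_R\ge\uv>0$ in $B_\rho\times(0,\infty)$. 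As $\rho<R$ is arbitrary, $v_R>0$ throughout $B_R\times(0,\infty)$, and on every compact subset of $B_R\times(0,\infty)$ the family $(v_{R\eps})$ obeys $\eps$-uniform two-sided bounds $0<c\le v_{R\eps}\le C$. There the equation is uniformly parabolic, so interior Hölder and Schauder estimates give $\eps$-uniform $C^{2+\alpha,1+\alpha/2}_{loc}$ bounds; a subsequence converges in $C^{2,1}_{loc}(B_R\times(0,\infty))$ and, by monotonicity, the whole family converges to $v_R$, a classical solution of (\ref{0vR}) in the interior. Running the same interior machinery along $R\nearrow\infty$ produces a classical solution $v$ of (\ref{0v}). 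To identify $v$ with the minimal solution $v^{\min}$ of Lemma \ref{lem0}, note on one hand that minimality gives $v^{\min}\le v$, and on the other that $v^{\min}$ restricted to $B_R$ is a positive supersolution of (\ref{0vR}) whose initial data $v_0$ dominate $v_{0R}$ and whose lateral values exceed $0$, so comparison yields $v_R\le v^{\min}$ and thus $v=\lim_R v_R\le v^{\min}$; hence $v=v^{\min}$.

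I expect the genuine obstacle to lie not in the interior analysis, which the subsolution above tames, but in the continuity up to the parabolic boundary asserted by $v_R\in C^0(\overline B_R\times[0,\infty))$ and by $v\in C^0(\R^n\times[0,\infty))$ — that is, the attainment of the zero lateral data on $\partial B_R$ and of the initial data as $s\searrow 0$, precisely where the diffusion degenerates. I would treat this through matching upper and lower barriers adapted to the degenerate operator near $\partial B_R$ and near $s=0$: the subsolution $\uv$ already supplies continuity from below, and I would complement it with supersolutions vanishing on $\partial B_R$ (respectively converging to $v_{0R}$ as $s\searrow 0$), so that squeezing $v_R$ between them yields the claimed global continuity together with the correct boundary and initial values.
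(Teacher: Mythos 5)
Your proposal cannot be measured against an internal argument, because the paper does not prove this lemma at all: it is imported verbatim from \cite[Lemma 2.1]{fast_growth1}, just as Lemma \ref{lem0} is quoted from \cite[Proposition 1.1]{fast_growth1}. What you wrote is therefore a reconstruction of the argument behind that citation, and its core is sound; moreover, it uses exactly the toolkit the paper deploys elsewhere. Trapping $v_{R\eps}$ between the constant solutions $\eps$ and $\|v_0\|_{L^\infty(\R^n)}+\eps$, the two monotonicities in $\eps$ and $R$, and the bootstrap (Krylov--Safonov interior H\"older estimate for the coefficient $a:=v_{R\eps}\in[c,C]$, then Schauder, then full-family convergence of the monotone limit in $C^{2,1}_{loc}$) are all correct. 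Your comparison function $A(s)\bigl(1-|x|^2/\rho^2\bigr)_+$ is in fact an exact solution on $B_\rho\times(0,\infty)$, and up to normalization it is the function $y(\sigma)\varphi_\rho$ built from the parabola profile $\varphi_\rho=(\rho^2-|x|^2)/(2n)$ of Lemma \ref{lem:w} which the paper itself uses in Lemma \ref{lem1} and Lemma \ref{lem:rapid_lowerest}; the comparison steps you need (strictly positive solution versus nonnegative sub- or supersolution of the degenerate equation) are precisely those the paper invokes from \cite{wiegner}.

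Two points in your sketch still need execution, and one of them hides a real subtlety. First, the deferred parabolic-boundary continuity of $v_R$ is indeed routine: since $v_{0R}\in C^3(\bar B_R)$ vanishes on $\partial B_R$, the $\eps$-uniform supersolution $\eps+c\,(R^2-|x|^2)$ with $c$ proportional to the Lipschitz constant of $v_{0R}$ handles the lateral boundary, and $v_{0R\eps}\pm Ks$ with $K$ depending on $\|\Delta v_{0R}\|_{L^\infty(B_R)}$ handles a short initial time interval. Second, and this is the genuine soft spot: these barrier constants depend on $R$, so "running the same machinery" does \emph{not} give the initial trace $v(\cdot,0)=v_0$ of the limit $v=\lim_{R\to\infty}v_R$ — yet you need exactly this continuity \emph{before} you may invoke the minimality clause of Lemma \ref{lem0} to conclude $v^{\min}\le v$, since that clause applies only to positive classical solutions attaining the data $v_0$. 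The clean repair is a reordering of your own steps: first prove $v_R\le v^{\min}$ by your comparison on $B_R$ (ordering on the parabolic boundary uses $\min_{\bar B_R}(v_0-v_{0R})>0$), then squeeze $v_{R_0}\le v\le v^{\min}$ for each fixed $R_0$; both bounds are continuous at $(x,0)$ with values $v_{0R_0}(x)$ and $v_0(x)$, so letting $R_0\to\infty$ yields the initial trace of $v$ with no $R$-uniform barrier at all, after which minimality gives $v^{\min}\le v$ and hence $v=v^{\min}$. With that modification your argument closes completely.
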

For later reference, let us 
note that in the special case when $v_0$ is radially symmetric around the origin and nonincreasing with respect
to $|x|$, we may assume that $v_{0R}$ has the same properties. According to a standard argument based on
the comparison principle applied to (\ref{0vReps}), this entails that in this situation also 
$v_{R\eps}(\cdot,s)$ and $v_R(\cdot,s)$ are radially symmetric and nonincreasing with respect to $|x| \in [0,R]$
for each $s>0$.
\subsection{Temporally uniform spatial decay}
The goal of this section is to make sure that the minimal solution of (\ref{0v}) will inherit an initially present
spatial decay in the following temporally uniform sense:
\begin{lem}\label{lem1}
  Suppose that $v_0\in C^0(\R^n)$ is positive and such that
  \be{1.1}
	v_0(x) \to 0
	\qquad \mbox{as } |x|\to\infty.
  \ee
  Then the minimal solution $v$ of (\ref{0v}) satisfies $v(\cdot,s) \to 0$ as $|x|\to\infty$, uniformly with respect to
  $s\ge 0$; that is, we have
  \be{1.2}
	\|v\|_{L^\infty((\R^n\setminus B_R) \times (0,\infty))} \to 0
	\qquad \mbox{as } R\to\infty.
  \ee
\end{lem}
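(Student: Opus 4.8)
The plan is to deduce (\ref{1.2}) from a comparison argument run on the monotone approximation provided by Lemma \ref{lem01}. Since $v_{R\eps}\searrow v_R$ as $\eps\searrow0$ and $v_R\nearrow v$ as $R\nearrow\infty$, it is enough to control the approximants: if for each $\eta>0$ I can exhibit $R_1>0$, \emph{independent of $R$ and $\eps$}, such that $v_{R\eps}(x,s)\le\eta$ whenever $|x|\ge R_1$, $s>0$ and $R>R_1$, then passing to the limits $\eps\searrow0$ and $R\nearrow\infty$ gives $v\le\eta$ on $(\R^n\setminus B_{R_1})\times(0,\infty)$, which is exactly (\ref{1.2}). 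Writing $M:=\|v_0\|_{L^\infty(\R^n)}$, I note first that comparison with the spatially constant solution $M+\eps$ in (\ref{0vReps}) yields $0<v_{R\eps}\le M+\eps$; next, using (\ref{1.1}), I fix $R_0>0$ so large that $v_0\le\eta/4$ on $\R^n\setminus B_{R_0}$, and I restrict attention to $\eps\in(0,\eta/4)$, which controls the initial data of the approximants on this set.

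The core step is to build a supersolution $\overline v$ of $v_s=v\Delta v$ on the annulus $\{R_0<|x|<R\}$ dominating $v_{R\eps}$ on the parabolic boundary (value $\ge M+\eps$ on $\{|x|=R_0\}$, value $\ge\eps$ on $\{|x|=R\}$, and $\ge v_{0R\eps}$ initially) while being uniformly small, $\overline v\le\eta$, once $|x|\ge R_1$. Because any positive, time-independent superharmonic function trivially satisfies $\overline v_s=0\ge\overline v\Delta\overline v$, the natural candidate in dimension $n\ge3$ is the harmonic profile
\[
\overline v(x):=\frac{\eta}{2}+\Big(M+1-\frac{\eta}{2}\Big)\Big(\frac{R_0}{|x|}\Big)^{n-2},
\]
which equals $M+1>M+\eps$ on $\{|x|=R_0\}$, stays above $\eta/2$ throughout (so the lateral and initial orderings hold for $\eps<\eta/4$), and decreases to $\eta/2$ as $|x|\to\infty$, hence drops below $\eta$ for $|x|\ge R_1$ with $R_1=R_1(\eta,R_0,M,n)$. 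Since (\ref{0vReps}) is nondegenerate near the (positive) solution, the classical parabolic comparison principle gives $v_{R\eps}\le\overline v$ on the annulus, and the limiting procedure above concludes the argument in this regime.

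The hard part is the low-dimensional case $n\in\{1,2\}$, where no decaying positive superharmonic barrier exists: a radial superharmonic $\overline v$ on an exterior domain obeys $(r^{n-1}\overline v')'=r^{n-1}\Delta\overline v\le0$, so once $\overline v'$ is negative (as it must be, the barrier falling from above $M$ to below $\eta$) one has $r^{n-1}\overline v'(r)\le-k<0$ for $r$ large, whence $\overline v(r)\le\overline v(R_0)-k\int_{R_0}^r\rho^{1-n}\,d\rho\to-\infty$, contradicting positivity. Thus the stationary strategy cannot yield uniform-in-time smallness, and I would replace it by a time-dependent supersolution whose construction exploits the degeneracy: where $v$ is small the effective diffusivity $v$ in $v_s=v\Delta v$ is tiny, which should prevent the influence of $\{|x|=R_0\}$ from ``filling up'' the far field in infinite time, as it would for a uniformly parabolic equation. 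Concretely I would look for $\overline v(x,s)=\tfrac{\eta}{2}+M\,\Psi(|x|,s)$ with $\Psi\in[0,1]$, $\Psi(R_0,\cdot)\equiv1$ and $\Psi(r,s)\to0$ as $r\to\infty$ uniformly in $s$, and verify the supersolution inequality $\Psi_s\ge\overline v\,(\Psi_{rr}+\tfrac{n-1}{r}\Psi_r)$ using the smallness of $\overline v$ in the transition zone. Producing a profile $\Psi$ that is simultaneously a supersolution and uniformly small in time is the delicate heart of the matter; I expect it to require a spreading-front ansatz in a self-similar variable adapted to the degenerate diffusion, possibly supplemented by a priori control such as the (formal) monotonicity $\tfrac{d}{ds}\int_{\R^n}v\le0$ whenever this quantity is finite.
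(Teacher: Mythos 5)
Your argument for $n\ge 3$ is correct, and it is genuinely different from (and more elementary than) the paper's proof: the stationary harmonic barrier $\frac{\eta}{2}+(M+1-\frac{\eta}{2})\big(R_0/|x|\big)^{n-2}$, combined with comparison in the non-degenerate problems (\ref{0vReps}) (legitimate here, since $v_{R\eps}\ge\eps$ and the barrier is bounded below by $\frac{\eta}{2}$, cf.\ \cite{wiegner}) and the monotone limits of Lemma \ref{lem01}, yields (\ref{1.2}) in that case with no input on the large-time behavior of $v$. But Lemma \ref{lem1} is claimed for all $n\ge 1$, and for $n\in\{1,2\}$ you do not give a proof: you only write down an ansatz $\overline v=\frac{\eta}{2}+M\Psi$ and yourself call the construction of $\Psi$ ``the delicate heart of the matter''. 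This gap is not merely technical; it cannot be closed along the lines you propose, because the barrier you are hoping for does not exist in low dimensions. By your own specification it must satisfy $\overline v\ge M$ on $\{|x|=R_0\}$ for \emph{all} times, dominate a fixed positive initial datum ($\ge v_0$ on the exterior region, since it must lie above $v_{0R}+\eps$ for every large $R$), and be a supersolution; by comparison it then dominates the solution $W$ of the exterior problem on $\{|x|>R_0\}$ with persistent lateral data $M$ and positive initial data $W_0\le v_0$. In $n\in\{1,2\}$ such a $W$ fills up: choose $v_0$ decaying at exactly exponential rate (data of central interest in this paper) and take radial $W_0\le v_0$ with $W_0\Delta W_0\ge 0$, e.g.\ a small multiple of $e^{-|x|}$, noting $\Delta e^{-|x|}\ge0$ for $|x|\ge n-1$; then $W_0$ is a stationary subsolution, so $W$ is nondecreasing in time and bounded by $M$, and its monotone limit is a bounded positive radial solution of $w\Delta w=0$ on the exterior domain attaining $M$ at $|x|=R_0$, hence of the form $a+b(|x|-R_0)$ ($n=1$) or $a+b\ln(|x|/R_0)$ ($n=2$), which positivity and boundedness force to be the constant $M$. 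Thus $\sup_{s>0}W(x,s)=M$ at \emph{every} $x$, so no supersolution dominating $W$ can stay below $\eta<M$ far out uniformly in time. Your guiding intuition --- that the degeneracy prevents the boundary influence from filling up the far field in infinite time --- is exactly what fails: the degeneracy blocks propagation only into regions where the solution vanishes identically, whereas through a positive (however small) tail the influence does spread, slowly but, over infinite time, completely. This is the parabolic counterpart of the transience/recurrence dichotomy that already destroyed your stationary barrier.

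What is missing is therefore some control of the solution's values in the inner region that \emph{improves in time}; a purely spatial barrier with non-decaying inner data cannot work when $n\le 2$. This is precisely what the paper's proof supplies: it first reduces to radially symmetric nonincreasing data (replacing $v_0(x)$ by $\|v_0\|_{L^\infty(\R^n\setminus B_{|x|})}$ and using minimality plus comparison), then argues by contradiction using the known global decay $\|v(\cdot,s)\|_{L^\infty(\R^n)}\to 0$ as $s\to\infty$ (Lemma \ref{lem03}, quoted from \cite{win_cauchy}). A failure of (\ref{1.2}) along times $s_k\to\infty$ contradicts this decay outright; along bounded times it produces, via radial monotonicity and continuity, a uniform lower bound $v(\cdot,s_2)\ge\delta_0$ on all of $\R^n$, which then persists for all later times by comparison with the eigenfunction subsolutions $y_R(s)\varphi_R(x)$ on balls $B_R(x_0)$ (using $\lambda_R\to0$ as $R\to\infty$) --- again contradicting Lemma \ref{lem03}. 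Any repair of your approach in $n\in\{1,2\}$ would have to import a temporal decay statement of this type, at which point you are essentially reconstructing the paper's argument.
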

In order to prepare a proof of this, let us recall the following implication of (\ref{1.1}) on the large time behavior
of $v$, as derived in \cite[Thm. 3.1]{win_cauchy}.
\begin{lem}\label{lem03}
  Assume that the positive function $v_0\in C^0(\R^n)$ satisfies
  $v_0(x) \to 0$ as $|x|\to\infty$.
  Then the minimal solution $v$ of (\ref{0v}) has the property that
  \bas
	\|v(\cdot,s)\|_{L^\infty(\R^n)} \to 0
	\qquad \mbox{as } s\to\infty.
  \eas
\end{lem}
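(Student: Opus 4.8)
The plan is to recast \eqref{0v} in a form that exhibits $v$ as a subsolution of a porous medium equation, for which sup-norm decay under the hypothesis $v_0(x)\to 0$ is classical. Writing the nonlinearity as $v\Delta v=\tfrac12\Delta(v^2)-|\nabla v|^2$, the equation in \eqref{0v} reads
\[
	v_s=\tfrac12\Delta(v^2)-|\nabla v|^2\le \tfrac12\Delta(v^2),
\]
so that $v$ is a subsolution of the porous medium equation $V_s=\tfrac12\Delta(V^2)$ (the case $m=2$). The degenerate absorption term $-|\nabla v|^2$, which is precisely the mechanism responsible for the energy dissipation underlying \eqref{energy}, can only help decay and is simply discarded here.

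First I would make this comparison rigorous at the level of the approximations from Lemma \ref{lem01}. Each nondegenerate $v_{R\eps}$ solves $v_{R\eps s}=\tfrac12\Delta(v_{R\eps}^2)-|\nabla v_{R\eps}|^2\le\tfrac12\Delta(v_{R\eps}^2)$ in $B_R$ with $v_{R\eps}=\eps$ on $\partial B_R$ and $v_{R\eps}(\cdot,0)=v_{0R}+\eps$; comparing with the solution $V_{R\eps}$ of the porous medium Dirichlet problem carrying the same boundary and initial data yields $v_{R\eps}\le V_{R\eps}$ on $B_R\times(0,\infty)$. Letting first $\eps\searrow 0$ and then $R\nearrow\infty$, and using the monotone convergences $v_{R\eps}\searrow v_R\nearrow v$ together with the corresponding convergence of the $V_{R\eps}$ to the porous medium Cauchy solution $V$ emanating from $v_0$, I obtain $0<v\le V$ in $\R^n\times(0,\infty)$. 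It then remains to show $\|V(\cdot,s)\|_{L^\infty(\R^n)}\to 0$. Here I would invoke the decay theory for the porous medium equation (see \cite{vazquez_book}): since $V$ is a bounded nonnegative solution whose initial data $v_0$ vanishes at infinity, its spatial supremum tends to $0$ as $s\to\infty$, whence the same follows for $v$.

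A convenient self-contained route, which also clarifies where the hypothesis $v_0\to 0$ enters, is to fix $\delta>0$, choose $R_0$ with $v_0\le\delta$ outside $B_{R_0}$, and split $v_0\le\delta+(v_0-\delta)_+$ into a constant background and a compactly supported, hence finite-mass, excess. Dominating the excess by a spreading Barenblatt profile, whose sup-norm decays at the rate $s^{-n/(n+2)}$, and combining this with the background level $\delta$ via the porous medium comparison principle, one arrives at $\limsup_{s\to\infty}\|V(\cdot,s)\|_{L^\infty(\R^n)}\le\delta$; letting $\delta\searrow 0$ then finishes the proof.

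The main obstacle is exactly this last splitting: because constants are stationary for the porous medium equation and the equation is nonlinear, the background $\delta$ and the excess bump cannot simply be superposed, and a single compactly-profiled Barenblatt solution cannot dominate slowly decaying data. The standard resolution, namely that near the support of the excess the equation, once shifted by the background, is uniformly parabolic, while far out $v$ already lies below $\delta$, is technical and constitutes the genuine content of the cited decay theorem. A streamlining that I would keep in reserve is to first reduce, by the monotone dependence of the minimal solution on its data (provable through the comparison principle for the nondegenerate problems \eqref{0vReps}), to radially symmetric nonincreasing $v_0$; then $\|v(\cdot,s)\|_{L^\infty(\R^n)}=v(0,s)$ and, since $\Delta v(0,s)\le 0$ at the spatial maximum, $s\mapsto v(0,s)$ is nonincreasing, so that only the vanishing of its limit must be ruled out.
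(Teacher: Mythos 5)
Your proposal should first be set against the fact that the paper does not prove this lemma at all: it simply quotes it from \cite[Thm.~3.1]{win_cauchy}, which asserts sup-norm decay for data vanishing at infinity for exactly the equation at hand. Measured as an independent proof, your reduction step is sound: the identity $v\Delta v=\tfrac12\Delta(v^2)-|\nabla v|^2$ does exhibit $v$ as a subsolution of $V_s=\tfrac12\Delta(V^2)$, the comparison at the level of the approximations $v_{R\eps}$ is legitimate (both $v_{R\eps}$ and its porous-medium counterpart are bounded below by $\eps$, by comparison with the constant solution $\eps$, so the equations are uniformly parabolic in the relevant range and classical comparison applies), and the monotone limits $\eps\searrow 0$, $R\nearrow\infty$ yield $v\le V$ by standard theory. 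Your reserve reduction to radially nonincreasing data via monotone dependence of the minimal solution is also consonant with the paper's technique (it is precisely the device used in the proof of Lemma \ref{lem1}), but, as you note yourself, it only gives monotonicity of $s\mapsto\|v(\cdot,s)\|_{L^\infty(\R^n)}$, not vanishing of the limit.

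The genuine gap is the decisive final step. The decay theory you invoke from \cite{vazquez_book} consists of $L^p$--$L^\infty$ smoothing estimates requiring $v_0\in L^p(\R^n)$ for some finite $p$; here $v_0$ need only vanish at infinity and may decay arbitrarily slowly (say like $1/\ln\ln|x|$), hence lie in no such $L^p$, so the cited reference does not contain the statement you need. Your sketched remedy fails as written, for the reason you partly identify: the equation is not superposable, and in fact $\delta+B$, with $B$ a Barenblatt solution, is not even a weak supersolution of the PME --- supersolubility would require $\delta\,\Delta B\le 0$, whereas the distributional Laplacian of $B$ carries a \emph{nonnegative} singular measure on the free boundary (the radial slope jumps upward there), so the inequality fails exactly where comparison must be pushed through. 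Since you then declare the resolution of this obstacle to be ``the genuine content of the cited decay theorem,'' the crux of the lemma is assumed rather than proved. A correct completion along your lines would be: compare $V$ with the solution $V^{(\delta)}$ emanating from $\max(v_0,\delta)$; then $V^{(\delta)}\ge\delta$ for all times, and $W:=V^{(\delta)}-\delta\ge 0$ solves the filtration equation $W_s=\tfrac12\Delta\big(W^2+2\delta W\big)$ with compactly supported, hence integrable, initial data $(v_0-\delta)_+$; since $\Phi(r)=\tfrac12 r^2+\delta r$ satisfies $\Phi'\ge\delta>0$, this equation is uniformly parabolic and enjoys an $L^1$--$L^\infty$ smoothing estimate, giving $\|W(\cdot,s)\|_{L^\infty(\R^n)}\to 0$ and thus $\limsup_{s\to\infty}\|V(\cdot,s)\|_{L^\infty(\R^n)}\le\delta$ for every $\delta>0$. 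Even so, the detour through the PME is unnecessary: \cite[Thm.~3.1]{win_cauchy} applies verbatim to $v_s=v\Delta v$, which is why the paper cites it directly.
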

We will rely on this lemma in the following\\
\proofc of Lemma \ref{lem1}. \quad
  For $x\in\R^n$ replacing $v_0(x)$ with $\ov_0(x):=\|v_0\|_{L^\infty(\R^n\setminus B_{|x|})}$ if necessary, 
  on recalling the comment following Lemma \ref{lem01} we may assume that $v_0$ is radially
  symmetric and nonincreasing with respect to $|x|\ge 0$, and that for each $R>0$ also $v_{0R}$ and
  $v_R(\cdot,s)$ are radial and nonincreasing in $|x|\in [0,R]$ for $s>0$.\\
%Should we mention that we apply the comparison theorem here?
%
  Now if (\ref{1.2}) was false, there would exist $\delta_0>0$, $(R_k)_{k\in\N} \subset (0,\infty)$ and
  $(s_k)_{k\in\N} \subset (0,\infty)$ such that $R_k\to\infty$ as $k\to\infty$ and
  \bas
	\|v(\cdot,s_k)\|_{L^\infty(\R^n \setminus B_{R_k})} \ge \delta_0
	\qquad \mbox{for all } k\in\N.
  \eas
  Here in the case when $s_k\to\infty$ as $k\to\infty$, invoking Lemma \ref{lem03} we would see that
  \bas
	\delta_0 \le \|v(\cdot,s_k)\|_{L^\infty(\R^n \setminus B_{R_k})}
	\le \|v(\cdot,s_k)\|_{L^\infty(\R^n)} 
	\to 0
	\qquad \mbox{as } k\to\infty,
  \eas
  which is absurd.
  We are thus left with the case when $(s_k)_{k\in\N}$ has a finite point of accumulation, in which on passing to a
  subsequence we may assume that there exists $s_2\in [0,\infty)$ such that as $k\to\infty$ we have $s_k\to s_2$.
  We first claim that then
  \be{1.3}
	v(\cdot,s_2) \ge \delta_0
	\qquad \mbox{in } \R^n.
  \ee
  In fact, given any $x_0\in\R^n$ we may use that $R_k\to\infty$ as $k\to\infty$ to fix $k_0\in\N$ such that
  $R_k\ge |x_0|$ for all $k\ge k_0$. As $v(\cdot,s_k)$ is nonincreasing with respect to $|x|\ge 0$,
  we thus obtain that
  \bas
	v(x_0,s_k) \ge \|v(\cdot,s_k)\|_{L^\infty(\R^n \setminus B_{R_k})} \ge \delta_0
	\qquad \mbox{for all } k\ge k_0,
  \eas
  and that since by continuity of $v$ we have $v(x_0,s_k)\to v(x_0,s_2)$ as $k\to\infty$, therefore indeed
  $v(x_0,s_2)\ge \delta_0$.\abs
  Having thereby verified (\ref{1.3}), we proceed to show that actually
  \be{1.4}
	v\ge \delta_0
	\qquad \mbox{in } \R^n \times (s_2,\infty).
  \ee
  To see this, for fixed $x_0\in\R^n$ and $R>0$ we let $\varphi_R$ and $\lambda_R$ denote the first Dirichlet
  eigenfunction of $-\Delta$ in $B_R(x_0)$, normalized such that $\max_{x\in\bar B_R(x_0)} \varphi_R(x)=\varphi_R(x_0)=1$,
  and the associated principal eigenvalue $\lambda_R>0$.
  For $\delta\in (0,\delta_0)$, we then define $y_R$ to be the solution of the initial-value problem
  \be{1.5}
	\left\{ \begin{array}{l}
	y_R'(s)=-\lambda_R y_R^2(s), \qquad s>s_2, \\[1mm]
	y_R(s_2)=\delta,
	\end{array} \right.
  \ee
  that is, we let
  \bas
	y_R(s):=\frac{\delta}{1+\lambda_R \delta (s-s_2)}, \qquad s\ge s_2,
  \eas
  and thereafter we introduce a comparison function $\uv$ by writing
  \bas
	\uv(x,s):=y_R(s) \varphi_R(x),
	\qquad x\in\bar B_R(x_0), \ s\ge s_2.
  \eas
  Then clearly $\uv(x,s)=0<v(x,s)$ for all $x\in\partial B_R(x_0)$ and $s\ge s_2$, whereas (\ref{1.3}) ensures that
  also $\uv(x,s_2)\le y_R(s_2)=\delta < v(x,s_2)$ for all $x\in\bar B_R(x_0)$.
  As furthermore the definition of $\varphi_R$ and (\ref{1.5}) guarantee that
  \bas
	\uv_s - \uv \Delta \uv
	&=& y_R' \varphi_R - y_R^2 \varphi_R \Delta \varphi_R \\
	&=& y_R' \varphi_R + \lambda_R y_R^2 \varphi_R^2 \\
	&\le& y_R' \varphi_R + \lambda_R y_R^2 \varphi_R \\[1mm]%\varphi^2 \le \varphi, as \varphi\le 1 by def.
	&=& 0
	\qquad \mbox{in } B_R(x_0) \times (s_2,\infty),
  \eas
  an appropriate comparison principle (\cite{wiegner}) shows that $v\ge \uv$ in $\bar B_R(x_0)\times [s_2,\infty)$, 
  whence in particular
  \bas
	v(x_0,s) \ge \uv(x_0,s) = y_R(s) \varphi_R(x_0)
	= \frac{\delta}{1+\lambda_R \delta (s-s_2)}
	\qquad \mbox{for all } s>s_2.
  \eas
  Taking $\delta\nearrow \delta_0$ and using that $\lambda_R\to 0$ as $R\to\infty$, we thereby readily obtain that
  $v(x_0,s) \ge \delta_0$ for all $s> s_2$. This establishes (\ref{1.4}) and therefore evidently contradicts the
  outcome of Lemma \ref{lem03} also in this case.
\qed
\subsection{Time evolution of $L^p$ seminorms}
For arbitrary $p>0$,
let us next provide a rigorous counterpart of the identity
\bas
	\frac{d}{ds} \ir v^p = - p^2 \ir v^{p-1} |\nabla v|^2,
\eas
as formally obtained on testing (\ref{0v}) by $v^{p-1}$, which we state in a form that avoids time derivatives
which due to the lack of appropriate regularity assumptions on $v$ might not exist.
The correspondingly obtained result (\ref{5.1}) will frequently be applied in the sequel, inter alia 
to the value $p:=1$ for establishing 
the desired mass conservation property of our solution $u$ to the original problem (\ref{0}),
but also e.g.~to
$p:=2$ (Lemmata \ref{lem7}, \ref{lem9}) and to certain $p\in (0,1)$
(Lemma \ref{lem8}).
\begin{lem}\label{lem5}
  Assume that with some $p>0$, the positive function $v_0$ belongs to $L^p(\R^n) \cap L^\infty(\R^n) \cap C^0(\R^n)$,
 and suppose that $v$ is a bounded positive classical solution of (\ref{0v}) in $\R^n\times (0,T)$ for some 
  $T \in (0,\infty]$.
  Then $v(\cdot,s) \in L^p(\R^n)$ for all $s\in (0,T)$ and $v^{p-1} |\nabla v|^2 \in L^1(\R^n\times (0,T))$,
  and moreover we have the identity
  \be{5.1}
	\ir v^p(\cdot,s)
	+ p^2 \int_0^s \ir v^{p-1} |\nabla v|^2
	= \ir v_0^p
	\qquad \mbox{for all } s\in (0,T).
  \ee
\end{lem}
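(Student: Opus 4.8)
The plan is to establish a localized version of the identity by testing the equation against $v^{p-1}$ times a spatial cutoff, and then to remove both the cutoff and the initial-time singularity by monotone convergence. The principal difficulty, which I treat first, is that the error term produced by the cutoff is controlled by $\ir v^{p+1}$ over dyadic annuli, an expression that boundedness of $v$ alone cannot tame and which a priori threatens to diverge; circumventing this is the heart of the argument. I would fix $0<s<T$ and take $\zeta_R(x)=\psi(|x|/R)$ with $\psi\in C^\infty$, $\psi\equiv1$ on $[0,1]$ and $\supp\psi\subset[0,2)$, so that $|\nabla\zeta_R|^2+|\zeta_R\Delta\zeta_R|\le C/R^2$ with $\supp\nabla\zeta_R\subset B_{2R}\setminus B_R$. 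Since $v$ is a classical solution on $(0,T)$, multiplying $v_s=v\Delta v$ by $p\,v^{p-1}\zeta_R^2$ and integrating by parts on the compact support yields, for $0<s_0<s$,
\[
 \ir v^p(\cdot,s)\zeta_R^2 + p^2\int_{s_0}^s\ir v^{p-1}\zeta_R^2|\nabla v|^2 = \ir v^p(\cdot,s_0)\zeta_R^2 + \frac{2p}{p+1}\int_{s_0}^s\ir v^{p+1}\big(|\nabla\zeta_R|^2+\zeta_R\Delta\zeta_R\big),
\]
where I rewrote $v^p\nabla\zeta_R\cdot\nabla v=\frac{1}{p+1}\nabla(v^{p+1})\cdot\nabla\zeta_R$ and integrated by parts once more to produce the last term. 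As $v\in C^0(\R^n\times[0,T))$ and all integrands are bounded on $\Bbar_{2R}$, letting $s_0\searrow0$ (dominated convergence for the boundary and error terms, monotone convergence for the nonnegative dissipation term) shows the identity persists with lower limit $0$, and in particular that $\int_0^s\ir v^{p-1}\zeta_R^2|\nabla v|^2<\infty$.

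The key step is then a uniform $L^p$ bound. Dropping the dissipation term and using $v^{p+1}\le\norm[L^\infty]{v}\,v^p$ together with the support and size of $\nabla\zeta_R$, the identity gives, for every terminal time $\sigma\in(0,s]$,
\[
 \int_{B_R}v^p(\cdot,\sigma) \le \ir v_0^p + \frac{C\norm[L^\infty]{v}}{R^2}\int_0^s\int_{B_{2R}\setminus B_R}v^p.
\]
Writing $M:=\ir v_0^p$ and $\Psi(R):=\sup_{\sigma\in[0,s]}\int_{B_R}v^p(\cdot,\sigma)$, which is finite since $v\le\norm[L^\infty]{v}$ forces $\Psi(R)\le\norm[L^\infty]{v}^p|B_R|$, this reads $\Psi(R)\le M + \frac{B}{R^2}\Psi(2R)$ with $B:=C\norm[L^\infty]{v}\,s$. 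Iterating over the radii $2^jR$ and noting that the product of coefficients $\prod_{j=0}^{m-1}B(2^jR)^{-2}=B^m R^{-2m}2^{-m(m-1)}$ decays superexponentially, while the crude bound $\Psi(2^mR)\le\norm[L^\infty]{v}^p|B_{2^mR}|$ grows only like $2^{mn}$, the remainder after $m$ steps tends to $0$ as $m\to\infty$. Hence $\Psi(R)\le M\sum_{k\ge0}B^kR^{-2k}2^{-k(k-1)}$, a convergent series which for large $R$ is at most $2M$. Letting $R\to\infty$ by monotone convergence proves $v(\cdot,\sigma)\in L^p(\R^n)$ with $\int v^p(\cdot,\sigma)\le2M$ uniformly on $[0,s]$, and in particular $\int_0^s\ir v^p<\infty$.

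With space--time integrability of $v^p$ secured, the error term vanishes in the limit, being $R^{-2}$ times the tail of a convergent integral: $\frac{C\norm[L^\infty]{v}}{R^2}\int_0^s\int_{B_{2R}\setminus B_R}v^p\to0$ as $R\to\infty$. Sending $R\to\infty$ in the identity with lower limit $0$ and applying monotone convergence to the three remaining terms, since $\zeta_R^2\nearrow1$, yields
\[
 \ir v^p(\cdot,s)+p^2\int_0^s\ir v^{p-1}|\nabla v|^2=\ir v_0^p,
\]
which is exactly (\ref{5.1}). Finally, this identity gives $p^2\int_0^s\ir v^{p-1}|\nabla v|^2=\ir v_0^p-\ir v^p(\cdot,s)\le\ir v_0^p$ uniformly in $s<T$, so a last monotone passage $s\nearrow T$ shows $v^{p-1}|\nabla v|^2\in L^1(\R^n\times(0,T))$. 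As flagged, the main obstacle is precisely this uniform $L^p$ bound, i.e. preventing the annular error term from feeding back uncontrollably; the radial iteration resolves it by trading the $R^{-2}$ gain against the at most polynomial growth of the crude mass.
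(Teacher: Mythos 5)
Your proof is correct, and it reaches (\ref{5.1}) by a genuinely different mechanism from the paper's. The paper tests against $\zeta_R^2\varphi_\alpha v^{p-1}$ with the integrable exponential weight $\varphi_\alpha(x)=e^{-\alpha|x|}$, whose key property $|\nabla\varphi_\alpha|^2/\varphi_\alpha^2\equiv\alpha^2$ turns the weight-induced error into a Gronwall term $c_1\alpha^2 y_{\alpha R}$; after a Young inequality with parameter $\eta$ one integrates the resulting differential inequality, lets $R\to\infty$ (the cutoff error vanishes because $\varphi_\alpha$ is integrable over far annuli), then $\alpha\searrow 0$, then $\eta\searrow 0$, and the two inequalities constituting (\ref{5.1}) are established separately. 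You dispense with the weight entirely: integrating by parts a second time, so that all derivatives land on the cutoff, preserves an exact identity (no Young inequality, no separate upper and lower bounds), and the annular error term, bounded by $C R^{-2}\|v\|_{L^\infty}\int_0^s\int_{B_{2R}\setminus B_R}v^p$, is defeated by the dyadic self-improvement $\Psi(R)\le M+BR^{-2}\Psi(2R)$: the iterated coefficients $B^m R^{-2m}2^{-m(m-1)}$ decay superexponentially in $m$ and hence beat the crude volume bound $\Psi(2^mR)\le\|v\|_{L^\infty}^p|B_1|(2^mR)^n$, giving the uniform estimate $\Psi(R)\le 2M$ for large $R$, hence $v^p\in L^1(\R^n\times(0,s))$, after which the error is $o(1)$ as $R\to\infty$ and the remaining limits are monotone. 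Both arguments rely on the boundedness of $v$ to reduce $v^{p+1}$ to $v^p$. The paper's route needs no iteration and only a piecewise linear cutoff, at the price of a two-parameter limiting procedure and a two-sided argument; your route keeps a single parameter, yields the identity directly, and makes explicit the quantitative reason the far field cannot feed back, namely the $R^{-2}$ gain against the $R^n$ volume growth. One detail worth stating: choose the profile $\psi$ nonincreasing, so that $\zeta_R^2\nearrow 1$ as $R\nearrow\infty$, which your monotone-convergence steps implicitly use.
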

\begin{proof}
%\proof
  We first claim that
  \be{5.2}
	\ir v^p(\cdot,s)
	+ p^2 \int_0^s \ir v^{p-1} |\nabla v|^2
	\le \ir v_0^p
	\qquad \mbox{for all } s\in (0,T).
  \ee
  To establish this inequality, given $\alpha>0$ and $R>0$ we introduce $\varphi_\alpha(x):=e^{-\alpha |x|}$
  for $x\in\R^n$ as well as
  \bas
	\zeta_R(x):=\left\{ \begin{array}{ll}
	1 \qquad & \mbox{if } x\in B_R, \\[1mm]
	R+1-|x| \qquad & \mbox{if } x\in B_{R+1}\setminus B_R, \\[1mm]
	0 \qquad & \mbox{if } x\in \R^n\setminus B_{R+1}, 
	\end{array} \right.
  \eas
  and note that then both $\varphi_\alpha$ and $\zeta_R$ belong to $W^{1,\infty}(\R^n)$.
  By positivity of $v$, we may thus test (\ref{0v}) against $\zeta_R^2 \varphi_\alpha v^{p-1}$ to see that since
  $\zeta_R$ has compact support in $\R^n$, we have
  \be{5.3}
	\frac{d}{ds} \ir \zeta_R^2 \varphi_\alpha v^p
	+ p^2 \ir \zeta_R^2 \varphi_\alpha v^{p-1} |\nabla v|^2
	= - p\ir v^p \nabla v \cdot \nabla (\zeta_R^2 \varphi_\alpha)
	\qquad \mbox{for all } s\in (0,T).
  \ee
  For fixed $\eta\in (0,1)$, by means of Young's inequality we find that herein
  \be{5.33}
	\bigg| - p\ir v^p\nabla v \cdot \nabla (\zeta_R^2 \varphi_\alpha) \bigg|
	\le \eta p^2 \ir \zeta_R^2 \varphi_\alpha v^{p-1} |\nabla v|^2
	+ \frac{1}{4\eta} \ir \frac{|\nabla (\zeta_R^2 \varphi_\alpha)|^2}{\zeta_R^2 \varphi_\alpha} \cdot v^{p+1}
	\qquad \mbox{for all } s\in (0,T),
  \ee
  where we can estimate
  \bea{5.34}
	\Big| \nabla (\zeta_R^2 \varphi_\alpha) \Big|^2
	&=& \Big| 2\zeta_R\varphi_\alpha \nabla \zeta_R + \zeta_R^2 \nabla\varphi_\alpha \Big|^2 \nn\\
	&\le& 8\zeta_R^2 \varphi_\alpha^2 |\nabla \zeta_R|^2 + 2\zeta_R^4 |\nabla\varphi_\alpha|^2
	\qquad \mbox{a.e.~in } \R^n
  \eea
  and hence
  \be{5.4}
	\frac{1}{4\eta}\ir \frac{|\nabla (\zeta_R^2 \varphi_\alpha)|^2}{\zeta_R^2 \varphi_\alpha} \cdot v^{p+1}
	\le \frac{2}{\eta} \io |\nabla \zeta_R|^2 \varphi_\alpha v^{p+1}
	+ \frac{1}{2\eta} \io \zeta_R^2 \cdot \frac{|\nabla\varphi_\alpha|^2}{\varphi_\alpha} \cdot v^{p+1}
	\qquad \mbox{for all } s\in (0,T).
  \ee
  Recalling that $M:=\|v\|_{L^\infty(\R^n\times (0,T))}$ is finite by assumption, since $|\nabla \zeta_R|\le 1$
  a.e.~in $\R^n$ and $\supp \nabla\zeta_R \subset \bar B_{R+1}\setminus B_R$ we see that
  \be{5.5}
	\frac{2}{\eta} \io |\nabla \zeta_R|^2 \varphi_\alpha v^{p+1}
	\le \delta_{\eta\alpha R} := \frac{2M^{p+1}}{\eta} \int_{B_{R+1}\setminus B_R} \varphi_\alpha
	\qquad \mbox{for all } s\in (0,T),
  \ee
  where evidently for each fixed $\eta \in (0,1)$ and $\alpha>0$ we have
  \be{5.55}
	\delta_{\eta\alpha R} \to 0
	\qquad \mbox{as } R\to\infty.
  \ee
  In estimating the second summand on the right of (\ref{5.4}) we use that 
  $\frac{|\nabla\varphi_\alpha|^2}{\varphi_\alpha^2}=\alpha^2$ in $\R^n\setminus \{0\}$ to obtain
  \be{5.56}
	\frac{1}{2\eta} \io \zeta_R^2 \cdot \frac{|\nabla\varphi_\alpha|^2}{\varphi_\alpha} \cdot v^{p+1}
	\le \frac{M\alpha^2}{2\eta} \ir \zeta_R^2 \phii_\alpha v^p
	\qquad \mbox{for all } s\in (0,T).
  \ee
  Combined with (\ref{5.3})-(\ref{5.5}), this shows that for
  $y_{\alpha R}(s):=\ir \zeta_R^2 \varphi_\alpha v^p(\cdot,s), \ s\in [0,T)$,
  and $f_{\alpha R}(s):= p^2 \ir \zeta_R^2 \varphi_\alpha v^{p-1}(\cdot,s)|\nabla v(\cdot,s)|^2, \ s\in (0,T)$, writing $c_1:=\frac{M}{2\eta}$ we have
  \be{5.6}
	y_{\alpha R}'(s) + (1-\eta) f_{\alpha R}(s) 
	\le c_1 \alpha^2 y_{\alpha R}(s) + \delta_{\eta \alpha R}
	\qquad \mbox{for all } s\in (0,T),
  \ee
  which on integration yields
  \bea{5.7}
	e^{-c_1 \alpha^2 s} y_{\alpha R}(s)
	+ (1-\eta) \int_0^s e^{-c_1 \alpha^2 \sigma} f_{\alpha R}(\sigma) d\sigma
	&\le& y_{\alpha R}(0)
	+ \delta_{\eta \alpha R} \int_0^s e^{-c_1 \alpha^2 \sigma} d\sigma \nn\\
	&\le& y_{\alpha R}(0)
	+ \frac{\delta_{\eta \alpha R}}{c_1 \alpha^2}
	\qquad \mbox{for all } s\in (0,T).
  \eea
  We now observe that since $\zeta_R \nearrow 1$ in $\R^n$ as $R\nearrow \infty$, Beppo Levi's theorem asserts that
  for each $s\in (0,T)$,
  \be{5.77}
	y_{\alpha R}(s) \nearrow y_\alpha(s):=\ir \varphi_\alpha v^p(\cdot,s)
	\qquad \mbox{as } R\nearrow \infty,
  \ee
  and that for all $s\in (0,T)$ we have
  \be{5.78}
	f_{\alpha R}(s) \nearrow f_\alpha(s):= p^2 \ir \varphi_\alpha v^{p-1}(\cdot,s) |\nabla v(\cdot,s)|^2
	\qquad \mbox{as } R\nearrow \infty.
  \ee
  According to (\ref{5.55}), from (\ref{5.7}) we thus infer that
  \be{5.8}
	e^{-c_1 \alpha^2 s} y_\alpha(s)
	+ (1-\eta) \int_0^s e^{-c_1 \alpha^2 \sigma} f_\alpha(\sigma) d\sigma
	\le y_\alpha(0)
	\qquad \mbox{for all } s\in (0,T).
  \ee
  As a similar argument based on monotone convergence warrants that for $s\in [0,T)$ we have
  \be{5.88}
	y_\alpha(s)\nearrow y(s):= \ir v^p(\cdot,s)
	\qquad \mbox{as } \alpha\searrow 0,
  \ee
  and that
  \be{5.89}
	f_\alpha(s)\nearrow f(s):= p^2 \ir v^{p-1}(\cdot,s)|\nabla v(\cdot,s)|^2
	\qquad \mbox{as } \alpha\searrow 0
  \ee
  for each $s\in (0,T)$, once more by Beppo Levi's theorem we conclude from (\ref{5.8}) that
  \be{5.9}
	y(s) + (1-\eta) \int_0^s f(\sigma)d\sigma \le y(0)
	\qquad \mbox{for all } s\in (0,T).
  \ee
  which proves (\ref{5.2}), because $\eta\in (0,1)$ was arbitrary.\abs
  In order to show that conversely also
  \be{5.99}
	\ir v^p(\cdot,s)
	+ p^2 \int_0^s \ir v^{p-1} |\nabla v|^2
	\ge \ir v_0^p
	\qquad \mbox{for all } s\in (0,T),
  \ee
  given $\eta\in (0,1)$ we again combine (\ref{5.3}) with (\ref{5.33})-(\ref{5.5}) and (\ref{5.56}) to see that
  \bas
	y_{\alpha R}'(s) + (1+\eta) f_{\alpha R}(s) 
	\ge - c_1 \alpha^2 y_{\alpha R}(s) - \delta_{\eta \alpha R}
	\qquad \mbox{for all } s\in (0,T),
  \eas
  and that hence
  \bas
	e^{c_1 \alpha^2 s} y_{\alpha R}(s)
	+ (1+\eta) \int_0^s e^{c_1 \alpha^2 \sigma} f_{\alpha R}(\sigma) d\sigma
	\ge y_{\alpha R}(0)
	- \frac{\delta_{\eta \alpha R}}{c_1 \alpha^2}
	\qquad \mbox{for all } s\in (0,T).
  \eas
  On taking $R\searrow \infty$, by (\ref{5.55}), (\ref{5.77}) and (\ref{5.78}) we thereby obtain that
  \be{5.10}
	e^{c_1 \alpha^2 s} y_\alpha(s)
	+ (1+\eta) \int_0^s e^{+c_1 \alpha^2 \sigma} f_\alpha(\sigma) d\sigma
	\ge y_\alpha(0)
	\qquad \mbox{for all } s\in (0,T),
  \ee
  where in passing to the limit in the integral involving $f_{\alpha R}$ we have used the dominated convergence theorem
  along with the inequality
  \bas
	e^{c_1 \alpha^2 \sigma} f_{\alpha R}(\sigma) \le e^{c_1 \alpha^2 s} f_\alpha(\sigma),
  \eas
  valid for all $\sigma\in (0,s)$ by (\ref{5.78}), and the fact that $f_\alpha$ is integrable over $(0,s)$ due to 
  e.g.~(\ref{5.9}) and (\ref{5.89}), for any $s\in(0,T)$.
  Likewise, one more application of the dominated convergence theorem on the basis of (\ref{5.88}) and (\ref{5.89})
  finally shows that (\ref{5.10}) entails the inequality
  \bas
	y(s) + (1+\eta) \int_0^s f(\sigma)d\sigma \ge y(0)
	\qquad \mbox{for all } s\in (0,T),
  \eas
  and thereby yields (\ref{5.99}) on letting $\eta\searrow 0$.
\end{proof}
\subsection{Time continuity and integrability of $\ir |\nabla v|^2$}
In view of the regularity requirements in Definition \ref{defi11}, an inevitable task consists in showing
continuous dependence of the Dirichlet integral in (\ref{0}) on the time variable. In the transformed problem,
this amounts to showing that $0\le s \mapsto \ir |\nabla v(\cdot,s)|^2$ is continuous, which is prepared by the following 
statement on a corresponding temporally uniform spatial decay property
that will beyond this also be useful in Lemma \ref{lem9} and Proposition \ref{prop103} below.
\begin{lem}\label{lem7}
  Suppose that $v_0\in L^2(\R^n)\cap L^\infty(\R^n)\cap C^0(\R^n)$ is positive, and that with some $s_2>0$ and 
  $T\in (s_2,\infty]$, $v$ is a bounded positive classical solution of (\ref{0v}) in $\R^n \times (0,T)$ which is such that
  $v(\cdot,s_2)$ belongs to $W^{1,2}(\R^n)$. Then $v(\cdot,s) \in W^{1,2}(\R^n)$ for all $s\in (s_2,T)$ and
  $v|\Delta v|^2 \in L^1(\R^n\times (s_2,T))$, and for
  each $R>0$ we have
  \begin{align}\label{7.1}
	\int_{\R^n \setminus B_{R+1}} |\nabla v(\cdot,s)|^2
	+ \int_{s_2}^s \int_{\R^n \setminus B_{R+1}} v|\Delta v|^2
	\le& \int_{\R^n\setminus B_R} |\nabla v(\cdot,s_2)|^2 \nn\\
	&  + 4 \int_{s_2}^s \int_{B_{R+1}\setminus B_R} v|\nabla v|^2
	\qquad \mbox{for all } s\in (s_2,T).
  \end{align}
\end{lem}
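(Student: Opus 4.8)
The plan is to derive (\ref{7.1}) from a localized energy identity obtained by testing the equation against $-\zeta_R^2\Delta v$, where $\zeta_R$ is a Lipschitz cutoff with $\zeta_R\equiv 0$ on $B_R$, $\zeta_R\equiv 1$ on $\R^n\setminus B_{R+1}$ and $|\nabla\zeta_R|\le 1$, so that $\nabla\zeta_R$ is supported in $B_{R+1}\setminus B_R$. Formally, writing $\psi=\zeta_R$ and using $v_s=v\Delta v$, one integrates by parts to find
\[
\frac{1}{2}\frac{d}{ds}\ir\psi^2|\nabla v|^2 + \ir\psi^2 v|\Delta v|^2 = -2\ir\psi\,\nabla\psi\cdot\nabla v\,v\Delta v,
\]
and then Young's inequality in the form $2ab\le\frac12 a^2+2b^2$, applied to $a=\psi\sqrt v\,|\Delta v|$ and $b=|\nabla\psi|\sqrt v\,|\nabla v|$, absorbs half of the dissipation term and leaves
\[
\frac{d}{ds}\ir\psi^2|\nabla v|^2 + \ir\psi^2 v|\Delta v|^2 \le 4\ir|\nabla\psi|^2 v|\nabla v|^2 .
\]
Since $\psi\le 1$ with $\psi\equiv 1$ outside $B_{R+1}$ and $\psi\equiv 0$ on $B_R$, integrating this in time over $(s_2,s)$ and enlarging or restricting the domains of the cutoff-weighted integrals on the appropriate regions reproduces exactly (\ref{7.1}), the constant $4$ matching precisely the outcome of the above Young step.

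The difficulty is that $\zeta_R$ is not compactly supported, so both the integration by parts and the differentiation under the integral sign must be justified despite only classical regularity and a priori uncontrolled spatial decay of $\nabla v$ and $\Delta v$. I would first record that, by standard interior parabolic regularity for the equation (\ref{0v}), which is uniformly parabolic since $v>0$, the solution $v$ is smooth in $\R^n\times(0,T)$, so that all pointwise manipulations above are legitimate on compact sets. To render the global integrals finite I would insert a second, compactly supported cutoff $\chi_\rho$ with $\chi_\rho\equiv1$ on $B_\rho$, $\chi_\rho\equiv0$ outside $B_{\rho+1}$ and $|\nabla\chi_\rho|\le1$, and carry out the computation with $\psi=\zeta_R\chi_\rho$. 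Then every integral extends only over the compact set $\overline{B}_{\rho+1}$, the boundary terms in the integration by parts vanish, and $s\mapsto\ir\psi^2|\nabla v(\cdot,s)|^2$ is genuinely $C^1$ on $(s_2,T)$.

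For $\rho>R+1$ the supports of $\nabla\zeta_R$ and $\nabla\chi_\rho$ are disjoint, whence $|\nabla\psi|^2=\chi_\rho^2|\nabla\zeta_R|^2+\zeta_R^2|\nabla\chi_\rho|^2$, and the right-hand side of the differential inequality splits into the wanted annulus contribution $4\int_{B_{R+1}\setminus B_R}v|\nabla v|^2$ plus an error of the form $4\int_{B_{\rho+1}\setminus B_\rho}v|\nabla v|^2$. The crucial observation is that Lemma \ref{lem5}, applied with $p=2$, which is admissible since $v_0\in L^2(\R^n)$, guarantees $v|\nabla v|^2\in L^1(\R^n\times(0,T))$, so that the time-integrated error tends to $0$ as $\rho\to\infty$, while $\chi_\rho\nearrow1$ lets Beppo Levi's theorem recover the full integrals in the remaining terms. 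Integrating the differential inequality over $(s_2,s)$, bounding the initial term by $\int_{\R^n\setminus B_R}|\nabla v(\cdot,s_2)|^2$ by means of the hypothesis $v(\cdot,s_2)\in W^{1,2}(\R^n)$, and then letting $\rho\to\infty$ yields (\ref{7.1}); the finiteness of its left-hand side simultaneously delivers $v(\cdot,s)\in W^{1,2}(\R^n)$ and $v|\Delta v|^2\in L^1(\R^n\times(s_2,T))$. I expect the main obstacle to be precisely the control of this far-field error term, that is, ensuring that the auxiliary cutoff $\chi_\rho$ can be removed without leaving an uncontrolled boundary contribution, which is exactly where the integrability furnished by Lemma \ref{lem5} is indispensable.
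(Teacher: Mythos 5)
Your proposal is correct and follows essentially the same route as the paper: the paper's proof works with a single piecewise-linear cutoff $\zeta_\rho$ (vanishing on $B_R$ and outside $B_{\rho+1}$, equal to $1$ on $B_\rho\setminus B_{R+1}$), which coincides exactly with your product $\zeta_R\chi_\rho$, and it performs the same integration by parts, the same Young step producing the constant $4$, the same appeal to Lemma \ref{lem5} with $p=2$ to make the far annulus term $\int_{s_2}^s\int_{B_{\rho+1}\setminus B_\rho}v|\nabla v|^2$ vanish as $\rho\to\infty$, and the same monotone-convergence passage to the limit. The only cosmetic difference is that the paper states the identity in time-integrated form from the outset rather than as a differential inequality that is subsequently integrated.
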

\proof
  We fix $R>0$ and let $\zeta_\rho \in W^{1,\infty}(\R^n)$ be defined by
  \bas
	\zeta_\rho(x):=
	\left\{ \begin{array}{ll}
	0, \qquad & x\in B_R, \\[1mm]
	|x|-R, \qquad & x\in B_{R+1}\setminus B_R, \\[1mm]
	1, \qquad & x\in B_\rho\setminus B_{R+1}, \\[1mm]
	\rho+1-|x|, \qquad & x\in B_{\rho+1}\setminus B_\rho, \\[1mm]
	0, \qquad & x\in \R^n\setminus B_{\rho+1}, 
	\end{array} \right.
  \eas
  for $\rho>R+1$. Then since $v$ is smooth in $\R^n \times (0,T)$, we may use (\ref{0v}) to compute
  \bea{7.2}
	\hspace*{-8mm}
	\frac{1}{2} \ir \zeta_\rho^2 |\nabla v(\cdot,s)|^2
	- \frac{1}{2} \ir \zeta_\rho^2 |\nabla v(\cdot,s_2)|^2
	&=& - \int_{s_2}^s \ir \zeta_\rho^2 v_s\Delta v
	- 2\int_{s_2}^s \ir \zeta_\rho v_s (\nabla v \cdot \nabla \zeta_\rho) \nn\\
	&=& - \int_{s_2}^s \ir \zeta_\rho^2 v |\Delta v|^2
	- 2\int_{s_2}^s \ir \zeta_\rho v \Delta v (\nabla v \cdot \nabla \zeta_\rho)
%	\qquad \mbox{for all } s\in (s_2,T),
  \eea
  for all $s\in (s_2,T)$, where by Young's inequality,
  \bea{7.3}
	- 2\int_{s_2}^s \ir \zeta_\rho v \Delta v (\nabla v \cdot \nabla \zeta_\rho)
	&\le& \frac{1}{2} \int_{s_2}^s \ir \zeta_\rho^2 v|\Delta v|^2
	+ 2 \int_{s_2}^s \ir |\nabla \zeta_\rho|^2 v|\nabla v|^2 \nn\\
	&\le& \frac{1}{2} \int_{s_2}^s \ir \zeta_\rho^2 v|\Delta v|^2
	+ 2 \int_{s_2}^s \int_{B_{R+1}\setminus B_R} v|\nabla v|^2 \nn\\
	& & + 2 \int_{s_2}^s \int_{B_{\rho+1}\setminus B_\rho} v|\nabla v|^2 
	\qquad \mbox{for all } s\in (s_2,T),
  \eea
  because $|\nabla \zeta_\rho| \le 1$ a.e.~in $\R^n$ and $\nabla \zeta_\rho=0$ outside
  $(B_{R+1}\setminus B_R)\cup (B_{\rho+1}\setminus B_\rho)$.
  Here we note that our assumptions on $v_0$ and $v$ together with the outcome of Lemma \ref{lem5}, applied to $p:=2$,
  ensure that $v|\nabla v|^2 \in L^1(\R^n\times (0,T))$ and hence
  \bas
	\int_{s_2}^s \int_{B_{\rho+1}\setminus B_\rho} v|\nabla v|^2 \to 0
	\qquad \mbox{as } \rho\to\infty
  \eas
  by the dominated convergence theorem. 
  Since as $\rho\to\infty$ we furthermore have
  \bas
	\zeta_\rho(x) \nearrow \zeta(x):=
	\left\{ \begin{array}{ll}
	0, \qquad & x\in B_R, \\[1mm]
	|x|-R, \qquad & x\in B_{R+1}\setminus B_R, \\[1mm]
	1, \qquad & x\in \R^n \setminus B_{R+1},
	\end{array} \right.
  \eas
  from (\ref{7.2}) and (\ref{7.3}) we infer on an application of Beppo Levi's theorem that
  \bas
	\frac{1}{2} \ir \zeta^2 |\nabla v(\cdot,s)|^2
	+ \frac{1}{2} \int_{s_2}^2 \ir \zeta^2 v|\Delta v|^2
	\le \frac{1}{2} \ir \zeta^2 |\nabla v(\cdot,s_2)|^2
	+ 2 \int_{s_2}^s \int_{B_{R+1}\setminus B_R} v|\nabla v|^2
%	\qquad \mbox{for all } s\in (s_2,T),
  \eas
  for all $s\in (s_2,T)$,
  which in view of the evident inequalities $\chi_{\R^n\setminus B_{R+1}} \le \zeta \le \chi_{\R^n \setminus B_R}$
  immediately yields both the claimed regularity properties and (\ref{7.1}).
\qed

We can now establish the desired continuity property.
\begin{lem}\label{lem8}
  Suppose that for some $p\in (0,1)$, the positive function $v_0$ belongs to 
  $L^p(\R^n)\cap C^0(\R^n)$ and satisfies
  \be{8.1}
	v_0(x)\to 0
	\qquad \mbox{as } |x|\to\infty.
  \ee
  Then the minimal solution $v$ of (\ref{0v}) has the property that
  $v(\cdot,s)\in W^{1,2}(\R^n)$ for all $s>0$, that $\na v\in L^2(\R^n\times(0,\infty))$, and that
  \be{8.2}
	(0,\infty) \ni s \mapsto \ir |\nabla v(\cdot,s)|^2 
	\quad \mbox{is continuous.}
  \ee
\end{lem}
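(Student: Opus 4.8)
The plan is to read off all three assertions from the $L^p$-identity of Lemma \ref{lem5} and the localized energy inequality of Lemma \ref{lem7}, after recording two elementary facts. Since $v_0\in C^0(\R^n)$ vanishes at infinity it is bounded, so writing $M:=\|v_0\|_{L^\infty(\R^n)}$ we have $0<v\le M$ for the minimal solution by Lemma \ref{lem0}; moreover the pointwise estimate $v_0^2\le M^{2-p}v_0^p$ shows that $v_0\in L^2(\R^n)$, so that Lemma \ref{lem5}, used both with the given exponent $p\in(0,1)$ and with the exponent $2$, as well as Lemma \ref{lem7}, are applicable to $v$ on $\R^n\times(0,\infty)$.

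First I would establish the global gradient bound. Applying Lemma \ref{lem5} with the given $p\in(0,1)$ and $T=\infty$ gives $v^{p-1}|\nabla v|^2\in L^1(\R^n\times(0,\infty))$, and since $1-p>0$ and $0<v\le M$ the pointwise inequality $|\nabla v|^2=v^{1-p}\cdot v^{p-1}|\nabla v|^2\le M^{1-p}v^{p-1}|\nabla v|^2$ at once yields $\nabla v\in L^2(\R^n\times(0,\infty))$, which is the second assertion. By Fubini, $s\mapsto\ir|\nabla v(\cdot,s)|^2$ is integrable on $(0,\infty)$ and hence finite for almost every $s>0$; as $v(\cdot,s)\in L^2(\R^n)$ for every $s>0$ by Lemma \ref{lem5} with exponent $2$, for almost every $s_2>0$ we obtain $v(\cdot,s_2)\in W^{1,2}(\R^n)$. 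To pass from these exceptional times to all times, I would fix an arbitrary $s>0$, choose such an $s_2\in(0,s)$, and invoke Lemma \ref{lem7} (legitimately, since $v|\nabla v|^2\in L^1$ by Lemma \ref{lem5} with exponent $2$), whose conclusion already asserts $v(\cdot,s)\in W^{1,2}(\R^n)$ for every time beyond $s_2$; since $s_2$ can be chosen arbitrarily small, this covers all $s>0$.

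The decisive step is the continuity of $L(s):=\ir|\nabla v(\cdot,s)|^2$. Here I would split $L(s)=\int_{B_{R+1}}|\nabla v(\cdot,s)|^2+\int_{\R^n\setminus B_{R+1}}|\nabla v(\cdot,s)|^2$ and treat the two pieces on a compact time interval. The interior piece is harmless: as $v\in C^{2,1}(\R^n\times(0,\infty))$, the integrand $|\nabla v|^2$ is bounded and uniformly continuous on $\Bbar_{R+1}\times[s_2,S]$, so $s\mapsto\int_{B_{R+1}}|\nabla v(\cdot,s)|^2$ is continuous for each fixed $R$. The crux is the tail. Fixing $s_0>0$, choosing $s_2\in(0,s_0)$ with $v(\cdot,s_2)\in W^{1,2}(\R^n)$ and some $S>s_0$, Lemma \ref{lem7} bounds, for every $s\in(s_2,S]$,
\bas
	\int_{\R^n\setminus B_{R+1}}|\nabla v(\cdot,s)|^2
	&\le& \int_{\R^n\setminus B_R}|\nabla v(\cdot,s_2)|^2
	+ 4\int_0^\infty\int_{\R^n\setminus B_R} v|\nabla v|^2,
\eas
a quantity independent of $s$ that tends to $0$ as $R\to\infty$, because $\nabla v(\cdot,s_2)\in L^2(\R^n)$ and $v|\nabla v|^2\in L^1(\R^n\times(0,\infty))$. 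This uniform-in-time smallness of the tail, combined with the interior continuity through a routine $\eps/3$ argument, gives continuity of $L$ at $s_0$, and since $s_0>0$ is arbitrary this is exactly (\ref{8.2}). I expect this tail estimate to be the main obstacle: $L$ is monotone (its formal derivative being $-2\ir v|\Delta v|^2\le 0$) and a priori could jump, so what must be excluded is a loss of energy to spatial infinity, and it is precisely the localized inequality of Lemma \ref{lem7} together with the global space-time integrability of $v|\nabla v|^2$ that rules this out.
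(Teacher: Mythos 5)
Your proof is correct, and although its skeleton matches the paper's --- both rest on Lemma \ref{lem5} and Lemma \ref{lem7}, both upgrade the almost-everywhere membership $\nabla v(\cdot,s_2)\in L^2(\R^n)$ to all times via Lemma \ref{lem7}, and both prove (\ref{8.2}) through an interior/tail splitting in which the tail must be made small uniformly in time --- your treatment of the tail takes a genuinely different and more economical route. The paper makes the right-hand side of (\ref{7.1}) small by choosing $s_2$ so that $\ir v^{p-1}(\cdot,s_2)|\nabla v(\cdot,s_2)|^2$ is quantitatively controlled, invoking the temporally uniform spatial decay of $v$ (Lemma \ref{lem1}, whose proof occupies Section 2.1 and rests on a comparison argument together with Lemma \ref{lem03}) to obtain $v<\eta$ outside a large ball, and then absorbing powers of $\eta$ into the weighted integrals $\ir v^{p-1}|\nabla v|^2$ through the factors $\eta^{1-p}$ and $\eta^{2-p}$. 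You instead observe that the two terms on the right of (\ref{7.1}) are tails of \emph{fixed} integrable functions --- of $|\nabla v(\cdot,s_2)|^2\in L^1(\R^n)$, available for every $s_2>0$ by the first part of the lemma, and of $v|\nabla v|^2\in L^1(\R^n\times(0,\infty))$, supplied by Lemma \ref{lem5} with exponent $2$ --- so that absolute continuity of the integral alone makes both small for large $R$, uniformly in $s>s_2$, with no pointwise far-field information on $v$ whatsoever. Both routes are sound; yours is leaner, since it renders Lemma \ref{lem1} and Lemma \ref{lem03} (which in the paper serve only this proof) unnecessary for the present purpose, whereas the paper's route produces along the way the pointwise far-field smallness (\ref{8.5}) of $v$ itself, which is strictly more information than the conclusion (\ref{8.2}) actually needs.
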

\proof
  As our assumptions necessarily require that $v_0\in L^1(\R^n)$, %\red{weil $v_0\to 0$ auch $v_0\in L^\infty$ nach sich zieht und man dann interpolieren kann.} 
 a first application of Lemma \ref{lem5} shows that
  indeed $\na v\in L^2(\R^n\times(0,\infty))$, which due to the fact that also $v_0\in L^2(\R^n)$ may be combined with 
  Lemma \ref{lem7} to imply that $v(\cdot,s)$ belongs to $W^{1,2}(\R^n)$ actually for all $s>0$.
  To verify the continuity property (\ref{8.2}), we fix $\sst>0$ and $\eps>0$, and note that since $v_0\in L^p(\R^n)$,
  once more employing Lemma \ref{lem5} we see that
  \bas
	I:= \int_0^\infty \ir v^{p-1} |\nabla v|^2
  \eas
  is finite. In particular, this entails that it is possible to find $s_2>0$ such that $s_2\in (\sst-1,\sst)$ and
  \be{8.3}
	\ir v^{p-1}(\cdot,s_2) |\nabla v(\cdot,s_2)|^2
	\le c_1:= \frac{I}{\min\{\frac{\sst}{2},1\}},
  \ee
  for otherwise we could draw the absurd conclusion that
  \bas
	I \ge \int_{\max\{\frac{\sst}{2},\sst-1\}}^{\sst} \ir v^{p-1} |\nabla v|^2 
	> c_1 \cdot \Big(\sst - \max \Big\{ \frac{\sst}{2}, \sst-1\Big\} \Big)
	= c_1 \cdot \min \Big\{ \frac{\sst}{2}, 1 \Big\} =I.
  \eas
  We next use that $p<1$ in choosing $\eta>0$ small enough such that
  \be{8.4}
	c_1 \eta^{1-p} < \frac{\eps}{6}
	\qquad \mbox{and} \qquad
	4\eta^{2-p} I < \frac{\eps}{6},
  \ee
  and thereupon rely on the uniform spatial decay property of $v$, as asserted by Lemma \ref{lem1}, to find some suitably
  large $R>0$ fulfilling
  \be{8.5}
	v(x,s)< \eta
	\qquad \mbox{for all $x\in \R^n \setminus B_R$ and all } s>0.
  \ee
  Finally, writing
  \be{8.55}
	\ssst:=\max \Big\{ \frac{\sst}{2}, s_2\Big\} \in (\sst-1,\sst),
  \ee
  we know from the fact that $v$ is a classical solution %standard parabolic regularity theory (\cite{LSU}) \red{Erstens: Das ist ein ganz h\"asslicher Verweis (weil furchtbar ungenau). Zweitens: Hast du eine spezielle Aussage im Sinn? Ich finde jedenfalls auch mit einigem Suchen kein Theorem, das direkt anwendbar ist; welches habe ich \"ubersehen? Drittens: Wozu \"uberhaupt? Nach Lemma \ref{lem0} (oder besser: Def. \ref{defi11}) ist doch $v\in C^{2,1}$, also $\na v$ stetig.} 
  that $\nabla v$ is continuous, and hence uniformly
  continuous, in $\bar B_{R+1} \times [\ssst,\sst+1]$, whence in particular we can pick 
  $\delta\in (0,\min \{1,\sst-\ssst\})$ such that
  \be{8.6}
	\bigg| \int_{B_{R+1}} |\nabla v(\cdot,s)|^2 - \int_{B_{R+1}} |\nabla v(\cdot,\sst)|^2 \bigg|
	< \frac{\eps}{3}
	\qquad \mbox{for all } s\in (\sst-\delta,\sst+\delta).
  \ee
  Now for $s>0$, we split
  \be{8.7}
	\ir |\nabla v(\cdot,s)|^2 - \ir |\nabla v(\cdot,\sst)|^2
	= I_1(s)+I_2(s)+I_3,
  \ee
  where
  \bas
	I_1(s):=\int_{B_{R+1}} |\nabla v(\cdot,s)|^2 - \int_{B_{R+1}} |\nabla v(\cdot,\sst)|^2
  \eas
  as well as
  \bas
	I_2(s):=\int_{\R^n\setminus B_{R+1}} |\nabla v(\cdot,s)|^2
	\quad \mbox{and} \quad
	I_3:=\int_{\R^n\setminus B_{R+1}} |\nabla v(\cdot,\sst)|^2,
  \eas
  and where according to (\ref{8.6}) we already know that
  \be{8.8}
	|I_1(s)| < \frac{\eps}{3}
	\qquad \mbox{for all } s\in (\sst-\delta,\sst+\delta).
  \ee
  In order to estimate the two remaining terms on the right of (\ref{8.7}), we first note that as a consequence of
  \eqref{8.5}, \eqref{8.3} and the left inequality in (\ref{8.4}), 
  \bas
	\int_{\R^n\setminus B_R} |\nabla v(\cdot,s_2)|^2
	&=& \int_{\R^n\setminus B_R} v^{1-p}(\cdot,s_2) \cdot v^{p-1}(\cdot,s_2)|\nabla v(\cdot,s_2)|^2 \\
	&\le& \eta^{1-p} \int_{\R^n\setminus B_R} v^{p-1}(\cdot,s_2)|\nabla v(\cdot,s_2)|^2 \\
	&\le& \eta^{1-p} c_1 \\
	&<& \frac{\eps}{6}.
  \eas
  Therefore, Lemma \ref{lem7} guarantees that
  \bas
	\int_{\R^n\setminus B_{R+1}}  |\nabla v(\cdot,s)|^2
	< \frac{\eps}{6} + 4\int_{s_2}^s \int_{B_{R+1}\setminus B_R} v|\nabla v|^2 
	\qquad \mbox{for all } s>s_2,
  \eas
  where again by \eqref{8.5}, in view of the definition of $I$, and by the second restriction in (\ref{8.4}) we see that 
  \bas
	4\int_{s_2}^s \int_{B_{R+1}\setminus B_R} v|\nabla v|^2
	&\le& 4\int_0^\infty \int_{\R^n \setminus B_R} v|\nabla v|^2 \\
	&\le& 4\eta^{2-p} \int_0^\infty \int_{\R^n\setminus B_R} v^{p-1} |\nabla v|^2 \\
	&\le& 4\eta^{2-p}I\\
	&<& \frac{\eps}{6}
	\qquad \mbox{for all } s>s_2,
  \eas
  so that in fact
  \bas
	\int_{\R^n\setminus B_{R+1}}  |\nabla v(\cdot,s)|^2
	<\frac{\eps}{3}
	\qquad \mbox{for all } s>s_2.
  \eas
  Since according to our choice of $\delta$ and our definition (\ref{8.55}) of $\ssst$ we know that
  \bas
	s_2+\delta < s_2+(\sst-\ssst) \le s_2+(\sst-s_2)=\sst,
  \eas
  this firstly shows that
  \bas
	|I_2(s)| <\frac{\eps}{3}
	\qquad \mbox{for all } s\in (\sst-\delta,\sst+\delta),
  \eas
  and thus clearly warrants that also
  \bas
	|I_3| = |I_2(\sst)| < \frac{\eps}{3}.
  \eas
  In light of (\ref{8.7}), in conjunction with (\ref{8.8}) this shows that
  \bas 
	\bigg| \ir |\nabla v(\cdot,s)|^2 - \ir |\nabla v(\cdot,\sst)|^2 \bigg|
	< \eps
	\qquad \mbox{for all } s\in (\sst-\delta,\sst+\delta)
  \eas
  and thereby completes the proof of (\ref{8.2}).
\qed
\mysection{Classical unit-mass solutions of (\ref{0}). Proof of Theorem \ref{theo10}}\label{sect3}
Having derived some properties of solutions to \eqref{0v}, we proceed to use these for the construction of solutions to 
\eqref{0} through an appropriate transformation 
involving the functions introduced in the following. 
\begin{defi}\label{def:fcts}
  Given a positive function $u_0\in C^0(\R^n)\cap L^p(\R^n)$ for some $p\in(0,1)$ which satisfies $\ir u_0=1$ an well as 
  $u_0(x)\to 0$ as $|x|\to \infty$, we let $v$ be the minimal solution of \eqref{0v} emanating from initial data $v_0:=u_0$ 
  and set  
  \begin{equation}\label{def:K}
 	K(s):=\ir |\na v(\cdot,s)|^2, \qquad s\in(0,\infty),
  \end{equation}
  and we let $H\colon [0,\infty)\to[0,\infty)$ by 
  \begin{equation}\label{def:H}
 	H(s):=s -\int_0^s\int_0^\sigma K(\tau) d\tau d\sigma, \qquad s\in [0,\infty).
  \end{equation}
  Moreover we define 
  \begin{equation}\label{def:G}
 	G(s):=\frac1{1-\int_0^s K(\tau)d\tau} ,\qquad s\in[0,\infty),
  \end{equation}
  as well as 
  \begin{equation}\label{def_g}
 	g(t):=G(h(t)), \qquad t\in[0,\infty),
  \end{equation}
  and let 
  \begin{equation}\label{def_u}
 	u(x,t):=g(t) v(x,h(t)), \qquad x\in\R^n, t\in[0,\infty),
  \end{equation}
  and 
  \begin{equation}\label{def:L}
 	L(t):=\ir |\na u(\cdot,t)|^2,\qquad t\in(0,\infty).
  \end{equation}
\end{defi}
In order to make sure that $H$ is surjective, let us recall the following from \cite[Prop. 1.3]{fast_growth1}.
\begin{lem}\label{FWProp1_3}
  Let $v_0\in C^0(\R^n)$ positive and let $v$ be a global positive classical solution of \eqref{0v}. Then for any $R>0$ 
  we have
  \bas
  	\inf_{x\in B_R} \Big\{ sv(x,s) \Big\} \to \infty \qquad \mbox{ as } s\to\infty. 
  \eas
\end{lem}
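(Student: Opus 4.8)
The plan is to produce a pointwise lower bound for $v$ by comparison from below with the very same eigenfunction-based subsolutions that appear in the proof of Lemma \ref{lem1}, and then to optimize this bound by sending the radius of the underlying ball to infinity. First I fix $R>0$ and, for each $\rho>R$, let $\varphi_\rho$ be the principal Dirichlet eigenfunction of $-\Delta$ in $B_\rho$, normalized so that $\varphi_\rho(0)=\max_{\bar B_\rho}\varphi_\rho=1$, and let $\lambda_\rho>0$ be the associated eigenvalue. By scaling, $\varphi_\rho(x)=\varphi_1(x/\rho)$ and $\lambda_\rho=\lambda_1\rho^{-2}$, where $\varphi_1,\lambda_1$ refer to the unit ball, so in particular $\lambda_\rho\to 0$ and $\varphi_\rho\to 1$ on $B_R$ as $\rho\to\infty$. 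Since $v$ is a positive classical solution, at the fixed time $s_0:=1$ the number $m_\rho:=\min_{\bar B_\rho}v(\cdot,1)$ is positive by continuity and compactness.

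Next I would reuse the subsolution $\uv(x,s):=y_\rho(s)\varphi_\rho(x)$ with $y_\rho(s):=m_\rho/\big(1+\lambda_\rho m_\rho(s-1)\big)$ solving $y_\rho'=-\lambda_\rho y_\rho^2$, $y_\rho(1)=m_\rho$. Exactly as in the proof of Lemma \ref{lem1}, the bound $0\le\varphi_\rho\le 1$ gives $\uv_s-\uv\Delta\uv=(y_\rho'+\lambda_\rho y_\rho^2\varphi_\rho)\varphi_\rho\le 0$ in $B_\rho\times(1,\infty)$, while $\uv=0\le v$ on $\partial B_\rho$ and $\uv(\cdot,1)\le m_\rho\le v(\cdot,1)$ in $\bar B_\rho$. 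An application of the comparison principle invoked there then yields $v\ge\uv$ in $\bar B_\rho\times[1,\infty)$, so that
\[
	s\,v(x,s)\ge \frac{s\,m_\rho\,\varphi_\rho(x)}{1+\lambda_\rho m_\rho(s-1)},
	\qquad x\in\bar B_\rho,\ s>1.
\]

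The decisive observation is that the starting amplitude $m_\rho$ cancels in the large-time limit: letting $s\to\infty$ in the displayed estimate produces $\liminf_{s\to\infty}s\,v(x,s)\ge\varphi_\rho(x)/\lambda_\rho$, regardless of how small $m_\rho$ is. Taking the infimum over $x\in B_R$ and using that $\varphi_\rho$ is radially nonincreasing, I obtain
\[
	\liminf_{s\to\infty}\ \inf_{x\in B_R}s\,v(x,s)
	\ \ge\ \frac{\min_{\bar B_R}\varphi_\rho}{\lambda_\rho}
	\ =\ \frac{\rho^2}{\lambda_1}\,\varphi_1\!\Big(\tfrac{R}{\rho}\Big).
\]
Finally I let $\rho\to\infty$: since $\varphi_1(R/\rho)\to\varphi_1(0)=1$ while $\rho^2\to\infty$, the right-hand side tends to $+\infty$, and because the left-hand side is independent of $\rho$ this forces $\inf_{x\in B_R}s\,v(x,s)\to\infty$, as claimed.

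The only genuinely delicate point is this cancellation of $m_\rho$ in the limit $s\to\infty$. It is precisely this feature that lets the argument succeed with no uniform-in-$\rho$ lower bound on $v$ at any fixed time — a bound one has no right to expect for a solution whose initial data need not decay at spatial infinity. Everything else reduces to the standard eigenvalue scaling on balls and the comparison principle already used in the proof of Lemma \ref{lem1}.
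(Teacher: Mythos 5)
Your proof is correct. It is worth pointing out that the paper does not prove Lemma \ref{FWProp1_3} at all: it imports the statement verbatim from \cite[Prop.\ 1.3]{fast_growth1}. So what you have produced is a self-contained proof where the paper offers only a citation, and your argument assembles it entirely from ingredients the paper already uses elsewhere: the eigenfunction subsolution $\uv(x,s)=y_\rho(s)\varphi_\rho(x)$ with $y_\rho'=-\lambda_\rho y_\rho^2$ is exactly the device from the proof of Lemma \ref{lem1}, the inequality $\uv_s-\uv\Delta\uv=\lambda_\rho y_\rho^2\varphi_\rho(\varphi_\rho-1)\le 0$ is the same computation, and the comparison step rests on the same reference \cite{wiegner}. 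The decisive observation, which you correctly isolate, is that $s\,y_\rho(s)\to 1/\lambda_\rho$ as $s\to\infty$ \emph{independently} of the amplitude $m_\rho=\min_{\overline{B}_\rho}v(\cdot,1)$; this is what yields the $\rho$-independent left-hand side against the lower bound $\rho^2\varphi_1(R/\rho)/\lambda_1\to\infty$, with no need for any uniform-in-$\rho$ lower bound on $v(\cdot,1)$, and it also makes the argument valid for an arbitrary global positive classical solution, which is the generality in which the lemma is stated. Two cosmetic points: (i) your initial-time comparison $\uv(\cdot,1)\le m_\rho\le v(\cdot,1)$ is non-strict, whereas the paper, in the proof of Lemma \ref{lem1}, arranges strictness by comparing at level $\delta<\delta_0$ and letting $\delta\nearrow\delta_0$; if the invoked comparison principle demands strict ordering, replace $y_\rho(1)=m_\rho$ by $y_\rho(1)=m_\rho/2$, which costs nothing since $m_\rho$ cancels in the limit anyway; (ii) the interchange of $\inf_{x\in B_R}$ with the limit $s\to\infty$ is legitimate precisely because your lower bound factorizes as $s\,y_\rho(s)\cdot\varphi_\rho(x)$, so the infimum acts only on the $x$-factor --- this is implicit in your write-up and worth one explicit line.
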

Indeed, this asserts invertibility of $H$, as contained in the next statement.
\begin{lem}\label{lem:h}
  Suppose that for some $p\in (0,1)$, 
  $u_0\in L^p(\R^n)\cap C^0(\R^n)$ is positive and satisfies $\ir u_0=1$ as well as
  $u_0(x)\to 0$ as $|x|\to\infty$.
  Then the function $H\colon [0,\infty)\to[0,\infty)$ defined in \eqref{def:H} is bijective and 
  \be{def_h}
	h:=H^{-1} \colon[0,\infty)\to[0,\infty)
  \ee
  is well-defined. Moreover, $h$ belongs to $C^2((0,\infty))\cap C^0([0,\infty))$, and for $t\in(0,\infty)$ we have 
  \begin{equation}\label{eq:hstrich}
  	h'(t)=\frac{1}{H'(h(t))} \;\;\;\mbox{ and }\;\;\;
	 h''(t)=-\frac{1}{H'(h(t)))^2}H''(h(t))h'(t) = -\frac{H''(h(t))}{H'(h(t)))^3},
  \end{equation}
  where $H'(s)>0$ for all $s\in(0,\infty)$.
\end{lem}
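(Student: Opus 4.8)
The plan is to read off the first two derivatives of $H$ directly from its definition \eqref{def:H}, to use the mass identity provided by Lemma \ref{lem5} (with $p=1$) to fix the sign of $H'$, to invoke the large-time lower bound of Lemma \ref{FWProp1_3} to force $H(s)\to\infty$, and finally to obtain invertibility together with the derivative formulas from the inverse function theorem. Writing $\Phi(\sigma):=\int_0^\sigma K(\tau)\,d\tau$, \eqref{def:H} reads $H(s)=s-\int_0^s\Phi(\sigma)\,d\sigma$, so that
\[
  H'(s)=1-\int_0^s K(\tau)\,d\tau, \qquad s>0.
\]
Since $u_0\in C^0(\R^n)$ vanishes at infinity it lies in $L^1(\R^n)\cap L^\infty(\R^n)$, so Lemma \ref{lem5} applies with $p=1$ to the bounded minimal solution $v$ and yields $K\in L^1((0,\infty))$ together with $\ir v(\cdot,s)+\int_0^s K(\tau)\,d\tau=\ir v_0=1$. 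Hence $H'(s)=\ir v(\cdot,s)$, which is strictly positive because $v>0$; this is exactly the asserted inequality $H'(s)>0$ (and it identifies $H'=1/G$ with $G$ from \eqref{def:G}). Moreover $K$ is continuous on $(0,\infty)$ by Lemma \ref{lem8}, so $H''(s)=-K(s)$ is continuous and $H\in C^2((0,\infty))$; continuity at $s=0$ with $H(0)=0$ follows from $\int_0^s K\to 0$ as $s\searrow 0$, giving $H\in C^0([0,\infty))$.

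Strict positivity of $H'$ makes $H$ strictly increasing, hence injective, so the one substantive point is surjectivity onto $[0,\infty)$, i.e. $H(s)\to\infty$ as $s\to\infty$. For this I would fix $R>0$ and estimate $H(s)=\int_0^s\ir v(\cdot,\sigma)\,d\sigma\ge\int_0^s\int_{B_R}v(\cdot,\sigma)\,d\sigma$, and then invoke Lemma \ref{FWProp1_3}: since $\inf_{x\in B_R}\{\sigma v(x,\sigma)\}\to\infty$, there exists $\sigma_0>0$ with $v(x,\sigma)\ge 1/\sigma$ for all $x\in B_R$ and all $\sigma\ge\sigma_0$, whence $\ir v(\cdot,\sigma)\ge |B_R|/\sigma$ and therefore $H(s)\ge |B_R|\ln(s/\sigma_0)\to\infty$. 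I expect this to be the main obstacle, since it is the only place where a nontrivial asymptotic estimate enters rather than elementary calculus; everything else reduces to the identity $H'=\ir v(\cdot,\cdot)$.

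It then follows that $H\colon[0,\infty)\to[0,\infty)$ is a continuous strictly increasing surjection, hence a bijection, so $h:=H^{-1}$ is well-defined, continuous and strictly increasing on $[0,\infty)$ with $h(0)=0$, which gives $h\in C^0([0,\infty))$. Because $H\in C^2((0,\infty))$ with $H'>0$ and $H((0,\infty))=(0,\infty)$, the inverse function theorem yields $h\in C^2((0,\infty))$ together with $h'(t)=1/H'(h(t))$; differentiating this relation once more and substituting $h'(t)=1/H'(h(t))$ produces
\[
  h''(t)=-\frac{H''(h(t))\,h'(t)}{(H'(h(t)))^2}=-\frac{H''(h(t))}{(H'(h(t)))^3},
\]
which are precisely the claimed identities in \eqref{eq:hstrich}.
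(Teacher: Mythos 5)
Your proof is correct and follows essentially the same route as the paper: the mass identity from Lemma \ref{lem5} with $p=1$ to get $H'(s)=\ir v(\cdot,s)>0$ and $K\in L^1((0,\infty))$, Lemma \ref{lem8} for continuity of $K$ (hence $H\in C^2$), Lemma \ref{FWProp1_3} to force $H(s)\gtrsim \ln s\to\infty$ for surjectivity, and the inverse function theorem for \eqref{eq:hstrich}. One small slip: vanishing at infinity does not imply $u_0\in L^1(\R^n)$ (consider $(1+|x|)^{-n}$); the integrability you need is instead immediate from the hypothesis $\ir u_0=1$ (or from $u_0\in L^p\cap L^\infty$), so the conclusion stands.
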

\proof 
  Due to the conditions on $u_0$, Lemma \ref{lem8} asserts that $K$ is continuous on $(0,\infty)$ and Lemma \ref{lem5} 
  additionally shows that $K\in L^1((0,\infty))$ and correspondingly $H\in C^2((0,\infty))\cap C^0([0,\infty))$. 
  Furthermore, again according to Lemma \ref{lem5}
  \begin{equation}\label{eq:Hstrich}
  	H'(s)=1 -\int_0^s K(\tau)d\tau = \ir v(\cdot,s)>0  \qquad \mbox{ for all }s\in(0,\infty).
  \end{equation}%positivity (last step): from Lemma \ref[lem0}. 
  Hence, $H$ is strictly monotone and thus injective. 
  Moreover, by Lemma \ref{FWProp1_3} we can find $s_0>0$ such that for all $s>s_0$ we have 
  $\inf_{x\in B_1} sv(x,s) \ge  \frac1{|B_1|}$. 
  With this, we see, again using Lemma \ref{lem5}, that 
  \begin{align*}
  	H(s)=&\int_0^s\left(\ir u_0-\int_0^\sigma K(\tau) d\tau\right)d\sigma 
	= \int_0^s\ir v(s) ds 
	\ge \int_{s_0}^s \bigg\{\int_{B_1} \sigma v(x,\sigma)  dx \bigg\} \cdot \frac{1}{\sigma} d\sigma\\
  	\ge&\int_{s_0}^s|B_1| \inf_{x\in B_1} \Big\{ \sigma v(x,\sigma) \Big\} \cdot \frac1\sigma d\sigma
	\geq\int_{s_0}^s \frac1\sigma d\sigma \to \infty
  \end{align*}
  as $s\to \infty$ and we can infer surjectivity of $H$. Altogether, we see  
  that $h\in C^0([0,\infty))$ is well-defined. 
  In addition, positivity of $H'$ also shows that \eqref{eq:hstrich} holds and $h\in C^2((0,\infty))$.
\qed
Let us list some regularity properties and further connections between the functions introduced in 
Definition \ref{def:fcts} and in (\ref{def_h}).
\begin{lem}\label{lem:reg_and_identities}
  Let $u_0\in L^p(\R^n)\cap C^0(\R^n)$ for some $p\in(0,1)$ be positive with $\ir u_0=1$  
  Then $K\in L^1((0,\infty))$, $G,g \in C^1((0,\infty))\cap C^0([0,\infty))$, 
  $L\in C^0((0,\infty))\cap L^1_{loc}([0,\infty))$, $u\in C^{2,1}(\R^n\times(0,\infty))\cap C^0(\R^n\times[0,\infty))$ and 
  \begin{align}
	\label{eq:identity_G} 
	G(s)=&\frac{1}{H'(s)}&& \mbox{ for all } s>0,\\
	\label{eq:identity_hstrichg} 
	h'(t)=&g(t)&&\mbox{ for all } t>0,\\
	\label{eq:identity_GstrichG2K} 
	G'(s)=&G^2(s)K(s)&&\mbox{ for all } s>0,\\
	\label{eq:identity_LG2K} 
	L(t)=&G^2(h(t))K(h(t))&&\mbox{ for all } t>0,\\
	\label{eq:identity_gstrichgL} 
	g'(t)=&g(t)L(t)&&\mbox{ for all } t>0.
  \end{align}
\end{lem}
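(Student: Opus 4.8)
The plan is to establish the five identities essentially in the order listed, since each feeds into the next, and to treat the regularity assertions as byproducts of the explicit formulas involved.

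First I would record the integrability and regularity facts. Under the stated hypotheses on $u_0$, Lemma \ref{lem8} gives continuity of $K$ on $(0,\infty)$ and Lemma \ref{lem5} (applied with $p:=1$) gives $K\in L^1((0,\infty))$ together with the mass identity $\ir v(\cdot,s)=\ir u_0-\int_0^s K(\tau)d\tau=1-\int_0^s K(\tau)d\tau$. From the definition \eqref{def:G} of $G$ and the expression \eqref{eq:Hstrich} for $H'$ proved in Lemma \ref{lem:h}, the identity \eqref{eq:identity_G}, namely $G(s)=1/H'(s)$, is then immediate, and positivity of $H'$ shows $G$ is well-defined and positive; since $K$ is continuous, $G\in C^1((0,\infty))\cap C^0([0,\infty))$ and, by Lemma \ref{lem:h}, $h\in C^2((0,\infty))\cap C^0([0,\infty))$, whence $g=G\circ h\in C^1((0,\infty))\cap C^0([0,\infty))$ follows by composition.

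Next I would chain the elementary calculus identities. Differentiating \eqref{def:G} gives $G'(s)=K(s)/(1-\int_0^s K)^2=G^2(s)K(s)$, which is \eqref{eq:identity_GstrichG2K}. For \eqref{eq:identity_hstrichg}, combine the first formula in \eqref{eq:hstrich} with \eqref{eq:Hstrich} and \eqref{eq:identity_G}: $h'(t)=1/H'(h(t))=G(h(t))=g(t)$, which also matches the definition \eqref{def_g} of $g$. The identity \eqref{eq:identity_gstrichgL} for $g'$ then follows by differentiating $g=G\circ h$ via the chain rule, $g'(t)=G'(h(t))h'(t)=G^2(h(t))K(h(t))\cdot g(t)$, so that once \eqref{eq:identity_LG2K} is in hand, $g'(t)=g(t)L(t)$ is automatic.

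The main obstacle is \eqref{eq:identity_LG2K}, i.e.\ relating the Dirichlet integral $L(t)=\ir|\nabla u(\cdot,t)|^2$ of the transformed function $u$ to that of $v$. Here I would use the definition \eqref{def_u}, $u(x,t)=g(t)v(x,h(t))$, so that $\nabla u(\cdot,t)=g(t)\nabla v(\cdot,h(t))$ and hence $L(t)=g^2(t)\ir|\nabla v(\cdot,h(t))|^2=g^2(t)K(h(t))=G^2(h(t))K(h(t))$, using $g=G\circ h$; this simultaneously yields \eqref{eq:identity_LG2K} and the needed relation. The continuity $L\in C^0((0,\infty))$ then follows from \eqref{eq:identity_LG2K} because $K$ is continuous, $h\in C^2$, and $G\in C^1$, while local integrability near $0$, i.e.\ $L\in L^1_{loc}([0,\infty))$, I would extract from the substitution $t=H(s)$: since $dt=H'(s)ds=G^{-1}(s)ds$ and $L(H(s))=G^2(s)K(s)$, one gets $\int_0^{H(s)}L(t)dt=\int_0^s G(\sigma)K(\sigma)d\sigma$, and the boundedness of $G$ on compact subintervals of $[0,\infty)$ (where $\int_0^\sigma K<1$) together with $K\in L^1$ controls this integral. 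Finally, the regularity $u\in C^{2,1}(\R^n\times(0,\infty))\cap C^0(\R^n\times[0,\infty))$ is inherited from the smoothness of $v$ (Lemma \ref{lem0}) and of $g,h$ via \eqref{def_u}, the only delicate point being continuity up to $t=0$, which holds since $g$ and $h$ are continuous on $[0,\infty)$ with $h(0)=0$, $g(0)=G(0)=1$.
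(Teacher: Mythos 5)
Your proposal is correct and follows essentially the same route as the paper's proof: both derive \eqref{eq:identity_G}--\eqref{eq:identity_gstrichgL} from the explicit formulas via the chain rule, obtain $K\in L^1((0,\infty))$ and its continuity from Lemma \ref{lem5} (with $p:=1$) and Lemma \ref{lem8} through Lemma \ref{lem:h}, and get \eqref{eq:identity_LG2K} by pulling $g(t)$ inside the gradient in \eqref{def_u}. The only difference is cosmetic: you spell out the $L^1_{loc}$ property of $L$ via the substitution $t=H(s)$, which the paper asserts directly from the same regularity facts, and you differentiate \eqref{def:G} rather than $G=1/H'$ to get \eqref{eq:identity_GstrichG2K}.
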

\begin{proof}%\proof
  Since $H'(s)=1-\int_0^s K(\sigma)d\sigma$, obviously $G(s)=\frac1{H'(s)}$ for $s>0$ and $G\in C^1((0,\infty))$ follows 
  from Lemma \ref{lem:h}.
  Lemma \ref{lem5} warrants that $K\in L^1((0,\infty))$, admitting the conclusion that $G\in C^0([0,\infty))$. 
  The asserted regularity of $g$ thereupon becomes an immediate consequence of \eqref{def_g}. 
  Recalling that $v\in C^{2,1}(\R^n\times(0,\infty))\cap C^0(\R^n\times[0,\infty))$ and 
  $h\in C^1((0,\infty))\cap C^0([0,\infty))$, we find that 
  $u\in C^{2,1}(\R^n\times(0,\infty))\cap C^0(\R^n\times[0,\infty))$.\\
  By \eqref{eq:hstrich}, \eqref{eq:identity_G} and \eqref{def_g}, \(  h'(t)=\frac1{H'(h(t))}=G(h(t))=g(t)\) 
  for all $t\in(0,\infty)$. Straightforward application of the chain rule combined with \eqref{eq:identity_G} 
  moreover shows that 
  \bas
  	G'(s) = -\frac{1}{(H'(s))^2} H''(s) = G^2(s) K(s), \qquad s\in(0,\infty).
  \eas
  According to the definitions of $K$, $g$ and $u$, for any $t>0$ we have 
  \begin{align*}
  	G^2(h(t))K(h(t)) = g^2(t) \ir |\na v(x,h(t)|^2 dx =\ir |\na (g(t)v(x,h(t)))|^2 dx = \ir |\na u(x,t)|^2 dx= L(t)
  \end{align*}
  and from the known regularity properties of  $G$, $K$, $h$ we can infer $L\in C^0((0,\infty))\cap L^1_{loc}([0,\infty))$. 
  Differentiation of \eqref{def_g} together with 
  \eqref{eq:identity_GstrichG2K}, \eqref{eq:identity_LG2K} leads to 
  \begin{align*}
	g'(t)=G'(h(t))h'(t)=G^2(h(t))K(h(t)) h'(t)= L(t) g(t), \qquad t\in(0,\infty). %\qedhere
  \end{align*}
\end{proof}
%\qed

%
%
%
\begin{lem}\label{lem9}
  Suppose that for some $p\in (0,1)$, 
  $u_0\in L^p(\R^n)\cap C^0(\R^n)$ is positive and satisfies $\ir u_0=1$ as well as
  $u_0(x)\to 0$ as $|x|\to\infty$.
  Let $v$ denote the minimal solution of (\ref{0v}) emanating from $v_0:=u_0$, and let $u$ be defined by (\ref{def_u})
  with $g$ and $h$ given by (\ref{def_g}) and (\ref{def_h}).
  Then $u$ is a  positive classical solution of (\ref{0}) in $\R^n\times (0,\infty)$ 
  with the property that
  \be{9.1}
	\ir u(\cdot,t)=1
	\qquad \mbox{for all } t>0.
  \ee
  Moreover, for all $T>0$ we have
  \be{9.2}
	\int_0^T \ir u|\Delta u|^2 <\infty
  \ee
  and
  \be{9.3}
	\int_0^T \ir u|\nabla u|^2 < \infty.
  \ee
\end{lem}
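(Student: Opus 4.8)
The plan is to verify directly that the function $u$ defined by \eqref{def_u} solves \eqref{0}, using the transformation identities already assembled in Lemma \ref{lem:reg_and_identities}. The regularity $u\in C^{2,1}(\R^n\times(0,\infty))\cap C^0(\R^n\times[0,\infty))$ and positivity are immediate from that lemma and from the fact that $g>0$ and $v>0$. To check the PDE, I would compute $u_t$ via the product and chain rules: from \eqref{def_u}, $u_t(x,t)=g'(t)v(x,h(t))+g(t)v_s(x,h(t))h'(t)$. Substituting the identities $h'=g$ (from \eqref{eq:identity_hstrichg}), $g'=gL$ (from \eqref{eq:identity_gstrichgL}), and $v_s=v\Delta v$, together with $\Delta u(\cdot,t)=g(t)\Delta v(\cdot,h(t))$, the two terms should reorganize into $u\Delta u+Lu$. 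The final step of the verification is to recognize that $L(t)=\ir|\nabla u(\cdot,t)|^2$ by \eqref{def:L}, so that $Lu=u\ir|\nabla u|^2$, which is exactly the nonlocal reaction term in \eqref{0}. The required membership $\nabla u(\cdot,t)\in L^2(\R^n)$ for each $t>0$ and the continuity of $t\mapsto\ir|\nabla u(\cdot,t)|^2$ (needed for Definition \ref{defi11}) follow from Lemma \ref{lem8} applied to $v$ and the scaling $\nabla u(\cdot,t)=g(t)\nabla v(\cdot,h(t))$.

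Next I would establish the mass identity \eqref{9.1}. Here the key is the relation $G(s)=1/H'(s)=1/\ir v(\cdot,s)$ coming from \eqref{eq:identity_G} and \eqref{eq:Hstrich}: the factor $g(t)=G(h(t))$ was precisely engineered to compensate for the decay of the mass $\ir v(\cdot,s)$ along the flow \eqref{0v}. Thus
\[
  \ir u(\cdot,t)=g(t)\ir v(\cdot,h(t))=G(h(t))\cdot H'(h(t))=1
  \qquad\text{for all }t>0,
\]
using $G=1/H'$. I should confirm that $\ir v(\cdot,h(t))$ is finite and equals $H'(h(t))$, which is exactly the content of Lemma \ref{lem5} applied with $p=1$ (the hypotheses \eqref{10.1}--\eqref{10.3} guarantee $u_0=v_0\in L^1(\R^n)$ by interpolation between $L^p$ and $L^\infty$, or by the decay \eqref{8.1} combined with $u_0\in L^p$).

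The finiteness claims \eqref{9.2} and \eqref{9.3} should reduce, again by the transformation, to integrability statements for $v$ over finite $s$-intervals. Since $h$ maps $[0,T]$ to a bounded interval $[0,h(T)]$ with $h'=g$ bounded on $[\delta,T]$, changing variables $t\mapsto s=h(t)$ converts $\int_0^T\ir u|\Delta u|^2\,dt$ into a constant multiple (with $g$-dependent weights) of $\int_0^{h(T)}\ir v|\Delta v|^2\,ds$, and similarly \eqref{9.3} into $\int_0^{h(T)}\ir v|\nabla v|^2\,ds$. The latter is finite by Lemma \ref{lem5} with $p=2$, and the former is finite by Lemma \ref{lem7}, provided one controls the behavior near $s=0$ where the weights and $v(\cdot,s)$ could degenerate. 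I expect the main obstacle to lie precisely in this near-$t=0$ (equivalently near-$s=0$) integrability, where $g$ is merely continuous up to $t=0$ and the $W^{1,2}$-regularity of $v(\cdot,s)$ guaranteed by Lemma \ref{lem8} must be invoked on the whole range $s>0$; one must check that the weights $g,g^2$ stay bounded on $[0,h(T)]$ (which holds since $g\in C^0([0,\infty))$ by Lemma \ref{lem:reg_and_identities}) and that Lemma \ref{lem7}, whose hypotheses require $v(\cdot,s_2)\in W^{1,2}$ at some $s_2>0$, can be applied with $s_2\searrow0$ to cover the integral down to the origin without a nonintegrable blow-up.
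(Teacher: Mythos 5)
Your proposal follows the paper's proof essentially step for step: the regularity statement and the identities $h'=g$, $g'=gL$ and $L(t)=\ir|\nabla u(\cdot,t)|^2$ from Lemma \ref{lem:reg_and_identities} are exactly what the paper uses to verify the PDE; the mass identity \eqref{9.1} is obtained there in the same way, namely from Lemma \ref{lem5} with $p=1$ (in the form $\ir v(\cdot,s)=1-\int_0^s K(\sigma)\,d\sigma$, i.e.\ $G=1/H'$); the continuity of $t\mapsto\ir|\nabla u(\cdot,t)|^2$ required by Definition \ref{defi11} comes, as you say, from Lemma \ref{lem8} through the scaling; and \eqref{9.3} reduces, as you indicate, to $\int_0^{h(T)}\ir v|\nabla v|^2\,ds<\infty$, which is Lemma \ref{lem5} with $p=2$, applicable because $u_0\in L^p(\R^n)\cap L^\infty(\R^n)\subset L^2(\R^n)$.

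The step you flag but do not close --- integrability in \eqref{9.2} down to $t=0$ --- is a genuine issue, and it is one the paper itself glosses over: its proof of \eqref{9.2} consists of citing \eqref{def_u}, Lemma \ref{lem7} and Lemma \ref{lem5}, which only yields $u|\Delta u|^2\in L^1(\R^n\times(t_0,T))$ for each $t_0>0$, since Lemma \ref{lem7} requires $v(\cdot,s_2)\in W^{1,2}(\R^n)$ at some positive time $s_2$ and its estimate involves $\ir|\nabla v(\cdot,s_2)|^2$, over which there is no control as $s_2\searrow 0$. In fact your worry cannot be removed under the stated hypotheses: if $\int_0^{s^*}\ir v|\Delta v|^2$ were finite for some $s^*>0$, then repeating the computation \eqref{7.2} with a cut-off equal to $1$ on $B_\rho$ and vanishing outside $B_{\rho+1}$, and using $v|\nabla v|^2\in L^1(\R^n\times(0,\infty))$ from Lemma \ref{lem5}, one obtains a bound for $\ir|\nabla v(\cdot,s_2)|^2$ that is uniform in $s_2\in(0,s^*)$; weak $L^2$-compactness of $\nabla v(\cdot,s_2)$ combined with $v(\cdot,s_2)\to u_0$ locally uniformly then forces $\nabla u_0\in L^2(\R^n)$. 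Since $u_0$ is only assumed continuous (one may take admissible unit-mass data containing a Weierstrass-type perturbation not in $H^1_{loc}(\R^n)$), the inequality \eqref{9.2} with lower limit $0$ actually requires an additional hypothesis such as $\nabla u_0\in L^2(\R^n)$. What your argument (and the paper's) does prove is $\int_{t_0}^T\ir u|\Delta u|^2<\infty$ for all $0<t_0<T$, and this weaker statement is all that is used later (see the proof of Proposition \ref{prop103}). So you reproduced the paper's argument and, to your credit, isolated precisely its weakest point; the honest fix is to weaken \eqref{9.2} to integrals over $(t_0,T)$ with $t_0>0$, not to try to push Lemma \ref{lem7} to $s_2=0$.
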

\proof
  From Lemma \ref{lem:reg_and_identities} we know that the function $u$ satisfies 
  $u\in C^{2,1}(\R^n\times(0,\infty))\cap C^0(\R^n\times[0,\infty))$, moreover (\ref{9.2}) and (\ref{9.3}) are valid 
  according to \eqref{def_u}, Lemma \ref{lem7} and Lemma \ref{lem5}, the latter again being applicable since 
  $u_0\in L^2(\R^n)$ due to our assumptions.
  As $g(0)=G(h(0))=G(0)=1$, we have
  \[
  	u(x,0)=g(0)v(x,h(0))=v(x,0)= u_0(x),
  \]
  whereas by \eqref{0v}, \eqref{eq:identity_hstrichg}, \eqref{eq:identity_gstrichgL} and \eqref{def_u} we find that
  \bas
  	u_t(x,t)
	&=&   g(t) v_s(x,h(t))h'(t) + g'(t) v(x,h(t))\\ 
  	&=& g(t) v(x,h(t)) \Delta v(x,h(t)) g(t) + g(t)L(t) v(x,h(t))\\
  	&=& u(x,t) \Delta u(x,t) +  u(x,t) L(t)
	\qquad \mbox{for all $x\in\R^n$ and } t>0,
  \eas
  and that thus \eqref{0} holds.
  To see that the total mass remains constant,
  again letting $s:=h(t)$ we note that $u(x,t)=G(s)v(x,s)$ for all $x\in\R^n$ and $t>0$ and hence, by Lemma \ref{lem5}, indeed
  \bas
  	\ir u(x,t)=G(s) \ir v(x,s) 
	= \frac{1}{1-\int_0^s K(\sigma) d\sigma} \left[\ir u_0 - \int_0^s K(\sigma) d\sigma\right]=1 
  \eas
  for all $t>0$.
\qed
 
 %\red{Beschr\"anktheit von $u$ sehe ich hier nicht. Der Vorfaktor $G$ ist unbeschr\"ankt, das macht es schwer. (Das war anders, als wir noch mit dem $G_0\in(0,1)$ in der Definition von $G$ gearbeitet haben -- aber da passte dann die Massenerhaltung nicht...) - Die etwas feineren Absch\"atzungen f\"ur dieses Konstrukt $\frac{v}{|v|_{L^1}}$ aus \cite{fast_growth1} scheinen f\"ur gro\ss e $t$ auf $ct^{-\eps} < v(t) <Ct^{+\eps}$ zu fuehren.}
%
%
%
%
%
%
\proofc of Theorem \ref{theo10}. \quad
  All parts of the statement are immediate from Lemma \ref{lem9}.
\qed
\mysection{Decay of $\frac{\E(t)}{t}$ for arbitrary initial data}\label{sect4}
%\mysection{Unconditional decay of 	%$\ir |\nabla u|^2$ and of 
%$\frac{\E(t)}{t}$. Proof of Proposition \ref{prop103}}
%
%
%
%
%
%
%
%
In the preparatory Lemma \ref{lem101} and Lemma \ref{lem102}, we shall refer to the definition of the functions
$\zeta _R\in W^{1,\infty}(\R^n)$ which for $R>0$ we introduce by letting
\be{xi}
	\zeta _R(x):=\left\{ \begin{array}{ll}
	1 \qquad & \mbox{if } |x|\le R, \\[1mm]
	R+1-|x| \qquad & \mbox{if } R<|x|<R+1, \\[1mm]
	0 \qquad & \mbox{if } |x|\ge R+1.
	\end{array} \right.
\ee
By means of a corresponding cut-off procedure we can firstly provide a clean argument for the following functional
inequality for nonnegative (sub-)unit-mass functions enjoying a certain additional regularity property.
\begin{lem}\label{lem101}
  Let $\varphi\in C^2(\R^n)$ be nonnegative and such that
  \be{101.2}
	\ir \varphi \le 1
  \ee
  as well as
  \be{101.1}
	\ir \varphi|\nabla\varphi|^2 < \infty.
  \ee
  Then 
  \be{101.3}
	\bigg\{ \ir |\nabla\varphi|^2 \bigg\}^2 \le \ir \varphi |\Delta\varphi|^2.
  \ee
\end{lem}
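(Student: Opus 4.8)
The inequality to be established is
\be{plan1}
	\bigg\{ \ir |\nabla\varphi|^2 \bigg\}^2 \le \ir \varphi |\Delta\varphi|^2,
\ee
and the natural starting point is the integration-by-parts identity $\ir |\nabla\varphi|^2 = -\ir \varphi\Delta\varphi$, valid once boundary contributions at infinity are shown to vanish. Granting this, an application of the Cauchy--Schwarz inequality to the factorization $\varphi\Delta\varphi = \sqrt{\varphi}\cdot(\sqrt{\varphi}\,\Delta\varphi)$ yields
\bas
	\bigg\{ \ir |\nabla\varphi|^2 \bigg\}^2
	= \bigg\{ \ir \varphi\Delta\varphi \bigg\}^2
	\le \bigg\{ \ir \varphi \bigg\} \cdot \bigg\{ \ir \varphi|\Delta\varphi|^2 \bigg\}
	\le \ir \varphi|\Delta\varphi|^2,
\eas
where the final step uses the unit-mass constraint (\ref{101.2}). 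The hypothesis (\ref{101.1}) guarantees that the product $\sqrt{\varphi}\cdot(\sqrt{\varphi}\,\Delta\varphi)$ is legitimately controlled so that $\ir|\nabla\varphi|^2$ is finite and the Cauchy--Schwarz step is applied to genuinely integrable quantities.

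The plan, therefore, is to make the above formal computation rigorous, and the entire difficulty is concentrated in the integration by parts on the unbounded domain $\R^n$: one must justify $\ir |\nabla\varphi|^2 = -\ir \varphi\Delta\varphi$ without any a priori decay of $\varphi$ beyond the integrability afforded by (\ref{101.2}) and (\ref{101.1}). I would carry this out by the cut-off procedure signalled in the text, multiplying by $\zeta_R^2$ (with $\zeta_R$ as in (\ref{xi})) and integrating by parts on the support of $\zeta_R$, which produces
\bas
	\ir \zeta_R^2 |\nabla\varphi|^2
	= -\ir \zeta_R^2 \varphi\Delta\varphi
	- 2\ir \zeta_R\varphi\,\nabla\varphi\cdot\nabla\zeta_R.
\eas
Since $|\nabla\zeta_R|\le 1$ is supported in $B_{R+1}\setminus B_R$, the error term is dominated by $2\int_{B_{R+1}\setminus B_R}\varphi|\nabla\varphi|$, which I would bound via Young's inequality by a combination of $\int_{B_{R+1}\setminus B_R}\varphi|\nabla\varphi|^2$ and $\int_{B_{R+1}\setminus B_R}\varphi$; both tails tend to $0$ as $R\to\infty$ thanks to (\ref{101.1}) and (\ref{101.2}) together with dominated convergence.

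The main obstacle is thus precisely this control of the cut-off error term and the passage to the limit $R\to\infty$, which is where the regularity assumption (\ref{101.1}) is indispensable; once the boundary terms are shown to vanish, Beppo Levi's theorem upgrades $\ir\zeta_R^2|\nabla\varphi|^2 \nearrow \ir|\nabla\varphi|^2$ and the identity $\ir|\nabla\varphi|^2 = -\ir\varphi\Delta\varphi$ follows. The remaining Cauchy--Schwarz and unit-mass steps are then immediate, delivering (\ref{plan1}). I would also verify along the way that $\ir\varphi|\Delta\varphi|^2$ need only be finite for the statement to carry content — if it is infinite the inequality holds trivially — so no separate integrability hypothesis on $\Delta\varphi$ is required.
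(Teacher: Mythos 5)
Your proposal is correct and follows essentially the same route as the paper: the same cut-off functions $\zeta_R$, the same integration by parts with the error term supported in $B_{R+1}\setminus B_R$ controlled through (\ref{101.1}) and (\ref{101.2}), and the same final Cauchy--Schwarz step combined with the unit-mass bound. The only (cosmetic) difference is ordering: the paper applies Cauchy--Schwarz to $-\ir \zeta_R^2\varphi\Delta\varphi$ uniformly in $R$ and then lets $R\to\infty$, whereas you first pass to the limit to obtain the identity $\ir|\nabla\varphi|^2=-\ir\varphi\Delta\varphi$ — which is equally valid, since in the nontrivial case $\varphi\Delta\varphi\in L^1(\R^n)$ follows from the very same Cauchy--Schwarz estimate, so dominated convergence applies.
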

\proof
  We only need to consider the case when the expression on the right-hand side of (\ref{101.3}) is finite, in which
  using the cut-off functions from (\ref{xi}) we may integrate by parts to find that
  \be{101.4}
	\ir \zeta _R^2 |\nabla\varphi|^2
	= - \ir \zeta _R^2 \varphi\Delta\varphi
	- 2\ir \zeta _R (\nabla\zeta _R\cdot\nabla\varphi)\varphi
	\qquad \mbox{for all } R>0,
  \ee
  where by the Cauchy-Schwarz inequality and (\ref{101.2}),
  \bea{101.5}
	- \ir \zeta _R^2 \varphi\Delta\varphi
	&\le& \bigg\{ \ir \zeta _R^2 \varphi|\Delta\varphi|^2 \bigg\}^\frac{1}{2} \cdot
	\bigg\{ \ir \zeta _R^2 \varphi \bigg\}^\frac{1}{2} \nn\\
	&\le& \bigg\{ \ir \varphi|\Delta\varphi|^2 \bigg\}^\frac{1}{2}
	\qquad \mbox{for all } R>0,
  \eea
  because $\zeta _R^2 \le 1$ on $\R^n$.
  By the same token and the observations that $\nabla\zeta _R\equiv 0$ in $B_R \cup (\R^n\setminus B_{R+1})$ and
  $|\nabla\zeta _R|\le 1$ in $B_{R+1}\setminus B_R$, again using he Cauchy-Schwarz inequality and (\ref{101.2})
  we see that moreover
  \bas
	\bigg| - 2\ir \zeta _R(\nabla\zeta _R\cdot\nabla\varphi)\varphi \bigg|
	&\le& 2\int_{B_{R+1}\setminus B_R} \varphi |\nabla\varphi| \\
	&\le& 2 \bigg\{ \int_{B_{R+1}\setminus B_R} \varphi|\nabla\varphi|^2 \bigg\}^\frac{1}{2} \cdot
	\bigg\{ \int_{B_{R+1}\setminus B_R} \varphi \bigg\}^\frac{1}{2} \\
	&\le& 2 \bigg\{ \int_{B_{R+1}\setminus B_R} \varphi|\nabla\varphi|^2 \bigg\}^\frac{1}{2}
	\qquad \mbox{for all } R>0,
  \eas
  so that our assumption that $\varphi|\nabla\varphi|^2$ belong to $L^1(\R^n)$ warrants that
  \be{101.6}
	- 2\ir \zeta _R(\nabla\zeta _R\cdot\nabla\varphi)\varphi \to 0
	\qquad \mbox{as } R\to\infty.
  \ee
  Since finally
  \bas
	\ir \zeta _R^2 |\nabla\varphi|^2
	\to \ir |\nabla\varphi|^2
	\qquad \mbox{as } R\to\infty
  \eas
  by e.g.~Beppo Levi's theorem, we thus obtain from (\ref{101.4})-(\ref{101.6}) that
  indeed (\ref{101.3}) holds.
\qed
\begin{lem}\label{lem102}
  Let $t_0\ge 0$ and $T>t_0$, and suppose that
  \be{102.1}
	w\in C^{2,1}(\R^n\times [t_0,T]) 
	\qquad \mbox{with}\quad [t_0,T] \ni t \mapsto \ir |\nabla w(\cdot,t)|^2 \in C^0([t_0,T])
  \ee
  is a nonnegative classical solution of
  \be{102.2}
	w_t = w\Delta w + w\ir |\nabla w|^2,
	\qquad x\in\R^n, t \in [t_0,T],
  \ee
  which is such that
  \be{102.3}
	\int_{t_0}^T \ir w|\Delta w|^2 < \infty
  \ee
  and 
  \be{102.4}
	\int_{t_0}^T \ir w|\nabla w|^2 < \infty
  \ee
  as well as
  \be{102.5}
	\ir w(\cdot,t) \le 1
	\qquad \mbox{for all } t\in (t_0,T).
  \ee
  Then 
  \be{102.6}
	\ir |\nabla w(\cdot,T)|^2 \le \ir |\nabla w(\cdot,t_0)|^2.
  \ee
\end{lem}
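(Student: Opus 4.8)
The plan is to make rigorous the formal energy identity behind (\ref{energy}) and then to read off monotonicity from the functional inequality of Lemma \ref{lem101}. Writing $L(t):=\ir|\na w(\cdot,t)|^2$, the heart of the matter is the dissipation identity
\[
	\frac{1}{2}\frac{d}{dt}\ir|\na w|^2 = \Big(\ir|\na w|^2\Big)^2 - \ir w|\Delta w|^2,
\]
which I would obtain as follows. Differentiating and integrating by parts gives $\frac{1}{2}\frac{d}{dt}\ir|\na w|^2 = -\ir w_t\Delta w$; inserting the equation (\ref{102.2}) in the form $w_t=w\Delta w + wL$ and using $\ir w\Delta w = -\ir|\na w|^2 = -L$ turns this into $-\ir w|\Delta w|^2 + L^2$. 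Crucially, this computation never divides by $w$, so it is insensitive to the possible vanishing of the nonnegative solution, and the reaction term contributes the genuinely helpful sign $+L^2$.

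Since the domain is unbounded, I would carry out every integration by parts against the cut-off functions $\zeta_R$ from (\ref{xi}) and pass to the limit $R\to\infty$ only at the end. Concretely, I would start from the cut-off identity
\[
	\frac{1}{2}\ir\zeta_R^2|\na w(\cdot,T)|^2 - \frac{1}{2}\ir\zeta_R^2|\na w(\cdot,t_0)|^2 = \int_{t_0}^T\ir\zeta_R^2\,\na w\cdot\na w_t,
\]
integrate by parts in space to replace the right-hand side by $-\int_{t_0}^T\ir\zeta_R^2 w_t\Delta w - 2\int_{t_0}^T\ir\zeta_R w_t(\na\zeta_R\cdot\na w)$, and then substitute $w_t=w\Delta w+wL$ and integrate by parts once more in the term $\ir\zeta_R^2 w\Delta w$. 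This produces the main contributions $-\int_{t_0}^T\ir\zeta_R^2 w|\Delta w|^2$ and $+\int_{t_0}^T L\ir\zeta_R^2|\na w|^2$, together with several error integrals carried by $\na\zeta_R$ and hence supported in $B_{R+1}\setminus B_R$.

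The main obstacle is the control of these error terms as $R\to\infty$, and this is precisely where the integrability hypotheses (\ref{102.3}) and (\ref{102.4}) enter. Each error integral can be bounded, via the Cauchy--Schwarz inequality together with $\ir w\le 1$, by tails of the form $\int_{t_0}^T\int_{B_{R+1}\setminus B_R} w|\na w|^2$ and $\big(\int_{t_0}^T\int_{B_{R+1}\setminus B_R} w|\na w|^2\big)^{1/2}\big(\int_{t_0}^T\int_{B_{R+1}\setminus B_R} w|\Delta w|^2\big)^{1/2}$, each multiplied by the factor $L$, which is bounded because $t\mapsto L(t)$ is continuous on the compact interval by (\ref{102.1}); by dominated convergence these tails vanish. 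Letting $R\to\infty$ in the remaining terms via monotone and dominated convergence then yields the integrated identity
\[
	\frac{1}{2}\ir|\na w(\cdot,T)|^2 - \frac{1}{2}\ir|\na w(\cdot,t_0)|^2 = \int_{t_0}^T\Big\{\Big(\ir|\na w(\cdot,t)|^2\Big)^2 - \ir w|\Delta w|^2\Big\}\,dt.
\]

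Finally I would invoke Lemma \ref{lem101}: for almost every $t$ the slice $\varphi:=w(\cdot,t)$ is a nonnegative $C^2$ function with $\ir\varphi\le 1$ by (\ref{102.5}) and with $\ir\varphi|\na\varphi|^2<\infty$ by (\ref{102.4}), so $(\ir|\na\varphi|^2)^2\le\ir\varphi|\Delta\varphi|^2$ and the integrand above is nonpositive. Hence the left-hand side is $\le 0$, which is exactly (\ref{102.6}). A secondary technical point worth flagging is the justification of the very first cut-off identity, i.e.\ the differentiability of $t\mapsto\ir\zeta_R^2|\na w|^2$ with the expected derivative: where $w>0$ the equation is locally uniformly parabolic with continuous coefficients, so interior Schauder estimates render $w$ smooth and legitimize $\partial_t\na w=\na w_t$, while the compact support of $\zeta_R$ confines the argument to a region where this regularity is available.
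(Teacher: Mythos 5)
Your proposal is correct and follows essentially the same route as the paper: the same cut-off functions $\zeta_R$ from (\ref{xi}), the same integration by parts against $\zeta_R^2\nabla w$ after inserting the equation, the same control of the annular error terms via Cauchy--Schwarz, the tails provided by (\ref{102.3})--(\ref{102.4}), and the boundedness of $t\mapsto\ir|\nabla w(\cdot,t)|^2$ from (\ref{102.1}), and the same decisive use of Lemma \ref{lem101}. The only cosmetic difference is that you integrate by parts once more so as to pass to a limit \emph{identity} and invoke Lemma \ref{lem101} at the very end, whereas the paper estimates the term $\big\{\ir\zeta_R^2 w\Delta w\big\}\cdot\ir|\nabla w|^2$ by Cauchy--Schwarz together with Lemma \ref{lem101} already at the cut-off level and then lets $R\to\infty$, working with an inequality throughout.
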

\proof
  For $R>0$, we again take $\zeta _R\in W^{1,\infty}(\R^n)$ as defined in (\ref{xi}) and integrate by parts using
  (\ref{102.2}) to compute
  \bea{102.7}
	& & \hspace*{-30mm}
	\frac{1}{2} \ir \zeta _R^2 |\nabla w(\cdot,T)|^2
	- \frac{1}{2} \ir \zeta _R^2 |\nabla w(\cdot,t_0)|^2 \nn\\
	&=& \int_{t_0}^T \ir \zeta _R^2 \nabla w \cdot \nabla w_t \nn\\
	&=& - \int_{t_0}^T \ir \nabla \cdot (\zeta _R^2 \nabla w)w_t \nn\\
	&=& - \int_{t_0}^T \ir \zeta _R^2 w|\Delta w|^2
	+ \int_{t_0}^T \Big\{ \ir \zeta _R^2 w\Delta w \Big\} \cdot \Big\{ \ir |\nabla w|^2 \Big\} \nn\\
	& & -2\int_{t_0}^T \ir \zeta _R (\nabla\zeta _R\cdot\nabla w) w\Delta w
	-2\int_{t_0}^T \Big\{ \ir \zeta _R (\nabla\zeta _R\cdot\nabla w) w\Big\} \cdot \Big\{ \ir |\nabla w|^2 \Big\}.
  \eea
  Here by the Cauchy-Schwarz inequality and (\ref{102.5}), for all $t\in (t_0,T)$ we estimate, again using that $\zeta _R^2 \le 1$, %in estimating
  \bas
	\ir \zeta _R^2 w\Delta w
	&\le& \bigg\{ \ir \zeta _R^2 w|\Delta w|^2 \bigg\}^\frac{1}{2} \cdot
	\bigg\{ \ir \zeta _R^2 w \bigg\}^\frac{1}{2} \\
	&\le& \bigg\{ \ir w|\Delta w|^2 \bigg\}^\frac{1}{2}
	\qquad \mbox{for all  } R>0,
  \eas
  so that since Lemma \ref{lem101} in conjunction with (\ref{102.4}) guarantees that
  \bas
	\ir |\nabla w|^2 \le \bigg\{ \ir w|\Delta w|^2 \bigg\}^\frac{1}{2}
	\qquad \mbox{for a.e.~} t\in (t_0,T),
  \eas
  it follows that
  \be{102.8}
	\int_{t_0}^T \Big\{ \ir \zeta _R^2 w\Delta w \Big\} \cdot \Big\{ \ir |\nabla w|^2 \Big\}
	\le \int_{t_0}^T \ir w|\Delta w|^2
	\qquad \mbox{for all } R>0.
  \ee
  Next, once more since $\supp\nabla\zeta _R \subset B_{R+1}\setminus B_R$ and $|\nabla\zeta _R| \le 1$ a.e.~in $\R^n$,
  by means of the Cauchy-Schwarz inequality we find that
  %\bea{102.9}
 \begin{align}\label{102.9}
	-2 \int_{t_0}^T \ir \zeta _R^2 (\nabla\zeta _R\cdot\nabla w) w\Delta w
	&\le 2 \int_{t_0}^T \int_{B_{R+1}\setminus B_R} w|\nabla w|\cdot|\Delta w| \nn\\
	&\le 2 \bigg\{ \int_{t_0}^T \int_{B_{R+1}\setminus B_R} w|\Delta w|^2 \bigg\}^\frac{1}{2} \cdot
	\bigg\{ \int_{t_0}^T \int_{B_{R+1}\setminus B_R} w|\nabla w|^2 \bigg\}^\frac{1}{2}
%	\qquad \mbox{for all } R>0
\end{align}
%  \eea
  and that, again thanks to (\ref{102.5}),
  \bea{102.10}
	-2 \int_{t_0}^T \Big\{ \ir \zeta _R (\nabla\zeta _R\cdot\nabla w) w\Big\} \cdot \Big\{ \ir |\nabla w|^2 \Big\}
	&\le& 2\int_{t_0}^T \Big\{ \int_{B_{R+1}\setminus B_R} w|\nabla w| \Big\} \cdot \Big\{ \ir |\nabla w|^2 \Big\} \nn\\
	&\le& 2c_1 \int_{t_0}^T \int_{B_{R+1}\setminus B_R} w|\nabla w| \nn\\
	&\le& 2c_1 \bigg\{ \int_{t_0}^T \int_{B_{R+1}\setminus B_R} w|\nabla w|^2 \bigg\}^\frac{1}{2} \cdot
	\bigg\{ \int_{t_0}^T \int_{B_{R+1}\setminus B_R} w \bigg\}^\frac{1}{2} \nn\\
	&\le& 2c_1 \sqrt{T} \bigg\{ \int_{t_0}^T \int_{B_{R+1}\setminus B_R} w|\nabla w|^2 \bigg\}^\frac{1}{2}
%	\qquad \mbox{for all } R>0,
  \eea
  for all $R>0$,
  where $c_1:=\|\nabla w\|_{L^\infty((t_0,T);L^2(\Omega))}$ is finite according to the second regularity requirement
  contained in (\ref{102.1}).
  In summary, from (\ref{102.7})-(\ref{102.10}) we infer that
  \bea{102.11}
	\frac{1}{2} \ir \zeta _R^2 |\nabla w(\cdot,T)|^2
	+ \int_{t_0}^T \ir \zeta _R^2 w|\Delta w|^2
	&\le& \frac{1}{2} \ir \zeta _R^2 |\nabla w(\cdot,t_0)|^2 \nn\\
	& & + \int_{t_0}^T \ir w|\Delta w|^2 \nn\\
	& & + 2 \bigg\{ \int_{t_0}^T \int_{B_{R+1}\setminus B_R} w|\Delta w|^2 \bigg\}^\frac{1}{2} \cdot
	\bigg\{ \int_{t_0}^T \int_{B_{R+1}\setminus B_R} w|\nabla w|^2 \bigg\}^\frac{1}{2} \nn\\
	& & +2c_1 \sqrt{T} \bigg\{ \int_{t_0}^T \int_{B_{R+1}\setminus B_R} w|\nabla w|^2 \bigg\}^\frac{1}{2} 
	\quad \mbox{for all } R>0.
  \eea
  Now since Beppo Levi's theorem ensures that as $R\to\infty$ we have
  \bas
	\ir \zeta _R^2 |\nabla w(\cdot,t)|^2 \to \ir |\nabla w(\cdot,t)|^2
	\qquad \mbox{for all } t\in [t_0,T]
  \eas
  and
  \bas
	\int_{t_0}^T \ir \zeta _R^2 w|\Delta w|^2
	\to \int_{t_0}^T \ir w|\Delta w|^2,
  \eas
  and since the inclusions $w|\Delta w|^2 \in L^1(\R^n\times (t_0,T))$ and 
  $w|\nabla w|^2 \in L^1(\R^n\times (t_0,T))$ asserted by (\ref{102.3}) and (\ref{102.4}) entail that
  \bas
	\int_{t_0}^T \int_{B_{R+1}\setminus B_R} w|\Delta w|^2 \to 0
	\quad \mbox{and} \quad
	\int_{t_0}^T \int_{B_{R+1}\setminus B_R} w|\nabla w|^2 \to 0
	\qquad \mbox{as } R\to\infty,
  \eas
  on taking $R\to\infty$ in (\ref{102.11}) we conclude that
  \bas
	\frac{1}{2} \ir |\nabla w(\cdot,T)|^2
	+ \int_{t_0}^T \ir w|\Delta w|^2
	\le
	\frac{1}{2} \ir |\nabla w(\cdot,t_0)|^2
	+ \int_{t_0}^T \ir w|\Delta w|^2,
  \eas
  and that hence (\ref{102.6}) is valid.
\qed
\begin{lem}\label{lem:w}
  Let $R>0$. The the solution $\varphi_R\in C^2(\Bbar_R)$ of $-\Delta \varphi_R=1$ in $B_R$ with 
  $\varphi_R=0$ on $\partial B_R$ is given by
  \bas
	\varphi_R(x):=\frac{R^2-|x|^2}{2n}, \qquad x\in \Bbar_R,
  \eas
  and hence for each $R>0$ we have
  \bas
  	\varphi_R(x)=R^2 \varphi_1\left(\frac xR\right) \quad \mbox{for all }
	x\in B_R \quad\mbox{and}\quad 
	\norm[L^1(B_R)]{\varphi_R}=\frac{2\omega_n}{n^2(n+1)} R^{n+2},
  \eas
  where $\omega_n:=|\partial B_1|$.
\end{lem}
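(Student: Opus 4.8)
The plan is to proceed by direct verification rather than by solving the Dirichlet problem from scratch, since linearity together with the maximum principle already guarantees that $-\Delta\varphi=1$ in $B_R$ with $\varphi=0$ on $\partial B_R$ admits exactly one classical solution; hence it suffices to check that the proposed $\varphi_R$ is a solution. First I would compute the Laplacian of the candidate directly from $\Delta|x|^2=2n$: since $\varphi_R(x)=\frac{1}{2n}(R^2-|x|^2)$ is a quadratic polynomial, one immediately gets $\Delta\varphi_R=-\frac{1}{2n}\Delta|x|^2=-1$ throughout $B_R$, while $\varphi_R\equiv 0$ on $\partial B_R$ is read off from $|x|=R$ there. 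Equivalently, one may invoke the radial form $\Delta\varphi=\varphi''+\frac{n-1}{r}\varphi'$ with $r=|x|$ to reach the same conclusion.

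For the self-similar identity I would substitute directly: evaluating the formula at radius $1$ gives $\varphi_1(y)=\frac{1-|y|^2}{2n}$, so that $R^2\varphi_1(x/R)=\frac{R^2}{2n}\bigl(1-\frac{|x|^2}{R^2}\bigr)=\frac{R^2-|x|^2}{2n}=\varphi_R(x)$ for $x\in B_R$; this is just the scaling invariance of the operator $-\Delta$ paired with the uniqueness noted above.

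For the $L^1$ norm I would use that $\varphi_R\ge 0$ throughout $B_R$ (because $|x|\le R$ there), so that $\norm[L^1(B_R)]{\varphi_R}=\int_{B_R}\varphi_R$, and then pass to polar coordinates. With $\omega_n=|\partial B_1|$ and $|\partial B_r|=\omega_n r^{n-1}$, this reduces the computation to the one-dimensional integral
\[
	\int_{B_R}\varphi_R=\frac{\omega_n}{2n}\int_0^R (R^2-r^2)\,r^{n-1}\,dr
	=\frac{\omega_n}{2n}\,R^{n+2}\Bigl(\frac1n-\frac1{n+2}\Bigr),
\]
which after simplification equals $\frac{\omega_n}{n^2(n+2)}R^{n+2}$.

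Since every step is a routine verification, there is no genuine obstacle here; the only point demanding care is the bookkeeping in the polar-coordinate integration. I note that my evaluation produces the factor $\frac{1}{n^2(n+2)}$ rather than the $\frac{2}{n^2(n+1)}$ appearing in the statement; a quick sanity check in dimension $n=1$, where $\int_{-R}^R\frac{R^2-x^2}{2}\,dx=\frac{2}{3}R^3$, confirms the value $\frac{\omega_n}{n^2(n+2)}R^{n+2}$ and suggests a typographical slip in the stated constant.
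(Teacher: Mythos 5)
Your verification is correct, and it supplies precisely what the paper omits: Lemma~\ref{lem:w} is stated without proof, being regarded as a routine fact, so the argument you give (uniqueness via the maximum principle, $\Delta|x|^2=2n$ for the PDE, direct substitution for the scaling identity, polar coordinates for the integral) is exactly the one left implicit. The substantive point is that your constant is the right one and the constant printed in the lemma is not: since $\varphi_R\ge 0$ on $B_R$,
\[
	\norm[L^1(B_R)]{\varphi_R}
	=\frac{1}{2n}\int_{B_R}\big(R^2-|x|^2\big)\,dx
	=\frac{\omega_n}{2n}\int_0^R \big(R^2-r^2\big)\,r^{n-1}\,dr
	=\frac{\omega_n}{2n}\cdot\frac{2\,R^{n+2}}{n(n+2)}
	=\frac{\omega_n}{n^2(n+2)}\,R^{n+2},
\]
whereas the statement asserts $\frac{2\omega_n}{n^2(n+1)}R^{n+2}$; your sanity check at $n=1$ (value $\tfrac{2}{3}R^3$ versus the claimed $2R^3$, using $\omega_1=2$) settles this, and $n=2$ confirms it as well ($\tfrac{\pi}{8}R^4$ versus the claimed $\tfrac{\pi}{3}R^4$). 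It is worth noting that where the lemma is actually invoked, in the proof of Lemma~\ref{lem:rapid_lowerest}, the paper writes $\frac{2\omega_n}{n^2(n+2)}R^{2+n}$, whose factor $(n+2)$ agrees with your computation although a spurious factor $2$ remains there too; since in that proof, as well as in the proof of Proposition~\ref{prop103} (where only $\int_{B_R}\varphi_R\to\infty$ as $R\to\infty$ is needed), the quantity enters solely as a fixed positive constant, the misprint is harmless for all downstream conclusions, but your corrected value is the one that should stand in the lemma.
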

\proofc of Proposition \ref{prop103}.\quad
  Since for all $t_0>0$ and $T>0$, Theorem \ref{theo10} asserts that $u\in C^{2,1}(\R^n \times [t_0,T])$ and that
  $[t_0,T] \ni t\mapsto \ir |\nabla u(\cdot,t)|^2$ is continuous, whereas Lemma \ref{lem9} says %\ref{lem5} and Lemma \ref{lem7}   say 
  that $u|\Delta u|^2$ and $u|\nabla u|^2$ belong to $L^1(\R^n\times (t_0,T))$, %\red{Wirklich? Wäre nicht der Verweis auf Lemma \ref{lem9} passender? Immerhin ist dort die in Rede stehende Lösung konstruiert worden (und es enthält direkt diese Aussage), während die genannten Lemmata nur $v$ betreffen.}
  the claimed monotonicity property is a consequence of Lemma \ref{lem102}.\\
  If (\ref{103.2}) was false, we could thus find $c_1>0$ such that $\ir |\nabla u(\cdot,t)|^2 \ge c_1$ for all
  $t>0$, which in view of (\ref{0}) would imply that
  \bas
	u_t \ge u\Delta u + c_1 u
	\qquad \mbox{for all $x\in\R^n$ and } t>0.
  \eas
  In order to argue from this to a contradiction to the fact that $(0,\infty)\ni t \mapsto \ir u(\cdot,t)$ is bounded,
  let us fix some conveniently large $R>0$ such that for the function $\varphi_R$ from Lemma \ref{lem:w} we have
%abbreviating
%  \be{103.33}
%	\varphi_R(x):=\frac{R^2-|x|^2}{2n}, \qquad x\in \bar B_R,
%  \ee
%  we have 
  \be{103.4}
	\frac{c_1}{2} \int_{B_R} \varphi_R >1.
  \ee
  Then since $u_0$ was assumed to be positive throughout $\R^n$, it is possible to find some suitably small 
  $c_2\in (0,\frac{c_1}{2})$ such that
  \be{103.5}
	u_0(x)>c_2\varphi_R(x)
	\qquad \mbox{for all } x\in \bar B_R,
  \ee
  whereupon we define
  \bas
	\uu(x,t):=y(t)\varphi_R(x),
	\qquad x\in\bar B_R, \ t\ge 0,
  \eas
  with
  \be{103.55}
	y(t):=c_2 e^{\frac{c_1}{2}t}, 
	\qquad t\ge 0.
  \ee
  Then (\ref{103.5}) warrants that $u(\cdot,0)>\uu(\cdot,0)$ in $\bar B_R$, whereas clearly $u>\uu=0$ on
  $\partial B_R\times [0,\infty)$. 
  Since writing $t_0:=\frac{2}{c_1} \ln \frac{c_1}{2c_2}>0$ we have 
  \be{103.6}
	y(t) \le \frac{c_1}{2} = y(t_0) \qquad \mbox{for all } t\in (0,t_0),
  \ee
  it particularly follows from (\ref{103.55}) that $y' =\frac{c_1}2 y \le c_1y-y^2$ on $(0,t_0)$ and that hence
  \bas
	\uu_t-\uu \Delta \uu - c_1 \uu
	&=& y' \varphi_R - y^2 \varphi_R \Delta \varphi_R - c_1 y \varphi_R \\
	&=& \Big\{ y' +y^2 - c_1 y \Big\} \cdot\varphi_R \\
	&\le& 0
	\qquad \mbox{in } B_R \times (0,t_0),
  \eas
  because $-\Delta\varphi_R=1$ in $B_R$.	% by (\ref{103.33}).
  Consequently, a comparison principle (\cite{wiegner}) ensures that $\uu\le u$ in $\bar B_R \times [0,t_0]$
  and that thus, thanks to the latter relation in (\ref{103.6}) and (\ref{103.4}),
  \bas
	\ir u(\cdot,t_0)
	\ge \int_{B_R} \uu(\cdot,t_0)
	= y(t_0) \int_{B_R} \varphi_R
	>1.
  \eas
  This is incompatible with the identity $\ir u(\cdot,t_0)=1$ obtained in Theorem \ref{theo10} and thereby 
  establishes (\ref{103.2}).
\qed
\mysection{Logarithmic growth of $\E$ for algebraically decaying data}\label{sect5}
In the case when $u_0$ satisfies algebraic decay conditions, thanks to our rather precise knowledge on
the transformation functions appearing in Definition \ref{def:fcts} and Lemma \ref{lem:h} we can obtain
the asymptotic properties of $\E$ claimed in Theorem \ref{theo200} in quite a straightforward manner
from the following known result on large time behavior in (\ref{0v}).
\begin{lem}\label{lem:v_estimate}
  Let $v_0\in C^0(\R^n)\cap L^\infty(\R^n)$ be positive, and let $v$ denote the solution of (\ref{0v})
  from Lemma \ref{lem0}.\\
  i) \ If there exist $\gamma>n$ and $C_0>0$ fulfilling  
  \bas
 	v_0(x)\geq C_0\cdot (1+|x|)^{-\gamma} \qquad \mbox{for all } x\in \R^n,
  \eas
  then 
  \bas
 	\norm[L^1(\R^n)]{v(\cdot,t)}\geq c t^{-(\gamma-n)/(\gamma+2)} \qquad \mbox{for all } t>1
  \eas  
  with some $c>0$.\abs
  ii) \ If $v_0\in L^p(\R^n)$ for some $p\in(0,1)$, then one can find
  $C>0$ such that 
  \bas
 	\norm[L^1(\R^n)]{v(\cdot,t)} \leq C t^{-(1-p)/(1+\frac{2p}n)} \qquad \mbox{for all }t>0.
  \eas
\end{lem}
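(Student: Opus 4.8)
The two assertions are asymptotics for the minimal solution of (\ref{0v}), and both are recorded in the literature (\cite{fast_growth1}, \cite{win_cauchy}); nevertheless let me indicate the self-contained arguments I would run. For the upper bound in part ii), the plan is to combine the $L^p$-contraction already at our disposal with an $L^\infty$-smoothing estimate. First, Lemma \ref{lem5} applied to the given $p\in(0,1)$ shows that $s\mapsto \ir v^p(\cdot,s)$ is nonincreasing, so $\ir v^p(\cdot,s)\le \ir v_0^p=:m_p$ for all $s>0$. Then I would interpolate against the sup-norm via
\[
	\ir v(\cdot,s) = \ir v^{1-p}(\cdot,s)\, v^p(\cdot,s)
	\le \|v(\cdot,s)\|_{L^\infty(\R^n)}^{1-p}\, m_p ,
\]
which reduces everything to the decay estimate $\|v(\cdot,s)\|_{L^\infty(\R^n)}\le C s^{-n/(n+2p)}$. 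The exponent is forced by the scaling $v\mapsto a\,v(bx,ab^2 s)$ of (\ref{0v}), under which $\ir v^p$ carries the weight $a^p b^{-n}$: this is precisely consistent with a bound $\|v(\cdot,s)\|_{L^\infty(\R^n)}\le C (\ir v_0^p)^{2/(n+2p)} s^{-n/(n+2p)}$. Raising this to the power $1-p$ and inserting it above yields exactly the claimed rate $t^{-(1-p)/(1+2p/n)}$, with $C$ depending on $\|v_0\|_{L^p(\R^n)}$.

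For the lower bound in part i), I would avoid any self-similar construction and instead reuse the eigenfunction subsolution from the proof of Lemma \ref{lem1}. Fix $R\ge 1$, let $\varphi_R$ and $\lambda_R$ be the principal Dirichlet eigenfunction (normalized by $\max\varphi_R=1$) and eigenvalue of $-\Delta$ on $B_R$, and set $\delta_R:=C_0(1+R)^{-\gamma}$, so that $\delta_R\le \inf_{B_R} v_0$. Then $\underline v(x,s):=\frac{\delta_R}{1+\lambda_R\delta_R s}\,\varphi_R(x)$ is, by the computation already carried out in Lemma \ref{lem1}, a subsolution of (\ref{0v}) on $B_R\times(0,\infty)$; it vanishes on $\partial B_R$ and satisfies $\underline v(\cdot,0)=\delta_R\varphi_R\le \delta_R\le v_0$ on $B_R$. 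Hence the comparison principle of \cite{wiegner} gives $v\ge \underline v$ on $\Bbar_R\times(0,\infty)$. Using $\lambda_R=\lambda_1(B_1) R^{-2}$ and $\int_{B_R}\varphi_R\ge c_n R^n$ and integrating over $B_R$ leads to
\[
	\ir v(\cdot,s) \ge \int_{B_R}\underline v(\cdot,s)
	\ge \frac{c\, R^{n-\gamma}}{1+c' R^{-(\gamma+2)}s}
	\qquad \mbox{for all } s>0 .
\]
The last step is to optimize in $R$: choosing $R:=s^{1/(\gamma+2)}$ balances the two terms in the denominator and produces the claimed rate $s^{-(\gamma-n)/(\gamma+2)}$ for $s>1$.

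The genuinely delicate point is the $L^\infty$-smoothing bound underlying part i\kern-0.1em i); this is where I expect the real work to lie. I would obtain it either by quoting the corresponding estimate of \cite{fast_growth1}, or, if a self-contained route is wanted, by a Moser-type iteration performed on the nondegenerate approximations $v_{R\eps}$ of (\ref{0vReps}), exploiting that the dissipation term $p^2\int\!\!\int v^{p-1}|\nabla v|^2$ controlled in Lemma \ref{lem5} supplies exactly the gradient information needed to feed the iteration, with the degeneracy handled through the monotone limits $v_{R\eps}\searrow v_R\nearrow v$. By contrast, part i) is essentially bookkeeping once the subsolution of Lemma \ref{lem1} is in hand; the only care required there is to verify the initial and boundary ordering so that \cite{wiegner} applies to the minimal solution, and to keep track of the $R$-dependence of $\lambda_R$ and $\int_{B_R}\varphi_R$ through the final optimization.
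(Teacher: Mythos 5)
Your treatment of part i) is correct, complete, and genuinely different from the paper's: the paper disposes of both halves of Lemma \ref{lem:v_estimate} by pure citation, to \cite[Theorem 1.6 (i)]{fast_growth1} for i) and to \cite[Theorem 1.2]{fast_growth1} for ii), whereas you give a self-contained proof of i) using only tools already present in the paper. Your subsolution $\underline{v}(x,s)=\frac{\delta_R}{1+\lambda_R\delta_R s}\,\varphi_R(x)$ is exactly the construction from the proof of Lemma \ref{lem1} (and close in spirit to the rescaled comparison in Lemma \ref{lem:rapid_lowerest}), and the bookkeeping checks out: $\lambda_R=\lambda_1(B_1)R^{-2}$, $\int_{B_R}\varphi_R=R^n\int_{B_1}\varphi_1$, $\delta_R=C_0(1+R)^{-\gamma}\ge C_0 2^{-\gamma}R^{-\gamma}$ for $R\ge 1$, and the choice $R:=s^{1/(\gamma+2)}$ balances the denominator and yields $\ir v(\cdot,s)\ge c\,s^{-(\gamma-n)/(\gamma+2)}$ for $s>1$. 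The only repair needed is cosmetic: as in Lemma \ref{lem1}, apply the comparison principle of \cite{wiegner} with $\delta<\delta_R$ and let $\delta\nearrow\delta_R$, so that the initial ordering is strict. What this buys is an argument for i) that does not lean on \cite{fast_growth1} at all.

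Part ii), by contrast, has a genuine gap. The reduction itself is fine --- Lemma \ref{lem5} gives $\ir v^p(\cdot,s)\le\ir v_0^p$, and $\ir v\le\|v\|_{L^\infty(\R^n)}^{1-p}\ir v^p$ is precisely the interpolation scheme the paper uses in Lemma \ref{lem:rapiddecr_l1} --- but it shifts the entire content of the claim onto the smoothing estimate $\|v(\cdot,s)\|_{L^\infty(\R^n)}\le Cs^{-n/(n+2p)}$, which you never establish. The scaling computation only identifies the admissible exponents; it proves nothing. Moreover, the deferral to the literature is weaker than it looks: the $L^\infty$ decay estimate of this type that the paper actually invokes, namely \cite[Theorem 1.3]{fast_growth2} in Lemma \ref{lem:rapiddecr_l1}, requires $v_0$ to be radially symmetric and nonincreasing in $|x|$, so an unconditional $L^\infty$ smoothing bound for general positive $v_0\in C^0(\R^n)\cap L^\infty(\R^n)\cap L^p(\R^n)$ is not available off the shelf; the paper sidesteps this by citing the $L^1$ statement of \cite[Theorem 1.2]{fast_growth1} directly (and if you are willing to cite \cite{fast_growth1} anyway, you should do the same, which makes your reduction superfluous). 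Finally, note that your Moser-iteration sketch cannot proceed by treating (\ref{0v}) as a porous-medium-type equation: writing $v_s=\frac12\Delta(v^2)-|\nabla v|^2$ exhibits $v$ as a subsolution of the PME, but for the PME the rate $t^{-n/(n+2p)}$ with $p<1$ is false at large times, since the Barenblatt solution has $\|\cdot\|_{L^\infty}\sim t^{-n/(n+2)}$ with conserved mass while $n/(n+2p)>n/(n+2)$. So the estimate you need hinges on the mass-dissipating absorption term $-|\nabla v|^2$, and making an iteration exploit that term is exactly the nontrivial work behind the results of \cite{fast_growth1} and \cite{fast_growth2}, not a routine adaptation.
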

\proof 
  For i), see \cite[Theorem 1.6 (i)]{fast_growth1}; for ii), we refer to \cite[Theorem 1.2]{fast_growth1}.
\qed
Indeed, this quite directly entails the claimed properties of $\E$ when $u_0$ decays as indicated in Theorem
\ref{theo200}.\abs
\proofc of Theorem \ref{theo200}. \quad
  i) \ Letting $v$ denote the solution of (\ref{0v}) from Lemma \ref{lem0} emanating from $v_0:=u_0$,
  from Lemma \ref{lem:v_estimate} we obtain $c_1>0$ such that 
  $\norm[L^1(\R^n)]{v(\cdot,s)}\geq c_1 s^{-\alpha}$ for all $s>1$, where $\alpha:=\frac{\gamma-n}{\gamma+2}$, 
  so that by Theorem \ref{theo10}, \eqref{def_u} and \eqref{eq:identity_hstrichg} we have
  \[
 	1=\norm[L^1(\R^n)]{u(\cdot,t)}=g(t)\norm[L^1(\R^n)]{v(\cdot,h(t))}
	\geq c_1 g(t) h^{-\alpha}(t)=c_1 h'(t) h^{-\alpha}(t)
  \]
  whenever $h(t)>1$, that is, whenever $t>H(1)=:t_0$. This shows that 
  \begin{equation}\label{eq:ghalpha}
 	g(t)=h'(t)\leq \frac1{c_1} h^\alpha(t) \qquad \mbox{ for all }t>t_0
  \end{equation}
  and that thus
  \[
 	1\geq c_1 h'(t) h^{-\alpha}(t)=\frac {c_1}{1-\alpha} (h^{1-\alpha})'(t) \qquad \mbox{ for all } t>t_0,
  \]
  meaning that
  \[
	h^{1-\alpha}(t)-h^{1-\alpha}(t_0)=\int_{t_0}^t (h^{1-\alpha})'(\tau)d\tau 
	\leq \frac{1-\alpha}{c_1} (t-t_0)\qquad \mbox{ for all } t>t_0,
  \]
  that is,
  \[
  h(t) \leq \left(\frac{1-\alpha}{c_1} t + h^{1-\alpha}(t_0)\right)^{\frac{1}{1-\alpha}}\qquad \mbox{ for all } t>t_0.
  \]
  Accordingly, by \eqref{eq:ghalpha} and the validity of $(a+b)^\alpha\leq 2^\alpha(a^\alpha+b^\alpha)$ 
  for all $a,b,\alpha\ge 0$,
  \begin{equation}\label{eq:gleq}
	g(t)\leq \frac{1}{c_1}\left(\frac{1-\alpha}{c_1} t +h^{1-\alpha}(t_0)\right)^{\frac{\alpha}{1-\alpha}} 
	\leq \frac{2^\alpha(1-\alpha)^{\frac{\alpha}{1-\alpha}}}{c_1^{\frac1{1-\alpha}}}t^{\frac{\alpha}{1-\alpha}}
	+\frac{2^\alpha}{c_1}h^\alpha(t_0)\qquad \mbox{ for all } t>t_0.
  \end{equation}
  As $g(t)=e^{\int_0^t L(\tau)d\tau}$ for all $t>0$ thanks to \eqref{eq:identity_gstrichgL} and
  the fact that $g(0)=1$, \eqref{eq:gleq} turns into the inequality
  \[
 	\E(t)=\int_0^t L(\tau)d\tau = \ln(g(t)) 
	\leq \ln \left(\frac{2^\alpha(1-\alpha)^{\frac{\alpha}{1-\alpha}}}{c_1^{\frac1{1-\alpha}}}t^{\frac{\alpha}{1-\alpha}}
	+\frac{2^\alpha}{c_1}h^\alpha(t_0) \right)
	%\left(\frac{2^\alpha h^\alpha(t_0)}{c_1^\alpha} + 2^\alpha(\frac{1-\alpha}{c_1})^{\frac{\alpha}{1-\alpha}} 
	\qquad \mbox{ for all }t>t_0.
  \]
  Now since for 
  $t>\max\{t_0,\frac{c_1}{1-\alpha}h^{1-\alpha}(t_0)\}$	%\max\{t_0,\frac{c_1^\alpha}{1-\alpha}h^{1-\alpha}(t_0)\}$
  we have that $\frac{2^\alpha(1-\alpha)^{\frac{\alpha}{1-\alpha}}}{c_1^{\frac1{1-\alpha}}}
  t^{\frac{\alpha}{1-\alpha}}>\frac{2^\alpha}{c_1}h^\alpha(t_0)$, in view of the observation that
  \[
 	\frac{\alpha}{1-\alpha}=\frac{1}{\frac1\alpha-1}=\frac1{\frac{\gamma+2}{\gamma-n}-1}
	=\frac{\gamma-n}{\gamma+2-(\gamma-n)}=\frac{\gamma-n}{n+2},
  \]
  we obtain \eqref{eq:E_upperbd} with $C:=\ln(2^{\alpha+1}(1-\alpha)^{\frac{\alpha}{1-\alpha}}c_1^{-\frac1{1-\alpha}})$
  and $T:=\max\{t_0,\frac{c_1}{1-\alpha}h^{1-\alpha}(t_0)\}$.\abs
  ii) \  
  Given $\eps>0$, we define $p:=\frac{n}{\gamma-\eps}$ and observe that then \eqref{eq:u0leq} shows that 
  $u_0\in L^p(\R^n)$. 
  Now proceeding as in part i), we let $v$ be the solution of (\ref{0v}) from Lemma \ref{lem0} with
  $v_0:=u_0$, and first invoke Lemma \ref{lem:v_estimate} to obtain $c_2>0$ such that 
  \bas
 	1=\norm[L^1(\R^n)]{u(\cdot,t)}=g(t)\norm[L^1(\R^n)]{v(\cdot,h(t))}\leq c_2 g(t)h^{-\alpha}(t)
	=c_2 h'(t)h^{-\alpha}(t)\qquad \mbox{for all }t>0,
  \eas
  this time with $\alpha=\frac{1-p}{1+\frac{2p}n}$, 
  so that $g(t)\geq \frac1{c_2} h^\alpha(t)$ and $(h^{1-\alpha})'(t)\geq\frac{1-\alpha}{c_2}$ for all $t>0$. 
  Consequently, 
  \[
 	h^{1-\alpha}(t)=\int_0^t(h^{1-\alpha})'(\tau) d\tau \geq \frac{1-\alpha}{c_2}t, \qquad \mbox{for all }t>0
  \]
  and 
  \[
 	g(t)\geq\frac1{c_2} h^\alpha(t)\geq c_2^{-\frac1{1-\alpha}} (1-\alpha)^{\frac{\alpha}{1-\alpha}} 
	t^{\frac{\alpha}{1-\alpha}} \qquad \mbox{for all }t>0.
  \]
  Since 
  \[
 	\frac{\alpha}{1-\alpha} = \frac{1}{\frac1\alpha-1} = \frac{1}{\frac{1+\frac{2p}n}{1-p}-1} 
	= \frac{1-p}{\frac{2p}n+p} =\frac{1-p}{p}\cdot\frac{n}{2+n},
  \]
  from the identity $\E(t)=\ln(g(t))$, $t>0$, we obtain that
  \bas
	\E(t)\geq \frac{1-p}p\cdot \frac{n}{2+n} \ln(t) - c_3 \qquad \mbox{for any }t>0
  \eas
  with $c_3:=\frac1{1-\alpha}\left(\ln(c_2)- \alpha\ln(1-\alpha)\right)$.
  Since herein we have
  \bas
 	\frac{(1-p)n}{p}=\frac{(1-\frac{n}{\gamma-\eps})n}{\frac{n}{\gamma-\eps}}
	=(\gamma-\eps)\Big(1-\frac{n}{\gamma-\eps}\Big)
	=\gamma-n-\eps
  \eas
  according to our definition of $p$, this yields the claimed inequality.
\qed
\section{Weakly sublinear growth of $\E$ for rapidly decreasing initial data}\label{sect6}
\subsection{A lower estimate for $\E$ in terms of $\calL$}
We next intend to examine how far imposing faster decay conditions on the initial data in (\ref{0}) can enforce
asymptotics of $\E$ different from that observed before for algebraically decreasing data.
Our first step in this direction yields a lower estimate for $\E$ under a given hypothesis on the temporal decay
of the solution to (\ref{0v}), formulated in terms of a rather general function $\ell$ fulfilling
appropriately mild conditions. 
\begin{lem}\label{lem:rapid_Egeq_preparation} 
  Assume that $\ell\in C^0([0,\infty))$ satisfies \eqref{ell:zero_pos_nondec}, \eqref{ell:monLstrich} and \eqref{ell:intcond}. 
  Let $u_0\in C^0(\R^n)$ be a positive function satisfying (\ref{10.1})-(\ref{10.3}), and
  let $u$ and $v$ be as in Lemma \ref{lem9}.
  Suppose that there exist $C_0>0$ and $s_0>0$ such that 
  \begin{equation}\label{eq:decreasecond_v}
 	\|v(\cdot,s)\|_{L^1(\R^n)} \le C_0 s^{-1} \cdot \ell^{-\frac{n+2}{n}}\Big(\frac{1}{s}\Big)
	\qquad \mbox{for all } s>s_0.
  \end{equation}
  Then we can find $t_0>0$ and $C_1 >0$, $C_2\ge 0$ such that with $\calL$ as defined in \eqref{eq:definecalL} we have  
  \begin{equation}\label{eq:Egeq_rapid}
  	\E(t)  \geq \ln\Big((\calL^{-1})'(C_1  t)\Big) -C_2
	\qquad \mbox{for all } t>t_0.
  \end{equation}
\end{lem}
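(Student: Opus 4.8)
The plan is to reduce the statement to a lower bound on the single function $g$ and then read off everything from the identities collected in Lemma \ref{lem:reg_and_identities}. Recall from there that $g'=gL$ with $g(0)=1$, so that $\E(t)=\int_0^t L=\ln g(t)$, and that by \eqref{eq:identity_G} and \eqref{eq:Hstrich} we have $g(t)=G(h(t))=1/H'(h(t))=1/\norm[L^1(\R^n)]{v(\cdot,h(t))}$. Since $h=H^{-1}$ maps $[0,\infty)$ bijectively onto itself (Lemma \ref{lem:h}), we have $h(t)\to\infty$, so for all sufficiently large $t$ the hypothesis \eqref{eq:decreasecond_v} applies at $s=h(t)>s_0$ and yields
\[
	g(t)=\frac{1}{\norm[L^1(\R^n)]{v(\cdot,h(t))}}\ge\frac{1}{C_0}\,h(t)\,\ell^{\frac{n+2}{n}}\Big(\frac{1}{h(t)}\Big)=\frac{1}{C_0\,\calL'(h(t))},
\]
where in the last step I used that $\calL'(\xi)=\big(\xi\,\ell^{\frac{n+2}{n}}(1/\xi)\big)^{-1}$ according to \eqref{eq:definecalL}.

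Next I would convert the relation $h'(t)=g(t)$ from \eqref{eq:identity_hstrichg} into an autonomous differential inequality for the composite $\calL\circ h$. Indeed, the chain rule together with the preceding estimate gives
\[
	\frac{d}{dt}\,\calL(h(t))=\calL'(h(t))\,h'(t)=\calL'(h(t))\,g(t)\ge\frac{1}{C_0}\qquad\text{for all large }t,
\]
since $\calL'(h(t))>0$. Fixing $t_0$ so that $h(t_0)>\max\{s_0,\xi_0,1\}$ and integrating produces the linear lower bound $\calL(h(t))\ge\frac{1}{C_0}(t-t_0)+\calL(h(t_0))$; after enlarging $t_0$ so as to absorb the additive constant (replacing $1/C_0$ by $1/(2C_0)$ if that constant is negative), this reads $\calL(h(t))\ge C_1 t$ for $t>t_0$.

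Finally I would feed this back into the first display via the elementary identity $(\calL^{-1})'(\calL(\xi))=1/\calL'(\xi)$, which gives $g(t)\ge\frac{1}{C_0}(\calL^{-1})'(\calL(h(t)))$. The decisive structural input is now that $(\calL^{-1})'$ is nondecreasing on the relevant range: condition \eqref{ell:monLstrich} makes $\xi\mapsto\xi\,\ell^{\frac{n+2}{n}}(1/\xi)$ nondecreasing, hence $\calL'$ nonincreasing, hence $\calL$ concave and $\calL^{-1}$ convex. Monotonicity of $(\calL^{-1})'$ then allows me to lower the argument from $\calL(h(t))$ to the smaller quantity $C_1 t$, so that $g(t)\ge\frac{1}{C_0}(\calL^{-1})'(C_1 t)$ for $t>t_0$; taking logarithms yields \eqref{eq:Egeq_rapid} with $C_2:=\max\{0,\ln C_0\}$. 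I expect the main obstacle to be the bookkeeping at this last step — verifying that for large $t$ both $\calL(h(t))$ and $C_1 t$ lie in the range where $(\calL^{-1})'$ is genuinely nondecreasing (i.e.\ beyond $\calL(\xi_0)$), and checking that the several enlargements of $t_0$ needed to apply \eqref{eq:decreasecond_v}, to absorb the integration constant, and to enter this monotonicity regime are mutually consistent.
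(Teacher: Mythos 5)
Your proposal is correct and follows essentially the same route as the paper's proof: the identity $g(t)=1/\|v(\cdot,h(t))\|_{L^1(\R^n)}$ combined with \eqref{eq:decreasecond_v} to get $g\ge 1/(C_0\calL'(h))$, then $h'=g$ turning this into $(\calL\circ h)'\ge 1/C_0$, integration, and finally the monotonicity furnished by \eqref{ell:monLstrich} to pass to $(\calL^{-1})'(C_1 t)$. The only (harmless) difference is bookkeeping: you absorb the additive constant at the level of $\calL(h(t))\ge C_1 t$ and invoke monotonicity of $(\calL^{-1})'$ once, whereas the paper first bounds $h(t)\ge\calL^{-1}\big(\tfrac{t-t_1}{C_0}\big)$ and then applies the monotonicity twice.
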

\proof
  From \eqref{eq:decreasecond_v} we obtain that due to Theorem \ref{theo10} and \eqref{def_u},% and \eqref{eq:identity_hstrichg},
  \bas
  	1=g(t)\norm[L^1]{v(\cdot,h(t))}\leq C_0 g(t) \frac1{h(t)} \ell^{-\frac{n+2}n}\left(\frac1{h(t)}\right) 
	\qquad \mbox{for }t>t_1:=\max\set{H(s_0),H(1)},
  \eas
  which by \eqref{eq:definecalL} implies that
  \begin{equation}\label{eq:ggeq1dLstrich}
  	g(t)\geq \frac1{C_0}\cdot h(t) \ell^{\frac{n+2}n}\left(\frac1{h(t)}\right)=\frac{1}{C_0\calL'(h(t))} \qquad \mbox{for } t>t_1
  \end{equation}
  as well as, by \eqref{eq:identity_hstrichg}, 
  \bas
  	(\calL\circ h)'(t)\geq \frac{1}{C_0} \qquad \mbox{for } t>t_1.
  \eas
  Taking into account that our choice of $t_1$ ensures that $h(t_1)\geq 1$ and that thus $\calL(h(t_1))\geq\calL(1)=0$, 
  we have
  \bas
  	h(t)\geq \calL^{-1}\left(\calL(h(t_1))+\frac{t-t_1}{C_0} \right)
	\geq \calL^{-1}\left(\frac{t-t_1}{C_0} \right) 
	\qquad \mbox{for all }t>t_1.
  \eas
  According to \eqref{ell:monLstrich}, $\frac{1}{\calL'(\cdot)}$ is monotone on $(\xi_0,\infty)$ and hence 
  \eqref{eq:ggeq1dLstrich} implies that
  \bas
  	\E(t) = \ln(g(t)) 
	\ge \ln\left(\frac{1}{C_0\calL'\left(\calL^{-1}\left(c_1(t-t_1) \right)\right)}\right)
	=\ln\Big((\calL^{-1})'(c_1 t-c_2)\Big)-\ln C_0 
	\qquad \mbox{for all } t>t_2,
  \eas
  where $c_1:=\frac1{C_0}$, $c_2:=c_1t_1$ and $t_2:=\max\set{t_1,\frac1{c_1}(\calL(\xi_0)+c_2)}$.
  Once more employing monotonicity of $(\calL^{-1})'$,
  we obtain \eqref{eq:Egeq_rapid} with $C_1 :=\frac{c_1}2$, $C_2=\max\set{0,\ln C_0}$ and $t_0:=\max\set{\frac{2c_2}{c_1},2c_0\calL(\xi_0),t_2}$.
\qed
We proceed to ensure \eqref{eq:decreasecond_v} under appropriate conditions, relying on two
statements on decay of solutions to (\ref{0v}) in $L^q(\R^n)$ for small $q>0$ and in $L^\infty(\R^n)$, respectively,
as derived in \cite{fast_growth2}.
\begin{lem}\label{lem:rapiddecr_l1}
  Suppose that $\ell\in C^0([0,\infty))$ satisfies \eqref{ell:zero_pos_nondec} and 
  is such that with some $\xi_2>0, a>0$ and $\lambda_0>0$ we have $\ell \in C^2((0,\xi_2))$ and that \eqref{ell3} and \eqref{ell33} are satisfied. 
%   \be{l1_ell3}
% 	\xi\ell''(\xi) \ge - \ell'(\xi)
% 	\qquad \mbox{for all } \xi\in (0,\xi_2)
%   \ee
%   as well as
%   \be{l1_ell33}
% 	\ell(\xi)\le (1+a\lambda) \ell(\xi^{1+\lambda}) 
% 	\qquad \mbox{ for all } \xi\in(0,\xi_2) \mbox{ and each } \lambda\in(0,\lambda_0).
%   \ee
%   \red{Diese beiden Bedingungen stimmen mit \eqref{ell3} bzw. \eqref{ell33} überein. Sollen wir sie hier wiederholen oder wäre eine Auflistung 'satisfies \eqref{ell:zero_pos_nondec}, \eqref{ell3}, \eqref{ell33}. Moreover, ' ausreichend?}
  Moreover, assume that $v_0\in C^0(\R^n)$ is positive, radially symmetric and
  nonincreasing with respect to $|x|$. 
  Then if there exists $q_0>0$ such that
  \be{lq1}
 	v_0<\min\set{\xi_2^2,\xi_2^{\frac2{1+q_0}}} \qquad \mbox{in }\R^n,
  \ee
  and if furthermore
  \be{lq2}
 	\ir \ell(v_0) <\infty,
  \ee
  one can find $s_0>0$ and $C>0$ such that the minimal solution $v$ of \eqref{0v} satisfies 
  \bas
	\|v(\cdot,s)\|_{L^1(\R^n)} \le C s^{-1} \ell^{-\frac{n+2}{n}}\Big(\frac{1}{s}\Big)
	\qquad \mbox{for all } s> s_0.
  \eas
\end{lem}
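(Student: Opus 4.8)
The plan is to derive the asserted $L^1$ decay by interpolating a decay estimate in $L^\infty(\R^n)$ against one in $L^q(\R^n)$ for a conveniently small $q>0$, and then to reorganize the resulting bound into the precise form $s^{-1}\ell^{-\frac{n+2}{n}}(\frac1s)$ by exploiting the structural conditions imposed on $\ell$. First I would note that, since $v_0$ is assumed radially symmetric and nonincreasing in $|x|$, the comment following Lemma \ref{lem01} ensures that the minimal solution $v(\cdot,s)$ is radial and nonincreasing in $|x|$ for every $s>0$; this is the setting in which the decay statements of \cite{fast_growth2} are formulated. The smallness requirement (\ref{lq1}) is used to keep $v_0$ together with the relevant powers (such as $v_0^{2}$ and $v_0^{1+q_0}$) inside the interval $(0,\xi_2)$ on which $\ell\in C^2$ and on which the convexity-type condition (\ref{ell3}) and the doubling condition (\ref{ell33}) hold, so that these cited estimates become applicable with an admissible choice $q\in(0,q_0)$; the finiteness of $\ir \ell(v_0)$ from (\ref{lq2}) serves as the controlling quantity in both of them.

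Next I would invoke the two decay statements: an $L^\infty$ bound $\|v(\cdot,s)\|_{L^\infty(\R^n)}\le \Phi(s)$ and an $L^q$ bound $\|v(\cdot,s)\|_{L^q(\R^n)}^q\le \Psi_q(s)$, both valid for $s>s_0$ with suitable $s_0>0$. The interpolation then reads
\[
  \|v(\cdot,s)\|_{L^1(\R^n)}=\ir v^q\,v^{1-q}\le \|v(\cdot,s)\|_{L^\infty(\R^n)}^{1-q}\,\|v(\cdot,s)\|_{L^q(\R^n)}^q\le \Phi(s)^{1-q}\Psi_q(s),
\]
after which the whole task is to simplify the product $\Phi^{1-q}\Psi_q$. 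Since the left-hand side is independent of $q$, the $q$-dependence of the exponents must cancel, leaving exactly the rate $s^{-1}\ell^{-\frac{n+2}{n}}(\frac1s)$. As a sanity check, in the model case $\ell(\xi)=\xi^\alpha$ with $\alpha\in(0,\frac{n}{n+2})$ the self-similar scaling of $v_s=v\Delta v$ gives $\Phi(s)\sim s^{-1}\ell^{-\frac2n}(\frac1s)$, and a direct computation shows that $\Phi^{1-q}\Psi_q\sim s^{-1}\ell^{-\frac{n+2}{n}}(\frac1s)$ with the powers of $q$ dropping out; the purpose of (\ref{ell33}) and the monotonicity built into (\ref{ell:monLstrich}) is to license the same bookkeeping for a general admissible $\ell$ by absorbing the multiplicative constants and powers that are generated inside the arguments of $\ell$.

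The step I expect to be the main obstacle is precisely this last reorganization for general $\ell$: one must track the constants produced when the two rates are multiplied and show, using the doubling property (\ref{ell33}) together with the convexity condition (\ref{ell3}), that $\Phi(s)^{1-q}\Psi_q(s)$ is dominated by a fixed constant multiple of $s^{-1}\ell^{-\frac{n+2}{n}}(\frac1s)$ uniformly for $s>s_0$, rather than only for pure powers where the algebra is transparent. A secondary technical point is to fix the admissible range of $q$ and the threshold $s_0$ so that both cited estimates hold simultaneously, which is exactly where the parameter $q_0$ from (\ref{lq1}) and the constants $a,\lambda_0$ from (\ref{ell33}) enter.
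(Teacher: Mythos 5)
Your proposal is correct and follows essentially the same route as the paper: the paper likewise interpolates an $L^q$ decay estimate (obtained from \cite[Lemma 3.6]{fast_growth2} together with Fatou's lemma, with $q\in(0,\min\{q_0,1\})$) against an $L^\infty$ decay estimate (from \cite[Theorem 1.3]{fast_growth2}, which is where the radial monotone structure of $v_0$ is needed), writing $\ir v = \ir v^{1-q}v^q \le \|v\|_{L^\infty(\R^n)}^{1-q}\|v\|_{L^q(\R^n)}^q$. The one point worth noting is that the ``main obstacle'' you anticipate does not arise: the cited rates are $c_1 s^{-1}\ell^{-\frac{n+2q}{nq}}\big(\frac1s\big)$ in $L^q$ and $c_2 s^{-1}\ell^{-\frac{2}{n}}\big(\frac1s\big)$ in $L^\infty$, so the interpolated product equals $c_1^q c_2^{1-q}\, s^{-1}\,\ell^{-\frac{2(1-q)}{n}-\frac{n+2q}{n}}\big(\frac1s\big)=c_1^q c_2^{1-q}\, s^{-1}\,\ell^{-\frac{n+2}{n}}\big(\frac1s\big)$ exactly, for arbitrary $\ell$ and with the $q$-dependence cancelling identically; the structural conditions \eqref{ell3} and \eqref{ell33} are consumed entirely by the hypotheses of the cited results and play no role in the combination step.
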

\proof
  From \cite[Lemma 3.6]{fast_growth2} and Fatou's lemma 
  we infer that (\ref{lq1}) and (\ref{lq2}) ensure the existence of some $q\in (0,\min\{q_0,1\})$ 
  as well as positive constants $c_1$ and $s_1$ such that
  \be{l1.1}
 	\|v(\cdot,s)\|_{L^q(\R^n)} \le c_1 s^{-1} \ell^{-\frac{n+2q}{nq}}\Big(\frac{1}{s}\Big)
	\qquad \mbox{for all } s\geq s_1.
  \ee
  Moreover, the additional assumption on radial symmetry and monotonicity of $v_0$ enables us to invoke
  \cite[Theorem 1.3]{fast_growth2} which yields $c_2>0$ and $s_2>0$ fulfilling
  \be{l1.2}
 	\|v(\cdot,s)\|_{L^\infty(\R^n)} \le 
	c_2 s^{-1} \ell^{-\frac{2}{n}}\Big(\frac{1}{s}\Big)
	\qquad \mbox{for all } s\ge s_2.
  \ee
  Combining (\ref{l1.1}) with (\ref{l1.2}) we thus obtain
  \bas
 	\ir v(\cdot,s) 
	&=& \ir v^{1-q}(\cdot,s)v^q(\cdot,s) \\
	&\leq& \|v(\cdot,s)\|_{L^\infty(\R^n)}^{1-q} \|v(\cdot,s)\|_{L^q(\R^n)}^q \\
 	&\leq& c_1^{1-q} s^{-(1-q)} \ell^{-\frac{2}{n}(1-q)}\Big(\frac{1}{s}\Big)
	\cdot c_2^q s^{-q} \ell^{-\frac{n+2q}{nq}q} \Big(\frac{1}{s}\Big) \\
 	&=& c_1^{1-q} c_2^q s^{-1} \ell^{-\frac{n+2}n} \Big(\frac{1}{s}\Big)
  \eas
  for all $s>\max\{s_1,s_2\}$.
\qed
\subsection{An upper estimate for $\E$ in terms of $\calL$}
The following conditional information on $\E$ can be viewed as a counterpart of Lemma \ref{lem:rapid_Egeq_preparation}.
\begin{lem}\label{lem:rapid_Eleq_preparation}
  Let $\ell \in C^0([0,\infty))$ satisfy \eqref{ell:zero_pos_nondec}, \eqref{ell:monLstrich} and \eqref{ell:intcond}, 
  let $u_0\in C^0(\R^n)$ be a positive function satisfying (\ref{10.1})-(\ref{10.3}), and let $u$ and $v$
  be as introduced in Lemma \ref{lem9}. Then if
  there exist $C_0>0$ and $t_0>0$ such that 
  \begin{equation}\label{eq:cond_vgeq}
 	\norm[L^1(\R^n)]{v(\cdot,s)} \geq C_0 s^{-1} \ell^{-\frac{n+2}{n}} \Big(\frac{1}{s}\Big)
	\qquad \mbox{for all } s> s_0.
  \end{equation}
  we can find $t_1>0$ and $C_1 >0$, $C_2\ge 0$ with the property that for the function $\E$ from (\ref{def_E}) we have
  \begin{equation}\label{eq:Eleq_rapid}
  	\E(t)  \leq \ln\Big((\calL^{-1})'(C_1  t)\Big) +C_2
	\qquad \mbox{for any }t>t_0.
  \end{equation}
\end{lem}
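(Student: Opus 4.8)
The plan is to mirror the proof of Lemma~\ref{lem:rapid_Egeq_preparation}, simply reversing every inequality, since the hypothesis \eqref{eq:cond_vgeq} is the opposite of \eqref{eq:decreasecond_v}. First I would combine Theorem~\ref{theo10} with \eqref{def_u} to write $1=g(t)\norm[L^1]{v(\cdot,h(t))}$, and then feed in the lower bound \eqref{eq:cond_vgeq} (valid once $h(t)>\max\{s_0,1\}$, i.e.\ for $t>t_1:=\max\{H(s_0),H(1)\}$) to obtain
\bas
	1 = g(t)\,\norm[L^1]{v(\cdot,h(t))}
	\geq C_0\, g(t)\,\frac{1}{h(t)}\,\ell^{-\frac{n+2}{n}}\Big(\frac{1}{h(t)}\Big)
	\qquad \mbox{for all } t>t_1.
\eas
In view of \eqref{eq:definecalL} one has $\calL'(h(t))=\frac{1}{h(t)}\ell^{-\frac{n+2}{n}}\big(\frac1{h(t)}\big)$, so this rearranges to the upper bound $g(t)\leq \frac{1}{C_0\,\calL'(h(t))}$, the exact reverse of \eqref{eq:ggeq1dLstrich}.

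Next I would use \eqref{eq:identity_hstrichg}, namely $h'(t)=g(t)$, to turn the pointwise bound on $g$ into a differential inequality for $\calL\circ h$. Since $(\calL\circ h)'(t)=\calL'(h(t))\,h'(t)=\calL'(h(t))\,g(t)\leq \frac{1}{C_0}$, integrating from $t_1$ to $t$ gives an \emph{upper} bound on $h(t)$:
\bas
	\calL(h(t)) \leq \calL(h(t_1)) + \frac{t-t_1}{C_0},
	\qquad \mbox{hence} \qquad
	h(t) \leq \calL^{-1}\!\Big(\calL(h(t_1))+\frac{t-t_1}{C_0}\Big)
	\qquad \mbox{for } t>t_1,
\eas
using that $\calL^{-1}$ is increasing. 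Feeding this back into $g(t)\leq \frac{1}{C_0\calL'(h(t))}$ and recalling $\E(t)=\ln g(t)$ then yields, via the monotonicity of $\frac{1}{\calL'(\cdot)}$ on $(\xi_0,\infty)$ guaranteed by \eqref{ell:monLstrich}, an estimate of the form $\E(t)\leq \ln\big((\calL^{-1})'(c_1 t + c_2)\big)+\mathrm{const}$; a final absorption of the affine shift $c_1t+c_2$ into a single constant $C_1 t$ (again exploiting monotonicity of $(\calL^{-1})'$, as in the previous lemma) produces exactly \eqref{eq:Eleq_rapid}.

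The one genuinely delicate point is bookkeeping the direction of every monotonicity so that each reversal is legitimate: here $h$ is bounded \emph{above} rather than below, so I must be careful that $\calL^{-1}$ and $(\calL^{-1})'$ are applied where they are monotone in the required sense, and that the threshold $t_1$ is large enough to place $h(t)$ in the region $(\xi_0,\infty)$ where \eqref{ell:monLstrich} applies and where $\calL(h(t))\geq 0$. The rest is a routine transcription of the calculation in Lemma~\ref{lem:rapid_Egeq_preparation} with $\geq$ and $\leq$ interchanged. (I would also note that the statement as written has $t>t_0$ in \eqref{eq:Eleq_rapid} whereas the natural threshold is the $t_1$ constructed above; presumably $t_1$ is the intended quantity, and I would define it accordingly.)
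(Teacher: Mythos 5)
Your proposal is correct and follows essentially the same route as the paper's own proof: both derive $g(t)\leq \frac{1}{C_0\calL'(h(t))}$ and $(\calL\circ h)'(t)\leq \frac{1}{C_0}$ from \eqref{eq:cond_vgeq}, \eqref{def_u} and \eqref{eq:identity_hstrichg}, integrate to get $\calL(h(t))\leq c_1 t$ for large $t$, and then invoke the monotonicity of $\frac{1}{\calL'(\cdot)}$ on $(\xi_0,\infty)$ from \eqref{ell:monLstrich} (with the threshold $H(\xi_0)$) to conclude $\E(t)=\ln g(t)\leq \ln\big((\calL^{-1})'(C_1 t)\big)+C_2$. Your remark about the statement's $t_0$/$t_1$/$s_0$ labeling is also on target; the paper's proof likewise works with thresholds $t_1:=H(s_0)$ and $t_2:=\max\{t_1,C_0\calL(h(t_1))\}$ rather than the $t_0$ appearing in the statement.
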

\proof
  Proceeding similarly to the proof of Lemma \ref{lem:rapid_Egeq_preparation}, we use \eqref{eq:cond_vgeq}, 
  Theorem \ref{theo10}, \eqref{def_u}, \eqref{eq:definecalL} and \eqref{eq:identity_hstrichg} to  
  obtain %$c_1>0$ and $t_1>1$ satisfying 
  \bas
 	g(t)\leq \frac{1}{C_0\calL'(h(t))} 
	\quad \mbox{and}\quad 
	(\calL\circ h)'(t)\leq\frac1{C_0}
	\qquad \mbox{ for } t>t_1:=H(s_0), 
  \eas
  so that 
  \bas
 	\calL(h(t))\leq \calL(h(t_1))+\frac{t-t_1}{C_0}\leq c_1 t
	\qquad \mbox{for all } t>t_2:=\max\set{t_1,C_0\calL(h(t_1))}
  \eas
  with $c_1:=\frac{2}{C_0}$, whence finally
  \bas
 	\E(t)\leq\ln\left((\calL^{-1})'(c_1 t)\right), \qquad \mbox{whenever }t>\max\set{t_2,H(\xi _0)},%\frac1{c_2}\calL(\xi_0)},
  \eas
  as desired, with $\xi_0$ as in \eqref{ell:monLstrich}.
\qed
In order to identify conditions ensuring \eqref{eq:cond_vgeq}, we make use of a comparison argument inspired
by that in \cite[Theorem 1.6]{fast_growth2} to establish a pointwise lower estimate %from below
 for positive solutions of (\ref{0v}).
\begin{lem}\label{lem:rapid_lowerest}
  Let $\ell \in C^0([0,\infty)$ be such that \eqref{ell:zero_pos_nondec} holds, that $\ell$ is strictly increasing on $(0,\xi_1)$ 
  for some $\xi_1\in(0,\infty]$, and that
  \be{rle_ell:0limit}
 	\frac{\xi \ell'(\xi)}{\ell(\xi)} \to 0
	\qquad \mbox{as } \xi\searrow 0.
  \ee
  Moreover, assume that there exist $q\in (0,1)$ and  
  $\Rst>\max\set{\frac1{\sqrt[n]{\xi_1}},\frac1{\sqrt[n]{\lim_{\xi\nearrow \xi_1} \ell(\xi)}}}$ %schreibweise mit lim für den Fall \xi _1=\infty
  such that the positive function $v_0\in C^0(\R^n)\cap L^\infty(\R^n)$ satisfies 
  \be{le1}
	v_0(x)\geq \bigg\{ \ell^{-1}\left(\frac{1}{|x|^n}\right)\bigg\}^q
	\qquad \mbox{ for all }x\in\R^n\setminus B_{\Rst}.
  \ee
%  for $q\in(0,1)$ as in \eqref{ell:0limit}. 
  Then with some $s_0>0$ and $C>0$, the corresponding minimal solution 
  $v$ of \eqref{0v} fulfills
  \be{le2}
  	\|v(\cdot,s)\|_{L^1(\R^n)}
	\ge C s^{-1} \cdot \ell^{-\frac{n+2}n} \Big(\frac{1}{s}\Big)
	\qquad \mbox{for all } s>s_0.
  \ee
\end{lem}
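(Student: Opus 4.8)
The plan is to construct a radial subsolution of \eqref{0v} from below, using the explicit pointwise assumption \eqref{le1} on the initial data, and then to integrate the resulting lower bound over $\R^n$ to produce \eqref{le2}. Since Lemma \ref{lem0} provides the minimal solution as the supremum of a monotone family, any classical subsolution that lies below $v_0$ initially will lie below $v$ at later times by the comparison principle from \cite{wiegner}; hence it suffices to exhibit a single explicit lower comparison function with the right spatial decay and then track its mass. The natural candidate, in the spirit of \cite[Theorem 1.6]{fast_growth2}, is a self-similar profile: I would look for $\uv(x,s)$ of the form $\uv(x,s) = \{\ell^{-1}(s^{-1} \psi(|x| s^{\kappa}))\}^q$ or, more robustly, a separated-variables ansatz $\uv(x,s)=\eta(s)\cdot w(|x|)$ adapted to the degenerate structure, chosen so that $\uv_s \le \uv\Delta\uv$ holds in the region where $\uv$ is small.

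\textbf{Key steps.} First I would fix the self-similar scaling: the equation $v_s=v\Delta v$ is invariant under $v(x,s)\mapsto \mu v(\lambda x, \mu\lambda^2 s)$, so I expect the relevant length scale at time $s$ to be $|x|\sim s^{1/n}$ in the regime dictated by \eqref{le1} (matching $\ell^{-1}(|x|^{-n})$ against $\ell^{-1}(s^{-1})$). Second, I would verify the subsolution inequality $\uv_s-\uv\Delta\uv\le 0$ in the bulk region $|x|\gtrsim \Rst s^{1/n}$; this is where conditions \eqref{rle_ell:0limit} and the strict monotonicity of $\ell$ on $(0,\xi_1)$ enter, since the differential inequality, after substituting the ansatz and dividing through, reduces to a condition on $\xi\ell'(\xi)/\ell(\xi)$ that \eqref{rle_ell:0limit} forces to hold for small argument. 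Third, I would check the comparison at the parabolic boundary: that $\uv(\cdot,s_0)\le v(\cdot,s_0)$ on a suitable sphere $|x|=\Rst s_0^{1/n}$ (using \eqref{le1} together with the lower bound on $v$ from, e.g., Lemma \ref{FWProp1_3} on compact sets) and that $\uv\le v_0$ initially where needed. Fourth, with $\uv\le v$ secured on $\R^n\times(s_0,\infty)$, I would integrate: $\|v(\cdot,s)\|_{L^1(\R^n)}\ge \int \uv(\cdot,s) \gtrsim \int_{|x|\gtrsim s^{1/n}} \{\ell^{-1}(\cdots)\}^q$, and a change of variables $r=|x|$ together with the asymptotics of $\ell^{-1}$ should yield exactly the factor $s^{-1}\ell^{-\frac{n+2}{n}}(1/s)$ claimed in \eqref{le2}.

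\textbf{Main obstacle.} The delicate point is the construction and verification of the subsolution itself. Because the diffusion degenerates where $v$ is small, the inequality $\uv_s\le\uv\Delta\uv$ is most demanding precisely in the far-field region where $\uv$ is tiny, so the factor $\uv$ multiplying $\Delta\uv$ nearly vanishes and the sign of $\uv_s-\uv\Delta\uv$ becomes sensitive to the precise profile. I expect that the exponent $q\in(0,1)$ in \eqref{le1} is introduced exactly to create enough of a margin here: raising to the power $q$ weakens the decay just enough that the curvature term $\uv\Delta\uv$ can dominate the time derivative $\uv_s$. Getting the algebra of $\Delta\{\ell^{-1}(|x|^{-n})\}^q$ to close under the structural hypotheses \eqref{rle_ell:0limit} will be the technical heart of the argument, and the lower bound on $\Rst$ in terms of $\xi_1$ and $\lim_{\xi\nearrow\xi_1}\ell(\xi)$ is there to keep the relevant arguments of $\ell^{-1}$ inside the interval of strict monotonicity where these manipulations are valid. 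Once the subsolution is in hand, the final integration is a routine computation with the regularly varying function $\ell^{-1}$.
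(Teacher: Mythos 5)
Your high-level strategy --- bound $v$ from below by a comparison function and then integrate its mass --- is indeed the paper's strategy, but the proposal has two genuine gaps. First, the step you yourself defer as ``the technical heart'' (verifying $\uv_s\le\uv\Delta\uv$ for a far-field profile of the form $\{\ell^{-1}(|x|^{-n})\}^q$) is never carried out, and it is precisely the step the paper is designed to avoid. Second, and more fundamentally, your identification of the relevant spatial scale is wrong, so even granting the subsolution, the final integration would not give \eqref{le2}. The mass in \eqref{le2} is not carried by the far-field tail at $|x|\sim s^{1/n}$: take $\ell(\xi)=\xi^\alpha$ with $\alpha\in(0,\tfrac{n}{n+2})$; then $\{\ell^{-1}(|x|^{-n})\}^q=|x|^{-nq/\alpha}$, and $\int_{|x|\ge s^{1/n}}|x|^{-nq/\alpha}\,dx$ is a multiple of $s^{1-\frac{q}{\alpha}}$ (when finite), which agrees with the claimed rate $s^{-1}\ell^{-\frac{n+2}{n}}\big(\tfrac1s\big)=s^{-1+\alpha\frac{n+2}{n}}$ only for the single value $q=2\alpha-\alpha^2\tfrac{n+2}{n}$ and is strictly weaker whenever $q$ exceeds that value --- whereas \eqref{le2} must hold for every fixed $q\in(0,1)$. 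The correct scale is $R\sim\ell^{-1/n}(1/s)$, namely the radius of the ball on which \eqref{le1} guarantees $v_0\gtrsim(1+s)^{-q}$, and the quantity in \eqref{le2} is the mass of a cap of width $R$ and height $\sim s^{-1}R^2$ sitting on that ball, not a tail integral of the initial profile.

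The paper obtains this cap without ever touching the degenerate far-field regime, by two devices absent from your sketch. It first rescales, $z(x,\sigma):=(s+1)v(x,s)$ with $\sigma=\ln(s+1)$, so that \eqref{0v} becomes $z_\sigma=z\Delta z+z$. Then, for each fixed target time $\sigma_0$, it compares $z$ on the single ball $B_{R(\sigma_0)}$, $R(\sigma_0):=\{\ell(e^{-\sigma_0})\}^{-1/n}$, with $\uz(x,\sigma):=y(\sigma)\varphi_{R(\sigma_0)}(x)$, where $\varphi_R$ is the torsion function from Lemma \ref{lem:w} and $y$ solves the logistic ODE $y'=y-y^2$ with $y(0)=2ne^{-q\sigma_0}/R^2(\sigma_0)$. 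Because $\Delta\varphi_{R(\sigma_0)}\equiv-1$, this product is an \emph{exact solution} of the transformed equation --- no subsolution inequality in a region of degenerate diffusion ever needs to be checked --- and it vanishes on $\partial B_{R(\sigma_0)}$, so the comparison (via \cite{wiegner}) requires only the initial inequality, which follows from \eqref{le1} together with positivity of $v_0$ on $B_{\Rst}$; your proposed appeal to Lemma \ref{FWProp1_3} is unnecessary. The hypotheses $q<1$ and \eqref{rle_ell:0limit} enter not as a ``curvature margin'' but through L'Hospital's rule: they yield $(q-1)\sigma+2\ln R(\sigma)\le c_1$, which keeps $y(\sigma_0)$ bounded below by a positive constant, whence $\|z(\cdot,\sigma_0)\|_{L^1(\R^n)}\ge y(\sigma_0)\,\|\varphi_{R(\sigma_0)}\|_{L^1(B_{R(\sigma_0)})}\ge c\,R^{n+2}(\sigma_0)$; undoing the rescaling gives exactly \eqref{le2}. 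To repair your argument you would have to replace the global self-similar ansatz by such a time-indexed family of compactly supported comparison solutions, or else actually carry out the far-field verification that your sketch leaves entirely open.
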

\proof
  We first observe that 
  \bas
	z(x,\sigma):=(s+1) v(x,s),
	\qquad x\in\R^n, \ \sigma=\ln (s+1)\ge 0,
  \eas
  defines a positive classical solution of
  \bas
	\left\{ \begin{array}{l}
	z_\sigma=z\Delta z + z,
	\qquad x\in\R^n, \ \sigma>0, \\[1mm]
	z(x,0)=v_0(x),
	\qquad x\in\R^n,
	\end{array} \right.
  \eas
  and in order to estimate $z$ from below appropriately, given $\sigma>0$ we set
  \begin{equation}\label{def:Rsigma}
	R(\sigma):=\Big\{ \ell(e^{-\sigma})\Big\}^{-\frac{1}{n}},
  \end{equation}
  and then obtain from L'Hospital's rule that \eqref{rle_ell:0limit} entails that
  \newcommand{\limtau}{\lim_{\sigma\to\infty}}
  \bas
  	\limtau \frac{2}{\sigma} \ln R(\sigma) = \limtau \frac{-2}{n\sigma} \ln(\ell(e^{-\sigma})) 
  	=\limtau \frac2n \frac{\ell'(e^{-\sigma})e^{-\sigma}}{\ell(e^{-\sigma})}
  	=\lim_{\xi \searrow0} \frac{\xi \ell'(\xi )}{\ell(\xi )}=0.
  \eas
  Hence,
  \bas
  	\limtau \Big((q-1)\sigma + 2\ln R(\sigma)\Big)=
	\limtau\Big(-(1-q)+\frac2\sigma \ln R(\sigma)\Big)\sigma=-\infty,
  \eas
  so that we can find $c_1>0$ with the property that
  \be{le5}
 	(q-1)\sigma+2\ln R(\sigma)\le c_1
	\qquad \mbox{for all } \sigma>0.
  \ee
  With $R_\star$ taken from (\ref{le1}), we now abbreviate
  \be{le3}
	\sigma_\star:=\frac{1}{q} \ln \frac{1}{\inf_{x\in B_{R_\star}} v_0(x)},
  \ee
  and for $\sigma_0>\sigma_\star$ we let
  \begin{equation}\label{def:deltasigma0}
	\delta(\sigma_0):=\frac{2n e^{-q\sigma_0}}{R^2(\sigma_0)}
  \end{equation}
  and
  \bas
	y(\sigma):=\Big( \frac{e^{-\sigma}}{\delta(\sigma_0)} + 1 - e^{-\sigma}\Big)^{-1},
	\qquad \sigma\ge 0,
  \eas
  that is, we let $y$ be the solution of
  \bas
	\left\{ \begin{array}{l}
	y'(\sigma)=y(\sigma)-y^2(\sigma),
	\qquad \sigma>0, \\[1mm]
	y(0)=\delta(\sigma_0).
	\end{array} \right.
  \eas
  Thus, if we define
  \bas
	\uz(x,\sigma):=y(\sigma) \varphi_{R(\sigma_0)}(x),
	\qquad x\in B_{R(\sigma_0)}, \ \sigma\ge 0,
  \eas
  with $\varphi_{R(\sigma_0)}$ as in Lemma \ref{lem:w}, then $\uz$ solves
  \bas
	\uz_\sigma- \uz\Delta \uz - \uz
	&=& y' \varphi_{R(\sigma_0)} - y^2 \varphi_{R(\sigma_0)} \Delta \varphi_{R(\sigma_0)} - y\varphi_{R(\sigma_0)} \\
	&=& (y-y^2) \varphi_{R(\sigma_0)} + y^2 \varphi_{R(\sigma_0)} - y\varphi_{R(\sigma_0)} \\[2mm]
	&=& 0,
	\qquad x\in B_{R(\sigma_0)}, \ \sigma>0,
  \eas
  and evidently $\uz(x,\sigma)=0\le z(x,\sigma)$ for all $x\in \partial B_{R(\sigma_0)}$ and $\sigma>0$.
  Moreover, using the montonicity of $\ell^{-1}$ in $[0,\frac1{\Rst^n})$ we may employ (\ref{le1}) to see that for all
  $x\in B_{R(\sigma_0)} \setminus B_{R_\star}$ %Die Menge ist nur dann nicht leer, wenn R(\sigma_0)>\Rst, weshalb wir das oBdA annehmen dürfen
  we can estimate
  \bas
	u_0(x)
	\ge \bigg\{ \ell^{-1} \Big(\frac{1}{R^n(\sigma_0)}\Big) \bigg\}^q
	= \bigg\{ \ell^{-1} \Big( \ell (e^{-\sigma_0})\Big) \bigg\}^q
	= e^{-q\sigma_0},
  \eas
  so that since
  \be{le4}
	\varphi_{R(\sigma_0)}(x) \le \frac{R^2(\sigma_0)}{2n}
	\qquad \mbox{in } B_{R(\sigma_0)},
  \ee
  according to \eqref{def:deltasigma0} for any such $x$ we have
  \bas
	\frac{z(x,0)}{\uz(x,0)}
	= \frac{v_0(x)}{\delta(\sigma_0) \varphi_{R(\sigma_0)}(x)}
	\ge \frac{e^{-q\sigma_0}}{\delta(\sigma_0) \cdot \frac{R^2(\sigma_0)}{2n}}
	= 1.
  \eas
  In the corresponding inner region, again by (\ref{le4}) we infer that 
  \bas
 	\frac{z(x,0)}{\uz(x,0)}
	&\ge& \frac{\inf_{x\in B_{R_\star}} v_0(x)}{\delta(\sigma_0)\cdot \frac{R^2(\sigma_0)}{2n}}
	= e^{q\sigma_0} \cdot \inf_{x\in B_{R_\star}} v_0(x)
	\ge e^{q\sigma_\star} \cdot \inf_{x\in B_{R_\star}} v_0(x)
	=1
	\qquad \mbox{for all } x\in B_{R_\star}
  \eas
  due to (\ref{le3}) and our restriction $\sigma_0>\sigma_\star$.
  A comparison argument (\cite{wiegner}) therefore shows that $z\geq \uz$ in $B_{R(\sigma_0)} \times (0,\infty)$ 
  and that thus, in particular,
  \bas
	\|z(\cdot,\sigma_0)\|_{L^1(\R^n)}
	&\ge& \|\uz(\cdot,\sigma_0)\|_{L^1(B_{R(\sigma_0)})} \\
	&=& y(\sigma_0) \|\varphi_{R(\sigma_0)}\|_{L^1(B_{R(\sigma_0)})} \\
	&=& \Big(\frac{e^{-\sigma_0}}{\delta(\sigma_0)} + 1 - e^{-\sigma_0}\Big)^{-1} \cdot
	\frac{2\omega_n}{n^2(n+2)} R^{2+n}(\sigma_0)
  \eas
  according to Lemma \ref{lem:w}.
  As our choice of $\delta(\sigma_0)$ together with (\ref{le5}) ensures that herein
  \bas
	\frac{e^{-\sigma_0}}{\delta(\sigma_0)} + 1 - e^{-\sigma_0}
	&\le& \frac{e^{-\sigma_0}}{\delta(\sigma_0)} + 1 \\
	&=& \frac{1}{2n} e^{-(1-q)\sigma_0 + 2\ln R(\sigma_0)} +1 \\
	&\le& \frac{1}{2n} e^{c_1}+1
	\qquad \mbox{for all } \sigma_0>\sigma_\star,
  \eas
  writing $c_2:=(\frac1ne^{2c_1}+1)^{-1}\frac{2\omega_n}{n^2(n+2)}$ we obtain 
  \bas
	\|z(\cdot,\sigma_0)\|_{L^1(\R^n)}
	\ge c_2 R^{2+n}(\sigma_0)
	\qquad \mbox{for all } \sigma_0>\sigma_\star.
  \eas
  For arbitrary $s>e^{\sigma_\star}-1$, choosing $\sigma_0:=\ln (s+1)$ we see that thanks to the monotonicity of
  $\ell$ and \eqref{def:Rsigma} this implies that
  \bas
	\|v(\cdot,s)\|_{L^1(\R^n)}
	&\ge& (s+1)^{-1}\|z(\cdot,\ln(s+1))\|_{L^1(\R^n)} \\
	&\ge& c_2(s+1)^{-1} \cdot\ell^{-\frac{n+2}{n}} \Big( e^{-\ln (s+1)}\Big) \\
	&\ge& c_2(s+1)^{-1} \cdot \ell^{-\frac{n+2}{n}}\Big(\frac{1}{s+1}\Big) \\
	&\ge& c_2(s+1)^{-1} \cdot \ell^{-\frac{n+2}{n}}\Big(\frac{1}{s}\Big) 
  \eas
  and thereby readily yields (\ref{le2}).
\qed
\subsection{Proof of Theorem \ref{theo300}}
We now only need to combine the results of the previous two sections to obtain our main result on asymptotic
behavior of $\E$ for rapidly decreasing initial data.\abs
\proofc of Theorem \ref{theo300}.\quad
  Part i) can be obtained by a straightforward combination of Lemma \ref{lem:rapid_Eleq_preparation}  with
  Lemma \ref{lem:rapid_lowerest} and a comparison argument (\cite{wiegner}), whereas part ii) similarly results from
  Lemma \ref{lem:rapid_Egeq_preparation} and Lemma \ref{lem:rapiddecr_l1}.
\qed
\subsection{Examples}
\subsubsection{Exponentially decaying data. Proof of Corollary \ref{cor:firstexample}}
%\alpha\in(0,1) brauchen wir zum Anwenden von Thm \ref{rapid}, c_0>2^\alpha dann, um diese Funktion \ell zu nutzen. Vielleicht ließe die sich anpassen (und ein M<1 sich nutzen), aber selbst dann gäbe es noch eine Bedingung an $c_0$: $c_0^\alpha=:M$, s.u.
%
%
%
%
%
%
%
\begin{proof}[\proofc of Corollary \ref{cor:firstexample}]
Given $\kappa>0$ and $M\geq 2$ we let 
\[
 \ell(\xi)=\begin{cases}0,&\xi=0,\\\ln^{-\kappa}\left(\frac M\xi\right),& \xi\in(0,\frac M2),\\\ln^{-\kappa}(2),&\xi\geq \frac M2.\end{cases}
\]
This function obviously satisfies \eqref{ell:zero_pos_nondec} and is continuous as well as strictly increasing on $(0,\xi_1)$ for $\xi_1=\frac M2$. That it moreover fulfills \eqref{ell3} and \eqref{ell33} has been shown in \cite[Lemma 3.9]{fast_growth2} for $\xi_2=\frac M2$, arbitrary $\lambda_0>0$ and $a=\kappa$ if $\kappa<1$ and $a=\frac{(1+\lambda_0)^{\kappa}-1}{\lambda_0}$ otherwise. This also entails \eqref{ell:0limit}, see \cite[Lemma 2.1]{fast_growth2}. %Remark \ref{rem:ell:0limit}).
% Moreover, \eqref{ell:0limit} holds, because due to $\ell'(s)=-\kappa \ln^{-\kappa-1}(\frac Ms)\frac{s}M M(-\frac1{s^2}=\frac1{s\ln^{\kappa+1}(\frac Ms)}$
% \[
%  \frac{s\ell'(s)}{\ell(s)} = \frac{\frac{s}s\ln^\kappa(\frac Ms)}{\ln^{\kappa+1}(\frac Ms)}=\frac1{\ln \frac Ms}\to 0 \qquad \mbox{as }s\searrow 0
% \]
%and 
Finally, there exists $\xi_0>0$ such that $\xi_0\leq \xi\mapsto \ell^{\frac{n+2}n}(\frac1\xi)=\xi \ln^{-\kappa \frac{n+2}n}(M\xi)$, i.e. \eqref{ell:monLstrich}.
%(Because $\frac{s}{\ln^\beta Ms}$ ($\beta=\kappa\frac{n+2}n$) has derivative $\frac{\ln^\beta Ms-\frac\beta M\ln^{\beta-1}(Ms)}{\ln^{2\beta} Ms}$, (where btw $\ln Ms>0$ for ``large'' $s$), which is positive, whenever $\ln Ms>\frac\beta M$) -- $\xi_0:=\frac1M exp(\frac\beta M)$

Furthermore, 
\[
 \int_1^\infty \frac1{\xi } \ell^{-\frac{n+2}n}\left(\frac1{\xi }\right) d\xi  =\int_1^\infty\frac1{\xi }\ln^{\kappa\frac{n+2}n}(M\xi )d\xi  =\infty%=\int_0^{\frac2M} \frac1s \ln^{\kappa\frac{n+2}n} 2 \leq\int_0^1 \frac1s \ell^{-\frac{n+2}n}(\frac1s) ds
\]
and thus \eqref{ell:intcond} holds.

The function $\ell^{-1}\colon [0,\ln^{-\kappa}2)\to [0,\frac M2)$ is given by $\ell^{-1}(\xi )=Me^{-\xi ^{-\frac1\kappa}}$ for $\xi\in(0,\ln^{-\kappa}2)$. 
We compute $\calL(t)=\int_1^t \frac1{\xi }\ell^{-\frac{n+2}n}(\frac1{\xi }) d\xi $ for $t>1$, so that we have 
\[
 \calL(t)=\int_1^t \frac1{\xi }\ln^{\kappa\frac{n+2}n}\left(M{\xi }\right) d\xi  = \int_{\ln M}^{\ln Mt} y^{\kappa\frac{n+2}n} dy = \frac1{1+\kappa\frac{n+2}n} \left[\ln^{1+\kappa\frac{n+2}n} Mt -\ln ^{1+\kappa\frac{n+2}n} M\right]
\]
%For $t\leq \frac2M$, on the other hand, we obtain 
% \[
%  \calL(t)=-\int_t^{\frac2M}\frac1{\xi } \ln^{\kappa\frac{n+2}n}(2) d\xi  - \int_{\frac2M}^1\frac1{\xi }\ln^{\kappa\frac{n+2}n}(M\xi )d\xi =-\ln^{\kappa\frac{n+2}n}2\ln\frac{2}{Mt}+\frac{1}{1+\kappa\frac{n+2}n}[\ln^{1+\kappa\frac{n+2}n}2-\ln^{1+\kappa\frac{n+2}n}M].
% \]
if $t>1\geq\frac2M$. In short, 
\[
 \calL(t)=c_1\ln^\gamma Mt-c_2,\qquad\text{for all } t>1%\geq \frac2M,%\\c_3+c_4\ln t,&t\leq\frac2M,\end{cases}
\]
where 
\begin{equation}\label{def:gamma}
\gamma:=1+\kappa\frac{n+2}n,\end{equation}
and $c_1:=\frac1\gamma$, and $c_2:=\frac1\gamma \ln^\gamma M$. %, $c_3=\frac1\gamma[\ln^\gamma2-\ln^\gamma M]-(\ln^{\kappa\frac{n+2}n}2)\cdot\ln\frac2M$, $c_4=\ln^{\kappa\frac{n+2}n}2$.
Accordingly, we have 
\[
 \calL^{-1}(t)=\frac1M \exp\left(\left(\frac{t+c_2}{c_1}\right)^\frac1\gamma\right)\qquad\text{for all } t\geq c_1\ln^\gamma M-c_2,%\\ exp(\frac{t-c_3}{c_4}),& t\leq c_3+c_4\ln\frac2M\end{cases}
\]
% c_1\ln^\gamma2-c_2=c_3+c_4\ln 2, 
% \iff
% \frac1\gamma \ln^\gamma 2-\frac1\gamma \ln^\gamma M = \frac1\gamma(\ln^\gamma 2-\ln^\gamma M)-\ln^{\kappa\frac{n+2}n} 2 \cdot \ln \frac2M + \ln^{\kappa\frac{n+2}n}2\cdot \ln\frac2M 
%\iff true
and
\[
 (\calL^{-1})'(t)=\frac1M \exp\left(\frac{t+c_2}{c_1}\right)^{\frac1\gamma} \frac1\gamma \left(\frac{t+c_2}{c_1}\right)^{\frac1\gamma-1} \frac1{c_1}\qquad \text{for all } t>c_1\ln^{\gamma} M-c_2 
%\\  \frac1{c_4} \exp(\frac{t-c_3}{c_4}),&t<c_3+c_4\ln\frac2M,\end{cases}
\]
that is 
\[
 \ln\left((\calL^{-1})'(t)\right)=\left(\frac{t+c_2}{c_1}\right)^{\frac1\gamma} + \left(\frac1\gamma -1\right)\ln \left(\frac{t+c_2}{c_1}\right) -\ln M %note: \gamma c_1=1 
\]
if $t>c_1\ln^{\gamma} M-c_2$, and thus there are positive constants $c_5$, $c_6$, and $t_0>0$ such that 
\begin{equation}\label{eq:estlncalLinv}
 c_5 t^{\frac1\gamma}\leq \ln (\calL^{-1})'(t)\leq c_6 t^{\frac1\gamma} \qquad \mbox{ for all } t>t_0.
\end{equation}
Having derived properties of $\ell$ for general parameters, we can now turn our attention to the proof of part \ref{cor:firstexample:leq} of Corollary \ref{cor:firstexample}. Namely, fixing $\Rst>0$, $\beta>0$, $c_0>0$ and $\alpha\in(0,1)$ such that $u_0(x)\geq c_0e^{-\alpha|x|^\beta}$ for all $x\in \R^n\setminus B_{\Rst}$, we take any $q\in(\alpha,1)$ and $M\geq 2$ and let $\kappa:=\frac{n}{\beta}>0$. 
By possibly enlarging $\Rst$ we ensure that $c_0M^{-q}\geq e^{-(q-\alpha)|x|^\beta}$ for all $x\in\R^n\setminus B_{\Rst}$, so that 
\[
 u_0(x)\geq c_0e^{-\alpha|x|^{-\beta}}\geq M^q e^{-q|x|^\beta}=\left[\ell^{-1}\left(\frac1{|x|^n}\right)\right]^q \quad \text{for all } x\in \R^n\setminus B_{\Rst}.
\]
In light of the attributes of $\ell$ previously asserted, from Theorem \ref{theo300} and \eqref{eq:estlncalLinv} we  obtain $t_0>0$, $C_1>0$, and $C_2\ge 0$ such that 
$ \E(t)\leq C_1t^{\frac1\gamma}+C_2$ for $t>t_0$, which in turn yields
\[
 \E(t)\leq C t^{\frac{1}{1+\frac{n+2}\beta}} \qquad \text{for } t>t_0, 
\]
if we set $C:=C_1+C_2t_0^{-\frac1\gamma}$ and take into account \eqref{def:gamma}. 

In order to prove Corollary \ref{cor:firstexample} \ref{cor:firstexample:geq}, we fix $\alpha>0$, $\beta>0$ and $C_0>0$ such that $u_0(x)\leq C_0e^{-\alpha|x|^\beta}$ for all $x\in \R^n$, and pick $\eps>0$. We choose $M>\max\set{2C_0,2}$ and let $\kappa:=\frac{n}{\beta}+\frac{n\eps}{n+2}$. These choices entail that, since $\xi_2=\frac M2>1$, for any $q_0\in (0,1)$ we have  
\[
 u_0(x)\leq \uo_0(x):=C_0e^{-\alpha|x|^\beta}\leq C_0\leq \frac M2=\xi_2<\xi_2^{\frac2{1+q_0}}<\xi_2^2 \qquad \text{for all } x\in\R^n, 
\]
and that $\kappa\beta>n$, so that 
\[
  \irn \ell(\uo(x))dx= \irn \ell\left(C_0e^{-\alpha |x|^\beta}\right)dx =\irn \ln^{-\kappa}\left(\frac{M}{C_0} e^{\alpha |x|^\beta}\right)dx =\irn \frac{1}{\left(\ln\frac M{C_0}+\alpha |x|^\beta\right)^{\kappa}}dx<\infty.
\]
Furthermore, according to \eqref{def:gamma}, $\gamma=1+\frac{n+2}{\beta}+\eps$. Consequently, Theorem \ref{theo300} becomes applicable and due to \eqref{eq:estlncalLinv} directly results in Corollary \ref{cor:firstexample} \ref{cor:firstexample:geq}.
% We have that $c_0e^{-\alpha|x|^\beta}\geq[\ell^{-1}(\frac{1}{|x|^n})]^q=M^q e^{-q|x|^{\frac n\kappa}}$ for suitable \green{$q$, $M$, and $\kappa$} if $\alpha=:q\in(0,1)$, $\frac{n}{\beta}=:\kappa>0$ and $c_0^{\frac1\alpha}=:M\geq2$. In this case, Theorem \ref{rapid} \ref{r_Eleq} becomes applicable, producing Corollary \ref{cor:firstexample} \ref{cor:firstexample:leq}.
% 
% If, given $c_0,\alpha,\beta>0$, we choose $M>2c_0$, then $c_0e^{-\alpha|x|^\beta}\leq c_0<\frac M2<\frac{M^2}4=\xi _2^2$ (and $c_0e^{-\alpha|x|^\beta}<\xi _2^{\frac2{1+q_0}}$ for some small $q_0>0$). Moreover, 
% \[
%  \irn \ell\left(c_0e^{-\alpha |x|^\beta}\right)=\irn \ln^{-\kappa}\left(\frac{M}{c_0} e^{\alpha |x|^\beta}\right) =\irn \frac{1}{\left(\ln\frac M{c_0}+\alpha |x|^\beta\right)^{\kappa}},
% \]
% which is finite whenever $\kappa\beta>n$, that is, when $\gamma=1+\kappa\frac{n+2}n>1+\frac{n+2}\beta$. 
% 
% Application of Theorem \ref{rapid} \ref{r_Egeq} therefore yields Corollary \ref{cor:firstexample} \ref{cor:firstexample:geq}.
\end{proof}
\subsubsection{Doubly exponentially decaying data. Proof of Corollary \ref{cor:secondexample}}
%
%
%
%
%
%
%
%
%We continue with a second family of functions, which looks similar and sheds light on the behavior of solutions emanating from initial data which decay like $e^{-e^{|x|^\beta}}$.\\
\begin{proof}[\proofc of Corollary \ref{cor:secondexample}]
Fixing $\beta>0$, $\alpha>0$, $C_0>0$ and $\eps>0$ as in the assumptions of Corollary \ref{cor:secondexample}, we first pick $\betatilde<\beta$ such that $\frac{n+2}{\betatilde}<\frac{n+2}{\beta}+\eps$ and choose $\alphatilde>1$ and $\Ctilde_0>C_0$ such that $C_0e^{-\alpha|x|^\beta}\leq \Ctilde_0 e^{-\alphatilde|x|^{\betatilde}}$ for all $x\in\R^n$. %e.g. let $\Ctilde_0:=c_0\exp(\sup_{z>0} \alphatilde z^\betatilde-\alpha z^\beta)$, 
Moreover, we define $\gamma:= \frac{n+2}{\beta}+\eps$, let %Fixing $\beta>0$, $\alpha>1$, $C_0>0$ and $\gamma>\frac{n+2}\beta$ as in the assumptions of Corollary \ref{cor:secondexample}, we let 
$M>\max\set{\Ctilde_0ee^{-\alphatilde},e,\Ctilde_0}$ and choose $\xi_2\in\left[\max\set{\Ctilde_0e^{-\alphatilde},1},\frac Me\right)$. Then 
\[
 u_0(x)\leq \uo_0(x):=\Ctilde_0e^{-\alphatilde e^{|x|^{\betatilde}}}\leq \Ctilde_0e^{-\alphatilde}<\xi_2<\xi_2^{\frac{2}{1+q_0}}<\xi_2^2 \qquad \text{for all } x\in \R^n
\]
for arbitrary $q_0\in(0,1)$. Moreover, we let $\kappa:=\frac{n\gamma}{n+2}$ and define 
%Given $\kappa>0$ and $M>e$, $\xi _2\in[1,\frac Me)$, we let 
\[
 \ell(\xi)=\begin{cases}0,&\xi=0,\\\ln^{-\kappa}\ln\left(\frac M\xi\right),& \xi\in(0,\xi_2),\\\ln^{-\kappa}\ln\left(\frac M {\xi_2}\right),&\xi\geq \xi_2.\end{cases}
\]
Then 
\[
 \irn \ell(\uo_0(x))dx=\irn \ln^{-\kappa}\ln \left(\frac{M}{\Ctilde_0} e^{\alphatilde e^{|x|^{\betatilde}}}\right)dx\leq \irn \frac{1}{\left(\ln \alphatilde + |x|^{\betatilde}\right)^{\kappa}}dx 
\]
is finite, because $\kappa\betatilde=\frac{\betatilde \gamma}{n+2}\cdot n > n$. 
Furthermore, \eqref{ell:zero_pos_nondec} and $\ell\in C^0([0,\infty))$ are apparently satisfied and $\ell$ is strictly increasing on $(0,\xi_2)$. In addition, \eqref{ell3} and \eqref{ell33} have been shown in \cite[Lemma 3.11]{fast_growth2}. By 
%Remark \ref{rem:ell:0limit}
\cite[Lemma 2.1]{fast_growth2}, this also implies \eqref{ell:0limit}.
% On $(0,s_2)$ we have $\ell'(s)=\frac{\kappa}{s\ln\frac Ms} \ln^{-\kappa-1}\frac Ms$ and hence \eqref{ell:0limit} is fulfilled according to 
% \[
%  \lim_{s\searrow0} \frac{s\ell'(s)}{\ell(s)}=\lim_{s\searrow 0} \frac{\ln^\kappa\ln\frac Ms}{\ln^{\kappa+1}\ln\frac Ms\ln\frac Ms}=\lim_{s\searrow0}\frac1{\ln\ln\frac Ms \ln \frac Ms}=\lim_{\sigma\nearrow\infty}\frac1{\sigma\ln\sigma}=0.
% \]
Moreover, \eqref{ell:monLstrich} holds, because $\xi\mapsto \xi\ln^{-\kappa\frac{n+2}n}\ln M\xi$ is nondecreasing for large values of $\xi$ (as can easily be seen by checking the sign of the derivative $\ln^{-\kappa\frac{n+2}n-1}\ln M\xi \cdot(\ln\ln M\xi-\frac{\kappa(n+2)}{n\ln M\xi})$). Also \eqref{ell:intcond} holds true, because 
% \[
%  \int_0^1\frac1s\ell^{-\frac{n+2}n}(\frac1s) ds \geq \int_0^{\frac1{s_2}} \frac1s \ln^{\kappa\frac{n+2}n} \ln \frac M{s_2}=\infty
% \]
% and
\[
 \int_1^\infty\frac1\xi\ell^{-\frac{n+2}n}\left(\frac1\xi\right) d\xi =\int_1^\infty \frac1\xi\ln^{\kappa\frac{n+2}n} \ln M\xi d\xi =\int_{\ln M}^\infty \ln^{\kappa\frac{n+2}n} y dy=\infty.
\]
The inverse $\ell^{-1}\colon [0,\ln^{-\kappa}\ln \frac M{\xi_2})\to [0,\xi_2)$ is given by $\ell^{-1}(\xi)=Me^{-e^{\xi^{-\frac1\kappa}}}$ for $\xi\in\left(0,\ln^{-\kappa}\ln \frac M{\xi_2}\right)$.\\
In this case it is more difficult to give an explicit expression for $\calL^{-1}$ than before. We provide a workaround in the following: 
Since $\gamma=\kappa\frac{n+2}n$, 
% \red{Nachdem die Aussage wieder zur H\"alfte \"uberfl\"ussig geworden ist, lohnt es sich kaum noch, sie als eigene Nummer hervorzuheben. Sollen wir auf gesonderte S\"atze, Lemmata u.\"a. in diesem Abschnitt, der ja insgesamt ein Beweis ist, verzichten?\\
% Oder sollten wir gar mehr in Lemmata verpacken?}
% \begin{lemmation}
%  Letting $f(t)=\ln\ln\calL^{-1}(t)$ and $\gamma=\kappa\frac{n+2}n$, we have that there are $t_0>0$, $c_1>0$ such that 
% \[
%  c_1 t\leq (f(t))^\gamma e^{f(t)} \qquad \mbox{for all }t>t_0.
% \]
% \end{lemmation}
% %
% %
% \proof
%  T
the function $\calL$ is given by 
\[
 \calL(t)=\int_{\ln M}^{\ln M+\ln t} \ln^\gamma y dy \qquad \text{for all } t>1. 
\]
% where the integrand $\ln^\gamma$ is concave on $(y_0,\infty)$ with $y_0=e^{\gamma-1}$. We use this concavity to estimate $\calL$:
% \begin{align*}
%  \calL(t)\geq&\int_{y_0}^{\ln M+\ln t} \ln^\gamma y dy +\int_{\ln M}^{y_0} \ln^\gamma ydy \geq(\ln t+\ln M-y_0) \frac12 (\ln^\gamma y_0 + \ln^\gamma(\ln M+\ln t))+c_0\\
%  \geq&\frac14\ln t \ln^\gamma(\ln M+\ln t)+c_0 \geq \frac15 \ln t \ln^\gamma(\ln t),
% \end{align*}
% as long as $\frac13\ln t>y_0-\ln M$ and $\frac14\ln^\gamma(\ln M+\ln t)>-\ln^\gamma y_0$ and $\frac15\ln t\ln^\gamma\ln t\geq -c_0$, and where we have set $c_0=\int_{\ln M}^{y_0} \ln^\gamma ydy$. 
We employ positivity and monotonicity of $\ln^\gamma$, and thereby may infer
\[
 \calL(t)\leq \int_1^{2\ln t} \ln^\gamma ydy\leq 2\ln t\ln^\gamma(2\ln t)\leq 2\ln t (\ln 2+\ln\ln t)^\gamma \leq 2\ln t (2\ln\ln t)^\gamma \leq 2^{1+\gamma}\ln t\ln^\gamma\ln t, 
\]
as long as $\ln t>\ln M$ and $\ln\ln t\geq \ln 2$. Inserting $\calL^{-1}(t)$ istead of $t$ and using $\calL(\calL^{-1}(t))=t$, we obtain that 
\[
 %\frac15\ln\calL^{-1}(t)\ln^\gamma\ln\calL^{-1}(t)\leq t \quad \mbox{and}\quad 
 t\leq 2^{1+\gamma}\ln\calL^{-1}(t)\ln^\gamma\ln\calL^{-1}(t)
\]
for sufficiently large $t$ and therefore 
\begin{equation}\label{eq:war mal ein lemmalein}
 %e^{f(t)}(f(t))^\gamma\leq 5t \quad \mbox{and}\quad
 e^{f(t)}(f(t))^\gamma \geq 2^{-1-\gamma} t \quad \mbox{for all } t>t_0, 
\end{equation}
with some $t_0>0$ and if we abbreviate $f(t):=\ln\ln\calL^{-1}(t)$. 
%\qed

We recall that the Lambert W function $W$ (see \cite{lambertfct}) is defined to be the inverse of $0\leq x\mapsto xe^x$. An inverse of $0\leq x\mapsto x^\gamma e^x$ is given by $0\leq x\mapsto \gamma W(\frac1\gamma x^{\frac1\gamma})$ and we see from \eqref{eq:war mal ein lemmalein} that, given any $c>0$, there are $t_0>0$ and $c_1>0$ such that 
\begin{equation}\label{eq:fgeqW}
 f(ct)\geq \gamma W\left(\frac1\gamma(c_1 t)^{\frac1\gamma}\right) \qquad \mbox{for all } t>t_0.
\end{equation}

By taking the logarithm on both sides of the defining relation for $W$, namely in $W(z)e^{W(z)}=z$ for $z>0$, we obtain 
\begin{equation}\label{eq:logLambert}
 W(z)+\ln W(z)=\ln z \qquad \text{for } z>0
\end{equation}
and since $W(z)>1$ for $z>e$, this entails $W(z)<\ln z$ for $z>e$. We can use this estimate in \eqref{eq:logLambert} to infer that 
\[
 W(z)=\ln z -\ln W(z)  > \ln z -\ln\ln z \qquad \mbox{for all } z>e.
\]
Employing this together with \eqref{eq:fgeqW}, we can find positive constants $c_2$, $c_3$, $c_4$, and $t_1>0$, such that 
\begin{align*}
 f(ct)\geq& \gamma W\left(\left(c_2t\right)^{\frac1\gamma}\right)\geq \gamma \ln\left(\left(c_2 t\right)^{\frac1\gamma}\right)-\gamma \ln\left(\ln\left(\left(c_2t\right)^{\frac1\gamma}\right)\right)\\
 =&\ln\left(c_2 t \ln^{-\gamma}\left(\left(c_2t\right)^{\frac1\gamma}\right)\right)=\ln\left(c_3t\ln^{-\gamma}\left(c_2 t\right)\right)\\
 =&\ln\left(c_3t\left(\ln c_2+\ln t\right)^{-\gamma}\right)\geq \ln \left(c_4t\ln^{-\gamma}t\right) \qquad \mbox{ for all } t>t_1.
\end{align*}
Accordingly, 
\begin{equation}\label{eq:Linversgeq}
 \calL^{-1}(ct)=e^{e^{f(ct)}}\geq e^{e^{\ln\left(c_4t\ln^{-\gamma} t\right)}}=e^{c_4t\ln^{-\gamma}t} \qquad \mbox{ for } t>t_1.
\end{equation}
Furthermore, there is $\xi_0>0$ such that 
\begin{equation}\label{eq:Lstrichestimate}
 \calL'(\xi)=\frac1\xi\ln^{\kappa\frac{n+2}n}\ln M\xi\leq \frac1{\sqrt \xi} \qquad \mbox{for all } \xi>\xi_0.
\end{equation}
If we combine \eqref{eq:Linversgeq} and \eqref{eq:Lstrichestimate} with the monotonicity of $\frac1{\calL'}$ guaranteed by \eqref{ell:monLstrich}, we can find $t_2>0$ such that 
\[
 (\calL^{-1})'(ct)=\frac{1}{\calL'(\calL^{-1}(t))}\geq \frac1{\calL'(e^{c_4 t\ln^{-\gamma}t})}\geq e^{\frac{c_4}2 t\ln^{-\gamma} t} \qquad \mbox{for all }t>t_2.
\]
In conclusion, together with Theorem \ref{rapid} \ref{r_Egeq} this proves Corollary \ref{cor:secondexample}.
% , if we let $M>C_0ee^{-\alpha}$ and choose $\xi _2\in[1,\frac Me)$ such that 
% \[
%  \sup_{x\in\R^n} u_0(x) \leq C_0e^{-\alpha e^{|x|^\gamma}}\leq C_0e^{-\alpha}<\xi _2<\xi _2^{\frac{2}{1+q_0}}<\xi _2^2 
% \]
% for some $q_0\in(0,1)$. Our definition of $\kappa$ and the condition on $\gamma$ ensure that $\kappa\gamma=\gamma^2\frac{n}{n+2}>1$ and thus $\irn \frac1{1+(|x|^\gamma)^\kappa}$ is finite and hence so is $\irn \ell\circ u_0$.
\end{proof}

% given any $\eta>0$ there is $t_1>0$ such that 
% \[
%  (1-\eta)\beta \ln(\frac1\beta(c_1t)^{\frac1\beta})\leq \beta W(\frac1\beta(c_1 t)^{\frac1\beta})\leq f(t)\leq\beta W(\frac1\beta(c_2 t)^{\frac1\beta})\leq(1+\eta)\beta\ln(\frac1\beta(c_2t)^{\frac1\beta})
% \]
% whenever $t>t_1$. Accordingly, for any $\eta>0$ there are $c_3>0$ and $c_4>0$ such that 
% \[
%  \calL^{-1}(t)=e^{e^{f(t)}}\geq e^{c_3 t^{1-\eta}} \quad \mbox{and}\quad \calL^{-1}(t)\leq e^{c_4 t^{1+\eta}}
% \]
% for sufficiently large $t$.
% 
% The right hand sides of the estimates for $\E$ in Lemma \ref{lem:rapid_Egeq} and Lemma \ref{lem:rapid_Eleq} therefore turn into 
% \[
%  \ln\frac1{\calL'(\calL^{-1}(ct)} \geq\ln\frac1{\calL'(e^{c_5 t^{1-\eta}})} = \ln \left(e^{c_5 t^{1-\eta}} \ln^{-\kappa\frac{n+2}n} \ln Me^{c_5 t^{1-\eta}}\right) = c_5 t^{1-\eta} -\frac{\kappa(n+2)}n \ln\ln\ln Me^{c_5t^{1-\eta}}
% \]
% or 
% \[
%  \ln\frac1{\calL'(\calL^{-1}(ct)}\leq c_6t^{1+\eta}-\frac{\kappa(n+2)}n \ln\ln\ln Me^{c_6t^{1+\eta}},
% \]
% respectively, for sufficiently large values of $t$ and with some $c_5, c_6>0$ (depending on $\eta$).

%
%
%
%
%
%
%
%
%
%
%
\end{document}